\numberwithin{equation}{section}       
\numberwithin{figure}{section}       
\theoremstyle{plain}
\newtheorem{theo}{Theorem}
\newtheorem{prop}{Proposition}[section]
\newtheorem{coro}[prop]{Corollary}
\newtheorem{lemm}[prop]{Lemma}
\newtheorem{theoalph}{Theorem}
\theoremstyle{definition}
\newtheorem{defi}[prop]{Definition}
\theoremstyle{remark}
\newtheorem{rema}[prop]{Remark}
\newtheoremstyle{citing}
  {3pt}
  {3pt}
  {\itshape}
  {}
  {\bfseries}
  {.}
  {.5em}
  {\thmnote{#3}}
\theoremstyle{citing}
\newcommand{\partn}[1]{{\smallskip \noindent \textbf{#1.}}}
\newcommand{\C}{\mathbb{C}}
\newcommand{\R}{\mathbb{R}}
\newcommand{\Z}{\mathbb{Z}}
\newcommand{\cL}{\mathcal{L}}
\newcommand{\cM}{\mathcal{M}}
\newcommand{\fD}{\mathfrak{D}}
\newcommand{\sA}{\mathscr{A}}
\newcommand{\sD}{\mathscr{D}}
\newcommand{\sE}{\mathscr{E}}
\newcommand{\sL}{\mathscr{L}}
\newcommand{\sN}{\mathscr{N}}
\newcommand{\sP}{\mathscr{P}}
\newcommand{\sQ}{\mathscr{Q}}
\newcommand{\hcL}{{\widehat{\mathcal{L}}}}
\newcommand{\hpsi}{\widehat{\psi}}
\newcommand{\talpha}{\widetilde{\alpha}}
\newcommand{\teta}{\widetilde{\teta}}
\newcommand{\ttheta}{\widetilde{\theta}}
\newcommand{\tphi}{\widetilde{\phi}}
\newcommand{\tvarphi}{\widetilde{\varphi}}
\newcommand{\tchi}{\widetilde{\chi}}
\renewcommand{\=}{ : = }
\DeclareMathOperator{\dist}{dist}
\DeclareMathOperator{\Crit}{Crit} 
\DeclareMathOperator{\Tur}{Tur}
\newcommand{\htop}{h_{\operatorname{top}}}
\DeclareMathOperator{\osc}{osc}
\DeclareMathOperator{\esssup}{ess-sup}
\DeclareMathOperator{\essinf}{ess-inf}
\DeclareMathOperator{\BV}{BV}
\DeclareMathOperator{\Id}{Id}
\DeclareMathOperator{\Var}{Var}
\DeclareMathOperator{\Espace}{E}
\DeclareMathOperator{\Hspace}{H}
\DeclareMathOperator{\Lspace}{L}
\newcommand{\eps}{\varepsilon}
\newcommand{\1}{\pmb{1}}
\begin{document}

\title[Equilibrium states of interval maps]{Equilibrium states of interval maps for hyperbolic potentials}

\author{Huaibin Li}
\address{Huaibin Li, 
Facultad de Matem{\'a}ticas, Pontificia Universidad Cat{\'o}lica de Chile, Avenida Vicu{\~n}a Mackenna~4860, Santiago, Chile}

\email{matlihb@gmail.com}
\thanks{HL was partially supported by the National Natural Science
  Foundation of China (Grant No. 11101124) and FONDECYT grant 3110060
  of Chile. Current address: School of Mathematics and Information Science, Henan University, Kaifeng 475004, China}

\author{Juan Rivera-Letelier}
\thanks{JRL was partially supported by FONDECYT grant 1100922 of Chile.}
\address{Juan Rivera-Letelier, Facultad de Matem{\'a}ticas, Pontificia Universidad Cat{\'o}lica de Chile, Avenida Vicu{\~n}a Mackenna~4860, Santiago, Chile}
\email{riveraletelier@mat.puc.cl}
\urladdr{\url{http://rivera-letelier.org/}}


\begin{abstract}
We study the thermodynamic formalism of sufficiently regular interval maps for H{\"o}lder continuous potentials.
We show that for a hyperbolic potential there is a unique equilibrium state, and that this measure is exponentially mixing.
Moreover, we show the absence of phase transitions: The pressure function is real analytic at such a potential.
\end{abstract}

\maketitle

\section{Introduction}
In this paper we study the thermodynamic formalism of sufficiently regular interval maps for H{\"o}lder continuous potentials.
The case of a piecewise monotone interval map~$f : I \to I$, and a potential~$\varphi : I \to \R$ of bounded variation satisfying
$$ \sup_I \varphi < P(f, \varphi), $$
where~$P(f, \varphi)$ denotes the pressure, is very well understood.
Most results apply under the following weaker condition:
\begin{gather}
\notag
\text{For some integer~$n \ge 1$, the function~$S_n(\varphi) \=
  \sum_{j = 0}^{n - 1} \varphi \circ f^j$ satisfies}
\\
\label{e:hyperbolic potential}
\sup_{I} \frac{1}{n} S_n(\varphi) < P(f, \varphi).
\end{gather}
In what follows, a potential~$\varphi$ satisfying this condition is said to be \emph{hyperbolic for~$f$}.
See for example~\cite{BalKel90,DenKelUrb90,HofKel82b,Kel85,LivSauVai98,Rue94a} and references therein, as well as Baladi's book~\cite[\S$3$]{Bal00b}.
The classical result of Lasota and Yorke~\cite{LasYor73} corresponds to the special case where~$f$ is piecewise~$C^2$ and uniformly expanding, and~$\varphi = - \log |Df|$.

For a complex rational map in one variable~$f$, and a H{\"o}lder continuous potential~$\varphi$ that is hyperbolic for~$f$, a complete description of the thermodynamic formalism was given by Denker, Haydn, Przytycki, and Urba{\'n}ski in~\cite{Hay99,DenPrzUrb96,DenUrb91e,Prz90},\footnote{In this setting, most of the results have been stated for a potential~$\varphi$ satisfying the condition $\sup \varphi < P(f, \varphi)$ that is more restrictive than~$\varphi$ being hyperbolic for~$f$.
General arguments show they also apply to hyperbolic potentials, see~\cite[\S$3$]{InoRiv12}.} extending previous results of Freire, Lopes, and Ma{\~n}{\'e}~\cite{FreLopMan83,Man83}, and Ljubich~\cite{Lju83}.
See also the alternative approach of Szostakiewicz, Urba{\'n}ski, and Zdunik in~\cite{SzoUrbZdu11one}.

In this paper we extend these results to the case of a sufficiently
regular interval map and a H{\"o}lder continuous potential, with the purpose of applying them in the companion paper~\cite{LiRiv14}.
We obtain our main results by constructing a conformal measure with the Patterson-Sullivan method, and then using a result of Keller in~\cite{Kel85}.
This approach is more efficient than the inducing scheme approach
of~\cite{BruTod08}, as it does not rely on any bounded distortion
hypothesis, and it applies to a larger class of maps, including maps
with flat critical points.

We now proceed to describe our main results more precisely.
The class of maps we consider is introduced in~\S\ref{ss:interval maps}, and in~\S\ref{ss:keller spaces} we recall the definition of the function spaces defined by Keller in~\cite{Kel85}.
Our main results are stated in~\S\ref{ss:statements of results}.
\subsection{Interval maps}
\label{ss:interval maps}
Let~$I$ be a compact interval in~$\R$.
A continuous map $f:I\to I$ is \emph{multimodal} if it is not injective, and if there is a finite partition of~$I$ into intervals on each of which $f$ is injective. 

\begin{defi}
Let~$f : I \to I$ be a multimodal map.
The \emph{Julia set~$J(f)$ of~$f$} is the complement of the largest open subset of~$I$ on which the family of iterates of~$f$ is normal.
\end{defi}

In contrast with the complex setting, the Julia set of a multimodal map might be empty, reduced to a single point, or might not be completely invariant.\footnote{This last property can only happen if there is a turning point in the interior of the basin of a one-sided attracting neutral periodic point, that is eventually mapped to this neutral periodic point.}
However, if the Julia set of such a map~$f$ is not completely invariant, then it is possible to make an arbitrarily small smooth perturbation of~$f$ outside a neighborhood of~$J(f)$, so that the Julia set of the perturbed map is completely invariant and coincides with~$J(f)$.
We note also that if~$f$ is differentiable and has no neutral periodic point, then~$J(f)$ is the complement of the basins of periodic attractors.
For background on the theory of Julia sets, see for example~\cite{dMevSt93}.

Given a differentiable map~$f : I \to I$, a point of~$I$ is
\emph{critical for~$f$} if the derivative of~$f$ vanishes at it.
We denote by~$\Crit(f)$ the set of critical points of~$f$.

In what follows we denote by~$\sA$ the collection of all those
differentiable multimodal maps~$f$ such that:
\begin{itemize}
\item
$Df$ is H{\"o}lder continuous;
\item
$\Crit(f)$ is finite;
\item
$J(f)$ contains at least~$2$ points and is completely invariant.
\end{itemize}

\subsection{Keller spaces}
\label{ss:keller spaces}
In this subsection, let~$X$ be  a compact subset of~$\R$, and let~$m$
be an atom-free Borel probability measure on~$X$.
We consider the equivalence relation on the space of complex valued functions defined on~$X$, defined by agreement on a set of full measure with respect to~$m$.
Denote by~$d$ the pseudo-distance on~$X$ defined by
$$ d(x,y)
\=
m(\{z\in X: x\le z\le y \text{   or   } y\le z\le x\}). $$
Note that for all~$x$ in~$X$ and~$\eps>0$, the set
$$ B_d(x,\eps)
\=
\{y\in X: d(x,y)<\eps\} $$
has strictly positive measure with respect to~$m$.

Given a function $h:X \to \C$ and $\eps>0$, for each~$x$ in~$X$ put
$$ \osc(h,\eps, x)
\=
\esssup \left\{ |h(y)-h(y')|: y, y'\in B_d(x,\eps) \right\} $$
and
$$ \osc_1(h,\eps)
\=
\int_{X}\osc(h,\eps,x) \ d m(x). $$
Fix $A>0$, and for each $\alpha$ in $(0,1]$ and each $h:X \to \C$, put $$|h|_{\alpha,1}\=\sup_{\eps\in (0,A]}\frac{\osc_1(h,\eps)}{\eps^{\alpha}} \text{ and } \|h\|_{\alpha,1}\=\|h\|_1+|h|_{\alpha,1}. $$

Note that $|h|_{\alpha,1}$ and $\|h\|_{\alpha,1}$ only depend on the equivalence class of $h$.
Let~$\Hspace^{\alpha,1}(m)$ be the space of equivalence classes of functions~$h:X \to \C$ such that $\|h\|_{\alpha,1}<+\infty$.
Note that $|\cdot|_{\alpha,1}$ and $\|\cdot\|_{\alpha,1}$ induce a semi-norm and a norm on~$\Hspace^{\alpha, 1}(m)$, respectively; by abuse of notation we denote these functions also by $|\cdot|_{\alpha,1}$ and $\|\cdot\|_{\alpha,1}$.
Keller shows in~\cite{Kel85} that~$\Hspace^{\alpha, 1}(m)$ is a Banach space with respect to~$\| \cdot \|_{\alpha, 1}$.
Some properties of these spaces are gathered in~\S\ref{ss:keller spaces properties}.

\subsection{Statement of results}
\label{ss:statements of results}
To state our main results, we recall a few concepts of
thermodynamic formalism, see for example~\cite{Kel98} or~\cite{PrzUrb10} for background.
Let~$(X, \dist)$ be a compact metric space, and let~$T : X \to X$ be a
continuous map.
Denote by~$\cM(X)$ the space of Borel probability measures on~$X$
endowed with the weak* topology, and by~$\cM(X, T)$ the subspace
of~$\cM(X)$ of those measures that are invariant by~$T$.
For each measure~$\nu$ in~$\cM(X, T)$, denote by~$h_{\nu}(T)$ the \emph{measure-theoretic entropy} of~$\nu$.
For a continuous function $\phi: X \to \R$, denote by~$P(X,\phi)$ the \emph{topological pressure of~$T$ for the potential~$\phi$}, defined by
\begin{equation}
\label{e:variational principle}
P(T, \phi)
\=
\sup\left\{h_\nu(T) + \int_X \phi \ d\nu : \nu\in \cM(X, T) \right\}.
\end{equation}
An \emph{equilibrium state of~$T$ for the potential~$\phi$} is a measure at which the supremum above is attained.

For a multimodal map~$f$ and a continuous function~$\varphi :
J(f) \to \R$, we denote~$P(f|_{J(f)}, \varphi)$ just by~$P(f,
\varphi)$.
Moreover, we say~$\varphi$ is \emph{hyperbolic for~$f$}
if~\eqref{e:hyperbolic potential} is satisfied with~$I$ replaced
by~$J(f)$ and~$f$ replaced by~$f|_{J(f)}$.

\begin{defi}
Let~$f$ be a multimodal map.
Given a Borel measurable function $g: J(f) \to [0,+\infty)$, a Borel probability measure~$\mu$ on~$J(f)$ is \emph{$g$-conformal for~$f$}, if for each Borel set~$A$ on which~$f$ is injective, we have
\begin{equation*}
\label{e:confor}
\mu(f(A))=\int_A g \ d\mu.
\end{equation*}
\end{defi}

\begin{theoalph}
\label{t:conformal measure}
Let~$f$ be an interval map in~$\sA$.
Then for every H{\"o}lder continuous potential $\varphi:J(f) \to \R$
that is hyperbolic for~$f$ there is an atom-free  $\exp(P(f,\varphi)-\varphi)$-conformal measure for $f$.
If in addition~$f$ is topologically exact on~$J(f)$, then the support of~$\mu$ is equal to~$J(f)$.
\end{theoalph}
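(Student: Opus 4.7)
The plan is to construct the conformal measure via the Patterson--Sullivan procedure, as a weak$^*$ limit of probability measures supported on preimages of a well-chosen base point, and to read atom-freeness off the hyperbolicity hypothesis. Write $P \= P(f,\varphi)$. First I would fix a point $x_0 \in J(f)$ whose backward orbit $\bigcup_{n \ge 0} f^{-n}(x_0)$ is dense in $J(f)$ and whose forward orbit is disjoint from $\Crit(f)$; such $x_0$ exists because the exceptional set of~$f$ is finite and the forward orbits of the critical points form a countable set.

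The central object is the Poincar\'e series
\[
\Sigma(s) \= \sum_{n \ge 1} e^{-sn} \sum_{y \in f^{-n}(x_0)} e^{S_n\varphi(y)}.
\]
The first key step is to show that the abscissa of convergence of $\Sigma$ equals $P$. The upper bound is the easier direction: by the variational principle applied to weak$^*$ accumulation points of the time averages of the natural weighted atomic measures on $f^{-n}(x_0)$, the inner sums grow at exponential rate at most $P$. For the lower bound I would fix $\eps>0$ and an ergodic $f$-invariant measure~$\nu$ with $h_\nu(f) + \int \varphi\,d\nu \ge P - \eps$, then use a Katok-type shadowing argument to produce, for each large~$n$, an $(n,\delta)$-separated set whose elements are shadowed by distinct preimages $y\in f^{-n}(x_0)$ with $S_n\varphi(y) \ge n(\int\varphi\,d\nu - \eps)$; counting these forces $\Sigma(s) = +\infty$ for every $s < P$.

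Once the abscissa is identified, if $\Sigma(P) = +\infty$ take $\mu$ to be any weak$^*$ accumulation point as $s \downarrow P$ of the probability measures
\[
\mu_s \= \Sigma(s)^{-1} \sum_{n \ge 1} e^{-sn} \sum_{y \in f^{-n}(x_0)} e^{S_n\varphi(y)} \delta_y;
\]
if $\Sigma(P) < +\infty$, first apply Patterson's trick by replacing $e^{-sn}$ with $a(n) e^{-sn}$ for a positive slowly varying sequence $a(n)$ chosen so that the modified series diverges at $s=P$. Conformality of $\mu$ is verified at the level of $\mu_s$: for each Borel set $A$ on which $f$ is injective, a direct bookkeeping yields $\mu_s(f(A)) = e^s \int_A e^{-\varphi}\, d\mu_s + R_s(A)$, where the remainder comes from the missing $n=0$ term and is of order $\Sigma(s)^{-1}$; passing to the limit gives $\mu(f(A)) = \int_A e^{P - \varphi}\, d\mu$.

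Atom-freeness follows from hyperbolicity: if $\mu(\{x\}) > 0$ for some $x\in J(f)$, iterating conformality along the forward orbit gives
\[
\mu(\{f^n(x)\}) = \exp\!\bigl(nP - S_n\varphi(x)\bigr)\mu(\{x\}),
\]
and \eqref{e:hyperbolic potential} (applied along the subsequence $n = k n_0$) forces the right-hand side to blow up, contradicting $\mu(J(f)) = 1$. Finally, if $f$ is topologically exact on $J(f)$, then by conformality $\supp\mu$ is closed and forward invariant, while by construction it contains the dense set $\bigcup_n f^{-n}(x_0)$, so $\supp\mu = J(f)$. The main obstacle I expect is the lower bound on the abscissa of $\Sigma$: in the complex rational setting one appeals to equidistribution of preimages against the measure of maximal entropy, but here that is unavailable in general due to flat critical points and possibly neutral periodic cycles, so an entropy-theoretic substitute through Katok's construction is needed.
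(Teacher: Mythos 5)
Your high-level strategy — Poincar\'e series, Patterson's trick, weak$^*$ limit, conformality from bookkeeping, atom-freeness from hyperbolicity — matches the paper's. But there are two substantive gaps, and you have also misidentified which half of the abscissa computation is the hard one.

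First, the abscissa of convergence. You write that the lower bound $\liminf_n \frac1n\log\sum_{y\in f^{-n}(x_0)}e^{S_n\varphi(y)}\ge P-\eps$ is ``the main obstacle,'' and propose Katok-type shadowing for it; this parallels the paper (Lemma~\ref{l:diffeomorphic pressure}, via Pesin/Katok theory and a uniformly expanding subsystem). It is the \emph{upper} bound, however, that is genuinely delicate in the interval setting, and your sketch for it does not hold up. You propose to ``apply the variational principle to weak$^*$ accumulation points of time averages of the natural weighted atomic measures on $f^{-n}(x_0)$,'' but the difficulty is precisely that the fibers $f^{-n}(x_0)$ are \emph{not} $(n,\eps)$-separated: preimages can cluster near the critical orbit, so a na\"ive Misiurewicz-style argument does not produce an entropy bound. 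The paper resolves this by Przytycki's counting argument~\cite[Lemma~$4$]{Prz90}: one bounds $P_{(n,\eps')}$, the maximal number of $(n,\eps')$-close points in a fiber, by $N^{2n(\#\Crit'(f))^2/L}$, using the finiteness of $\Crit(f)\cap J(f)$ and the absence of periodic critical points. Without something of this kind, the bound $\limsup_n\frac1n\log\sum_{y\in f^{-n}(x_0)}e^{S_n\varphi(y)}\le P(f,\varphi)$ is not established, and the identification of the transition parameter with $P(f,\varphi)$ fails.

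Second, atom-freeness. You argue that if $\mu(\{x\})>0$, then iterating conformality along the forward orbit gives $\mu(\{f^n(x)\})=\exp(nP-S_n\varphi(x))\mu(\{x\})$. This is only valid if every $f^k(x)$, $0\le k<n$, avoids $\Tur(f)\cup\partial I$, because the Patterson-Sullivan limit first yields conformality only for Borel sets whose closure avoids $\Tur(f)\cup\partial I$ (weak$^*$ convergence passes to $\mu(f(A))=\int_A e^{P-\varphi}\,d\mu$ only for sets with $\mu(\partial A)=\mu(\partial f(A))=0$ and away from the non-local-injectivity locus; full conformality on all Borel sets is obtained \emph{after} atom-freeness, not before — see the paper's parts~$3$ and~$4$). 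At a turning point $c$ one only gets the inequality $2\mu(\{f(c)\})\ge\mu(\{c\})\exp(P-\varphi(c))$, and similarly at $\partial I$. The paper's part~$3$ is careful here: it uses the absence of periodic turning points in $J(f)$ (which for $f\in\sA$ follows because a periodic critical point is super-attracting and hence off $J(f)$) to move to a point $y_*=f^{n_1}(x_*)$ whose whole forward orbit avoids $\Tur(f)$, picking up the controlled factor $2^{-n_1}$, and only then runs your unboundedness argument along $f^{kN}(y_*)$. Your sketch, as written, silently assumes the orbit of the atom misses $\Tur(f)$, which you have not arranged. Filling these two gaps — the Przytycki counting for the upper bound, and the turning-point handling for atom-freeness — would bring your argument in line with the paper.
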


Given a multimodal map~$f$, and a continuous potential
$\varphi:J(f)\rightarrow \R$, denote by~$\cL_\varphi$ the
\emph{transfer} or \emph{Ruelle-Perron-Frobenius operator}, acting on
the space of bounded functions defined on~$J(f)$ and taking values
in~$\C$, defined as follows
$$ \cL_{\varphi}(\psi)(x)
\=
\sum_{y\in f^{-1}(x)}\exp\left(\varphi(y)\right) \psi(y). $$

The following is our main result.
It is obtained by combining Theorem~\ref{t:conformal measure}, a result of Keller in~\cite{Kel85}, and known arguments.

\begin{theoalph}
\label{t:spectral gap}
Let~$f$ be an interval map in~$\sA$ that is topologically exact on~$J(f)$.
Let~$\alpha$ be in~$(0, 1]$, and let~$\varphi : J(f) \to \R$ be a
H{\"o}lder continuous potential of exponent~$\alpha$ that is
hyperbolic for~$f$.
Finally, let~$\mu$ be an atom-free  $\exp(P(f,\varphi)-\varphi)$-conformal measure for $f$ given by Theorem~\ref{t:conformal measure}.
Then there is~$A > 0$ such that for the space~$\Hspace^{\alpha, 1}(\mu)$ defined in~\S\ref{ss:keller spaces} with~$X = J(f)$, the following properties hold.
\begin{description}
\item[Spectral gap]
The operator~$\cL_{\varphi}$ maps~$\Hspace^{\alpha,1}(\mu)$ to itself, and~$\cL_{\varphi}|_{\Hspace^{\alpha, 1}(\mu)}$ is bounded.
Moreover, the number $\exp(P(f, \varphi))$ is an eigenvalue of algebraic multiplicity~$1$ of~$\cL_{\varphi}|_{\Hspace^{\alpha, 1}(\mu)}$, and there exists~$\rho$ in~$(0, \exp(P(f, \varphi)))$ such that the spectrum of~$\cL_{\varphi}|_{\Hspace^{\alpha,1}(\mu)}$ is contained in~$B(0, \rho) \cup \{ \exp(P(f, \varphi)) \}$.
\item[Equilibrium state]
There is a unique equilibrium state~$\nu$ of~$f$ for the potential~$\varphi$.
Moreover, this measure is absolutely continuous with respect to~$\mu$, and the measure-theoretic entropy of~$\nu$ is strictly positive.
Finally, there is a constant $C > 0$ such that for every integer~$n \ge 1$, every bounded measurable function~$\phi : J(f) \to \C$, and every~$\psi$ in~$\Hspace^{\alpha,1}(\mu)$, we have
$$ C_n(\phi, \psi)
\=
\left|\int_{J(f)}\phi \circ f^n \cdot \psi \ d\nu - \int_{J(f)}\phi \ d\nu \int_{J(f)} \psi \ d\nu \right|
\le
C \|\phi\|_{\infty} \| \psi \|_{\alpha, 1}  \rho^n. $$
\item[Real analyticity of pressure]
For each H{\"o}lder continuous function~$\chi : J(f) \to \R$, the function~$t \mapsto P(f, \varphi + t \chi)$ is real analytic on a neighborhood of~$t = 0$.
\end{description}
\end{theoalph}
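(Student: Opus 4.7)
The plan is to verify the hypotheses of Keller's quasi-compactness theorem from~\cite{Kel85} for the transfer operator $\cL_{\varphi}$ acting on the space $\Hspace^{\alpha, 1}(\mu)$, and then to read off each of the three listed conclusions from the resulting spectral gap. First I would normalize by setting $\varphi_0 \= \varphi - P(f, \varphi)$: Theorem~\ref{t:conformal measure} says precisely that $\mu(f(A)) = \int_A e^{-\varphi_0}\,d\mu$ on every Borel set $A$ where $f$ is injective, and a direct change-of-variables computation then yields both $\cL_{\varphi_0}^{*} \mu = \mu$ and the contraction $\|\cL_{\varphi_0}\psi\|_1 \le \|\psi\|_1$. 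Using that $\varphi$ is hyperbolic, I would fix $N \ge 1$ with $s \= \sup_{J(f)} \tfrac{1}{N} S_N \varphi_0 < 0$.

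The technical core of the argument is a Lasota--Yorke inequality in $\Hspace^{\alpha, 1}(\mu)$: for some $A > 0$, constants $C_1, C_2 > 0$, and $\theta \= e^{Ns} \in (0,1)$,
$$ \bigl|\cL_{\varphi_0}^{N} \psi\bigr|_{\alpha, 1} \;\le\; C_1\, \theta\, |\psi|_{\alpha, 1} + C_2\, \|\psi\|_1 $$
for every $\psi \in \Hspace^{\alpha, 1}(\mu)$. I would prove this by decomposing $f^{-N}$ into its finitely many monotone inverse branches, pulling back each pseudo-ball $B_d(x, \eps)$ appearing in the definition of $\osc_1$, and then applying the conformal change-of-variables to rewrite the resulting branch integrals as integrals against $\mu$ on $J(f)$; each branch contributes a weight $\exp(S_N \varphi_0)$, bounded by $\theta$ by the choice of $N$, which is the source of the contraction. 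H{\"o}lder continuity of $\varphi$ of exponent $\alpha$ controls the distortion of these weights across a branch, and the boundary contributions at the finitely many critical points and branch endpoints are absorbed into $C_2 \|\psi\|_1$ once $A$ is chosen small enough, using that $\mu$ is atom-free. I expect this step to be the main obstacle: it is here that the $L^1$-flavour of the oscillation semi-norm replaces the pointwise bounded-distortion arguments of the inducing-scheme approach, which is what allows the method to cover maps in $\sA$ with flat critical points.

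With the Lasota--Yorke inequality in hand, Keller's theorem gives that $\cL_{\varphi_0}$ is quasi-compact on $\Hspace^{\alpha, 1}(\mu)$ with essential spectral radius at most $\theta^{1/N} < 1$. Because $\cL_{\varphi_0}^{*} \mu = \mu$ and the operator preserves non-negative functions, $1$ is an eigenvalue whose modulus equals the spectral radius; topological exactness of $f|_{J(f)}$ together with $\supp \mu = J(f)$ from Theorem~\ref{t:conformal measure} then yield its simplicity and the absence of other peripheral eigenvalues through the standard Perron--Frobenius argument on an invariant cone of strictly positive functions. Rescaling by $e^{P(f, \varphi)}$ produces the spectral gap as stated. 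If $h \in \Hspace^{\alpha, 1}(\mu)$ denotes a normalized positive eigenfunction, then $\nu \= h \mu$ is $f$-invariant, and the exponential mixing bound for $C_n(\phi, \psi)$ follows from the identity
$$ \int \phi \circ f^n \cdot \psi \,d\nu \;=\; e^{-n P(f, \varphi)} \int \phi \cdot \cL_\varphi^n (h \psi)\,d\mu $$
by isolating the rank-one projection onto $h$ and controlling the remainder by a constant times $\rho^n$.

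That $\nu$ is the unique equilibrium state and that $h_\nu(f) > 0$ then follow from standard thermodynamic-formalism arguments (see e.g.~\cite{Kel98, PrzUrb10}); the hyperbolicity of $\varphi$ applied to $\nu$ forces $h_\nu(f) = P(f, \varphi) - \int \varphi \,d\nu > 0$. For the real analyticity of $t \mapsto P(f, \varphi + t\chi)$, I would note that $\cL_{\varphi + t\chi} = M_{e^{t\chi}} \cL_\varphi$, where $M_g$ denotes multiplication by $g$; this exhibits the family as analytic in $t$ in the norm topology of bounded operators on $\Hspace^{\alpha, 1}(\mu)$. Since hyperbolicity is an open condition on the potential, the leading eigenvalue $\exp(P(f, \varphi + t\chi))$ remains isolated and simple for $t$ near $0$, and Kato's perturbation theory for isolated simple eigenvalues then gives real analyticity of this eigenvalue, and hence of its logarithm, in a neighborhood of $t = 0$.
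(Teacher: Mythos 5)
Your overall strategy matches the paper's: normalize via Lemma~\ref{l:onetime} to reduce to the case $\sup_{J(f)}\varphi < P(f,\varphi)$, verify that the transfer operator and the conformal measure satisfy the axioms H$1$--H$3$ of \S\ref{ss:quasicompactness}, invoke quasi-compactness of $\hcL_\varphi$ on $\Hspace^{\alpha,1}(\mu)$, identify the simple leading eigenvalue, and finish with Kato perturbation theory for analyticity. The paper, however, does not re-prove the Lasota--Yorke inequality: it cites Keller's Theorems~$3.2$--$3.3$ (stated here as Theorem~\ref{t:quasicompactness}), which apply as soon as $g = \exp(\varphi - P(f,\varphi))$ has bounded $1/\alpha$-variation and $\sup_{J(f)} g_n < 1$ for some $n$. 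Your sketch of a direct proof via inverse branches is in the spirit of Keller's original argument, but it is precisely the technical work the paper avoids by citation, and phrases like ``controls the distortion'' risk reintroducing the pointwise distortion estimates that this method is designed to bypass.

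The genuine gap is in the uniqueness of the equilibrium state. The spectral gap and the eigenfunction $h$ give a unique equilibrium state \emph{among measures absolutely continuous with respect to $\mu$}; you must still rule out equilibrium states singular with respect to $\mu$, and this is not a consequence of ``standard thermodynamic-formalism arguments'' alone in this non-uniformly expanding setting. The paper argues that any equilibrium state $\nu$ satisfies $h_\nu(f) \ge P(f,\varphi) - \sup_{J(f)}\varphi > 0$ (a point you do note), hence has positive Lyapunov exponent by Ruelle's inequality, and then appeals to Dobbs' Pesin-theoretic result~\cite{Dob1304}, which, combined with topological exactness, yields uniqueness. Without this step --- or a substitute such as differentiability of $\chi \mapsto P(f,\varphi+\chi)$ in \emph{all} continuous directions $\chi$, which your analyticity for H\"older $\chi$ does not immediately give --- uniqueness is not established.
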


\begin{rema}
It is well-known that the spectral gap property implies that the equilibrium state has strong stochastic properties.
In Theorem~\ref{t:spectral gap} we emphasize the exponential decay of correlations.
See~\cite[Theorem~$3.3$]{Kel85} for the Central Limit Theorem and the Almost Sure Invariance Principle.
\end{rema}

Given a compact subset~$X$ of~$\R$ and~$\alpha$ in~$(0,1]$, for each H{\"o}lder continuous function $h : X \to \C$ of exponent~$\alpha$, put
$$ |h|_\alpha \= \sup_{x,x'\in X,x\neq x'} \frac{|h(x)-h(x')|}{|x-x'|^\alpha}
\text{  and  }
\|h\|_\alpha \= \|h\|_{\infty} + |h|_\alpha. $$
Then for each~$A > 0$ and each atom-free Borel probability measure~$m$
on~$X$, each H{\"o}lder continuous function $h : X \to \C$ of exponent~$\alpha$ is in~$\Hspace^{\alpha, 1}(m)$ and we have
$$ \| h \|_{\alpha, 1}
\le
2^{\alpha} \max \{ 1, (\sup X - \inf X)^\alpha \} \| h \|_{\alpha}, $$
see~\eqref{e:1} and part~$2$ of Proposition~\ref{p:keller spaces properties}.
Thus, the following corollary is a direct consequence of Theorem~\ref{t:spectral gap}.
\begin{coro}
\label{c:stochastic equilibria}
Let~$I$ be a compact interval of $\R$ and let~$f : I \to I$ be an interval map in~$\sA$ that is topologically exact on~$I$.
Then for every H{\"o}lder continuous potential~$\varphi : I \to \R$ that is hyperbolic for $f$, there is a unique equilibrium state~$\nu$ of~$f$ for the potential~$\varphi$ and the measure-theoretic entropy of this measure is strictly positive.
Moreover, if~$\alpha$ in~$(0, 1]$ is such that~$\varphi$ is H{\"o}lder continuous of exponent~$\alpha$, then there are constants $C > 0$ and~$\rho$ in~$(0, 1)$, such that for every integer~$n \ge 1$, every bounded measurable function~$\phi : J(f) \to \C$, and every H{\"o}lder continuous function $\psi : J(f) \to \C$ of exponent~$\alpha$, we have
$$ C_n(\phi, \psi)
\le
C \|\phi\|_{\infty} \| \psi \|_{\alpha}  \rho^n. $$
Finally, for every H{\"o}lder continuous function~$\chi : I \to \R$, the function~$t \mapsto P(\varphi + t \chi)$ is real analytic on a neighborhood of~$t = 0$.
\end{coro}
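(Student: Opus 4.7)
My plan is to deduce Corollary~\ref{c:stochastic equilibria} directly from Theorem~\ref{t:spectral gap} together with the embedding of the classical H{\"o}lder space into~$\Hspace^{\alpha,1}(\mu)$ that is stated in the paragraph preceding the corollary. The only preliminary observation needed is that the hypothesis of topological exactness on~$I$ forces~$J(f) = I$: given any nonempty open subset~$U$ of~$I$, by topological exactness some iterate~$f^n$ satisfies~$f^n(U) = I$, so the family of iterates of~$f$ cannot be normal on any nonempty open subset of~$I$ (since~$I$ contains more than one point), which means~$J(f) = I$. Together with the standing hypothesis~$f \in \sA$, this places us in the setup of Theorem~\ref{t:conformal measure} and Theorem~\ref{t:spectral gap}.

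With~$J(f) = I$ in hand, I first apply Theorem~\ref{t:conformal measure} to produce an atom-free $\exp(P(f,\varphi) - \varphi)$-conformal measure~$\mu$ for~$f$, supported on all of~$I$. Feeding this measure into Theorem~\ref{t:spectral gap} yields a constant~$A > 0$ and a Banach space~$\Hspace^{\alpha,1}(\mu)$, together with the full conclusion: existence and uniqueness of the equilibrium state~$\nu$, strict positivity of~$h_\nu(f)$, the real-analytic dependence of~$P(f, \varphi + t\chi)$ on~$t$ near~$t = 0$, and the exponential decay of correlations bound
\begin{equation*}
C_n(\phi, \psi) \le C \|\phi\|_\infty \|\psi\|_{\alpha,1} \rho^n
\end{equation*}
for bounded measurable~$\phi$ and $\psi \in \Hspace^{\alpha,1}(\mu)$. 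The first, second and last assertions of the corollary are then immediate transcriptions of these conclusions.

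The only remaining point is to upgrade the decay estimate from test functions in~$\Hspace^{\alpha,1}(\mu)$ to ordinary H{\"o}lder continuous functions of exponent~$\alpha$. For this, I invoke the inequality
\begin{equation*}
\|\psi\|_{\alpha,1} \le 2^{\alpha} \max\{1, (\sup I - \inf I)^{\alpha}\} \|\psi\|_{\alpha},
\end{equation*}
stated just before the corollary and established via~\eqref{e:1} and part~$2$ of Proposition~\ref{p:keller spaces properties}. Substituting this bound into the conclusion of Theorem~\ref{t:spectral gap} and absorbing the geometric factor into the constant~$C$ gives the desired estimate with~$\|\psi\|_\alpha$ in place of~$\|\psi\|_{\alpha,1}$.

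There is no genuine obstacle here: the corollary is packaged as a specialization of Theorem~\ref{t:spectral gap}, and the verification is essentially bookkeeping. The most delicate point, and the only one that might merit a line of explanation in the write-up, is the identification~$J(f) = I$ from topological exactness; once that is in place, the other three assertions follow by direct quotation, modulo the continuous embedding of the H{\"o}lder space into~$\Hspace^{\alpha,1}(\mu)$.
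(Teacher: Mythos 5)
Your proof is correct and takes essentially the same route as the paper, which simply declares the corollary a direct consequence of Theorem~\ref{t:spectral gap} via the embedding $\|h\|_{\alpha,1} \le 2^{\alpha}\max\{1,(\sup X - \inf X)^{\alpha}\}\|h\|_{\alpha}$ stated just before it. Your additional remark that topological exactness on~$I$ forces~$J(f)=I$ (so that the hypotheses of Theorems~\ref{t:conformal measure} and~\ref{t:spectral gap} are met and the H{\"o}lder norms on~$I$ and~$J(f)$ coincide) is a correct and worthwhile detail that the paper leaves implicit.
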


In~\cite[Theorem~A]{LiRiv14} we show that for a map~$f$ as in
Corollary~\ref{c:stochastic equilibria}, every H{\"o}lder continuous
potential~$\varphi$ is hyperbolic for~$f$, provided that~$f$ satisfies some
additional regularity assumptions, that all the periodic points of~$f$ are hyperbolic repelling, and that for every critical value~$v$ of~$f$ we have
$$
\lim_{n \to + \infty} |Df^n(v)| = + \infty.
$$
Recall that a periodic point~$p$ of~$f$ of period~$n$ is \emph{hyperbolic repelling}, if~$|D f^n(p)| > 1$.
Thus, for such a map~$f$ the conclusions of Corollary~\ref{c:stochastic equilibria} hold for all H{\"o}lder continuous functions~$\varphi$, $\psi$, and~$\chi$.
See~\cite[Main Theorem]{LiRiv14} for a more general formulation of this result.

For a H{\"o}lder continuous potential, Corollary~\ref{c:stochastic
  equilibria} improves~\cite[Theorem~$4$]{BruTod08} in various ways.
The first is that in Corollary~\ref{c:stochastic equilibria} no bounded distortion hypothesis is assumed, in contrast with~\cite[Theorem~$4$]{BruTod08} where the existence of an induced map with bounded distortion is assumed. 
The second is that the hypothesis that the potential is hyperbolic in Corollary~\ref{c:stochastic equilibria} is weaker than the ``bounded range'' condition assumed in~\cite[Theorem~$4$]{BruTod08}.
See Appendix~\ref{s:big oscillation} for the definition of the bounded range condition and for examples showing that this condition is more restrictive than hyperbolicity.
The fact that our results hold for hyperbolic potentials, and not only
for potentials satisfying the bounded range condition, is crucial to
obtain the main results of the companion paper~\cite{LiRiv14}.
Finally, Corollary~\ref{c:stochastic equilibria} applies to a larger class of
interval maps, including maps having flat critical points.

\subsection{Organization}
\label{ss:organization}
The proof of Theorem~\ref{t:conformal measure} occupies~\S\ref{s:tree pressure} and~\S\ref{s:conformal measure}.
In~\S\ref{s:tree pressure} we show that for a hyperbolic potential the pressure function can be calculated using iterated preimages of any given point (Corollary~\ref{c:tree pressure}).
Then this is used in~\S\ref{s:conformal measure} to apply the Patterson-Sullivan method to construct a conformal measure (Proposition~\ref{p:conformal measure}).

In~\S\ref{sec:keller} we first recall some properties of the function spaces defined by Keller in~\S\ref{ss:keller spaces properties}.
In~\S\ref{ss:quasicompactness} we recall the main results of~\cite{Kel85} (Theorem~\ref{t:quasicompactness}) and deduce from it some known consequences needed for the proof of Theorem~\ref{t:spectral gap} (Corollary~\ref{c:quasicompactness}).
The proof of Theorem~\ref{t:spectral gap} is given in~\S\ref{s:proof of main theorem}.

\subsection{Acknowledgments}
We would like to thank Michal Szostakiewicz for a useful discussion related to Lemma~\ref{l:big oscillation}.
The second named author would also like to thank Daniel Smania for quasi-periodic discussions about Keller spaces.
Finally, we would like to thank the referees for their remarks that
helped improve the exposition of the paper.
\section{Iterated preimages pressure}
\label{s:tree pressure}
The main goal of this section is to prove the following proposition.
For a multimodal map~$f$ and a continuous potential~$\varphi : J(f) \to
\R$, put
$$ \hcL_{\varphi}
\=
\exp(- P(f, \varphi)) \cL_{\varphi}. $$
\begin{prop}
\label{p:tree pressure}
Let~$f$ be an interval map in~$\sA$ that is topologically exact on its Julia set~$J(f)$, and denote by~$\1$ the function defined on~$J(f)$ that is constant equal to~$1$.
Then for every $\eps>0$, and every H{\"o}lder continuous potential $\varphi:J(f)\to \R$ that is hyperbolic for~$f$, we have for every sufficiently large integer~$n$
$$\exp(-\eps n)
\le
\inf_{J(f)}\hcL_{\varphi}^n(\1)
\le
\sup_{J(f)}\hcL_{\varphi}^n(\1)
\le
\exp(\eps n). $$
\end{prop}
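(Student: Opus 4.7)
The plan is to establish the upper and lower bounds on $Z_n(x) \= \sum_{y \in f^{-n}(x)} e^{S_n\varphi(y)}$ separately: the upper bound via a variational-principle / separated-set comparison, and the lower bound via a shadowing argument driven by topological exactness. Since $\hcL_\varphi^n(\1)(x) = e^{-nP(f,\varphi)}Z_n(x)$, the proposition amounts to showing that for every $\eps>0$ and every $n$ sufficiently large,
$$ e^{(P(f,\varphi)-\eps)n} \;\le\; \inf_{J(f)} Z_n \;\le\; \sup_{J(f)} Z_n \;\le\; e^{(P(f,\varphi)+\eps)n}. $$

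For the upper bound, I would fix $\delta>0$ small enough that $|\varphi|_\alpha\delta^\alpha < \eps/2$, so that $S_n\varphi$ oscillates by at most $\eps n/2$ on any $(n,\delta)$-Bowen ball. I would then show that the preimages in $f^{-n}(x)$ lying in a common $(n,\delta)$-Bowen ball form clusters of uniformly bounded size; this uses the elementary observation that two distinct preimages of the same point under $f$ lie in distinct monotonicity laps, combined with the finiteness of $\Crit(f)$ to control how often such clusters can branch. Extracting one representative per cluster yields an $(n,\delta)$-separated subset $E_n\subseteq f^{-n}(x)$ with $Z_n(x) \le C\, e^{\eps n/2}\sum_{z\in E_n}e^{S_n\varphi(z)}$, and the separated-set definition of the topological pressure bounds the latter sum by $e^{(P(f,\varphi)+\eps/2)n}$ for all $n$ large.

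For the lower bound, I would pick an ergodic invariant Borel probability measure $\nu$ on $J(f)$ with $h_\nu(f) + \int\varphi\,d\nu > P(f,\varphi) - \eps/4$. Hyperbolicity of $\varphi$ forces $h_\nu(f)>0$: otherwise $\int\varphi\,d\nu \le \sup_{J(f)} \tfrac{1}{n_0}S_{n_0}\varphi < P(f,\varphi)$ for the integer $n_0$ from~\eqref{e:hyperbolic potential}, contradicting the choice of $\nu$. Combining Birkhoff's theorem with the Brin-Katok local entropy formula then produces, for each large $n$ and appropriately small $\delta$, an $(n,\delta)$-separated set $F_n\subset J(f)$ with $\sum_{z\in F_n} e^{S_n\varphi(z)} \ge e^{(P(f,\varphi)-\eps/2)n}$. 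Topological exactness furnishes an integer $N$ with $f^N(B(y,\delta/2))\supset J(f)$ for every $y\in J(f)$; applied to $y=f^n z$ it yields $w\in B(f^n z,\delta/2)$ with $f^N(w)=x$. Provided the orbit $z,fz,\dots,f^{n-1}z$ avoids a fixed neighborhood of $\Crit(f)$, the branch of $f^n$ through $z$ is a diffeomorphism onto its image and pulls $w$ back to a point $y\in f^{-(n+N)}(x)$ that Bowen-shadows $z$ for $n$ steps. H{\"o}lder continuity of $\varphi$ then gives $|S_n\varphi(y)-S_n\varphi(z)| \le n|\varphi|_\alpha\delta^\alpha < \eps n/4$, and summing over $F_n$ yields the claimed lower bound on $Z_{n+N}(x)$, the additive error $N\|\varphi\|_\infty$ from the last $N$ iterates being absorbed into the $\eps$ budget.

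The main obstacle is ensuring that a definite proportion of the mass of $\sum_{z\in F_n}e^{S_n\varphi(z)}$ is carried by orbits staying at a controlled distance from $\Crit(f)$ during the first $n$ iterates, so that the branch pullback is well-defined and its distortion is controlled. This is precisely where the hyperbolicity of $\varphi$ does its deep work, beyond guaranteeing $h_\nu(f)>0$: it prevents $\nu$ from concentrating on orbits that repeatedly shadow the critical set, which is what lets one restrict to a subset of $F_n$ whose orbits stay $\eta$-away from $\Crit(f)$ for a fixed $\eta>0$ while still carrying exponential mass of order $e^{(P(f,\varphi)-\eps)n}$.
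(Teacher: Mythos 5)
Your splitting into an upper bound (via separated sets) and a lower bound (via an ergodic measure of nearly maximal free energy) matches the structure of the paper's argument, and the observation that hyperbolicity forces $h_\nu(f)>0$ is exactly the starting point the paper uses. However, both halves contain a genuine gap.

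For the upper bound, the claim that the preimages of $x$ lying in a common $(n,\delta)$\nobreakdash-Bowen ball form clusters of \emph{uniformly bounded} size is false. An orbit that repeatedly passes near turning points can accumulate $\Theta(n)$ branchings; what one can actually prove, and what the paper's adaptation of Przytycki's Lemma~$4$ proves, is that the cluster size $P_{(n,\eps')}$ grows subexponentially with an arbitrarily small specified rate. The mechanism is the combinatorics of critical recurrence: since no critical point in $J(f)$ is periodic, one can shrink $\eps'$ relative to a large parameter $L$ so that after branching near a turning point $c$ the orbit shadows the orbit of $c$ and cannot branch again for roughly $L$ steps, except at the finitely many times $l$ with $f^l(c)\in\Crit(f)$; this yields $P_{(n,\eps')}\le N^{2n(\#\Crit'(f))^2/L}\le e^{\eps n/2}$ for $n$ large, which suffices. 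Your clustering idea is sound but you must replace the $O(1)$ bound by this $e^{o(n)}$ bound before the argument closes.

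For the lower bound, the shadowing step is where the proposal breaks, and your own last paragraph points at the problem without resolving it. Hyperbolicity of $\varphi$ does not, on its own, prevent the Brin--Katok separated set $F_n$ from containing points whose orbits pass close to $\Crit(f)$; indeed hyperbolicity is a property of the potential, not of $\nu$. Its actual role is to force $h_\nu(f)>0$, which by Ruelle's inequality gives $\nu$ a strictly positive Lyapunov exponent, and then Katok--Pesin theory (as adapted to interval maps; Lemma~\ref{l:diffeomorphic pressure} in the paper, relying on~\cite[Theorem~$11.6.1$]{PrzUrb10} and~\cite[Theorem~$6$]{Dob0801}) produces a compact forward-invariant set $X\subset J(f)$ on which $f$ is open and uniformly expanding and which carries pressure within $\eps$ of $P(f,\varphi)$. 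On such an $X$ every pull-back of a fixed ball $B(x_0,\delta)$ by $f^n$ is diffeomorphic by uniform expansion, so the preimage-counting goes through, and then topological exactness of $f$ on $J(f)$ propagates the lower bound from a point of $X$ to an arbitrary point of $J(f)$. In your version, the branch of $f^n$ through $z$ is indeed a homeomorphism when the orbit of $z$ avoids $\Tur(f)$, but there is no a priori lower bound on the size of its image $f^n(B(z,r))$; near a critical point the branch can be violently contracting, so it need not cover $B(f^n z,\delta/2)$ and the point $w$ need not pull back. This is not an issue you can absorb into the $\eps$ budget: without the uniformly expanding horseshoe (or some equivalent bounded-distortion input, which the paper deliberately avoids assuming), the shadowing pull-back simply may not exist.
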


The following corollary is a direct consequence of the proposition.
This corollary is used in the next section.
\begin{coro}
\label{c:tree pressure}
Let~$f$ be an interval map in~$\sA$ that is topologically exact on~$J(f)$.
Then for every H{\"o}lder continuous potential $\varphi:J(f)\to \R$ that is hyperbolic for $f$, and every point~$x_0$ in $J(f)$, we have $$P(f,\varphi)=\lim_{n \to + \infty} \frac{1}{n} \log \sum_{y\in f^{-n}(x_0)} \exp(S_n(\varphi)(y)). $$
\end{coro}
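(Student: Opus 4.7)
The plan is to derive the corollary by evaluating the inequalities of Proposition~\ref{p:tree pressure} at the single point~$x_0$ and then unwinding the definition of the normalized transfer operator~$\hcL_\varphi$.

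First I would verify by induction on~$n$ the standard identity
$$ \cL_{\varphi}^n(\1)(x_0)
=
\sum_{y \in f^{-n}(x_0)} \exp\bigl( S_n(\varphi)(y) \bigr), $$
using that for each preimage~$y$ of~$x_0$ under~$f^n$ the Birkhoff sum~$S_n(\varphi)(y) = \sum_{j=0}^{n-1} \varphi(f^j(y))$ accumulates along the orbit~$y, f(y), \ldots, f^{n-1}(y)$, and that the preimage tree of depth~$n$ over~$x_0$ is exactly the disjoint union, over preimages~$y' \in f^{-1}(x_0)$, of the preimage trees of depth~$n-1$ over~$y'$. Combining this with the definition~$\hcL_{\varphi}^n = \exp(-n P(f,\varphi)) \cL_{\varphi}^n$ yields
$$ \hcL_{\varphi}^n(\1)(x_0)
=
\exp(-n P(f, \varphi)) \sum_{y \in f^{-n}(x_0)} \exp\bigl( S_n(\varphi)(y) \bigr). $$

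Next, fix~$\eps > 0$. By Proposition~\ref{p:tree pressure} (applied to this~$\eps$ and to~$\varphi$), for every sufficiently large~$n$ one has, in particular at the point~$x_0 \in J(f)$,
$$ \exp(-\eps n)
\le
\hcL_{\varphi}^n(\1)(x_0)
\le
\exp(\eps n). $$
Substituting the identity above, taking logarithms, and dividing by~$n$ gives
$$ -\eps
\le
\frac{1}{n} \log \sum_{y \in f^{-n}(x_0)} \exp\bigl( S_n(\varphi)(y) \bigr) - P(f, \varphi)
\le
\eps $$
for all sufficiently large~$n$.

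Since~$\eps > 0$ is arbitrary, this proves that the limit exists and equals~$P(f, \varphi)$, as desired. There is essentially no obstacle here: the corollary is a bookkeeping consequence of Proposition~\ref{p:tree pressure}, with the only mildly nontrivial ingredient being the iterated expansion of~$\cL_{\varphi}^n(\1)$ as a sum of exponentials of Birkhoff sums over the tree of $n$-th preimages; note that this sum is finite because~$f$ has finitely many critical points and hence every point of~$J(f)$ has only finitely many preimages under each iterate of~$f$.
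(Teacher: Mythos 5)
Your proposal is correct and matches the paper's intent: the paper simply declares the corollary a ``direct consequence'' of Proposition~\ref{p:tree pressure}, and the unwinding you carry out (expanding $\cL_\varphi^n(\1)(x_0)$ as the sum over $n$-th preimages, rescaling by $\exp(-nP(f,\varphi))$, evaluating the two-sided bound of the proposition at $x_0$, taking logarithms, and letting $\eps\to 0$) is exactly the implicit argument.
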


The remainder of this section is devoted to the proof of
Proposition~\ref{p:tree pressure}, which depends on several lemmas. 

\begin{lemm}
\label{l:onetime}
Let~$f$ be a Lipschitz multimodal map and let $\varphi:J(f)\to \R$ be a H{\"o}lder continuous potential. 
Then for every integer $N\ge 1$, there is a constant $C>1$ such that the function~$\tvarphi\= \frac{1}{N}S_N(\varphi)$ satisfies the following properties:
\begin{enumerate}
\item[1.]
The function~$\tvarphi$ is H{\"o}lder continuous of the same exponent as $\varphi$, $P(f, \tvarphi) = P(f, \varphi)$, and~$\varphi$ and~$\tvarphi$ share the same equilibrium states;
\item[2.]
For every integer $n\ge 1$, we have
$$ \sup_{J(f)} \left| S_n(\tvarphi) - S_n(\varphi) \right|
\le
C. $$
\end{enumerate}
\end{lemm}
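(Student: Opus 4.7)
The plan is to prove part~$2$ via an explicit telescoping identity, from which part~$1$ will follow with little additional work. Indeed, the identity also reveals that $\tvarphi - \varphi$ is the coboundary of a continuous function, so that $\tvarphi$ and $\varphi$ are cohomologous in a strong sense.

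For part~$2$, I would start from the definition and rearrange the resulting double sum
$$ S_n(\tvarphi) = \frac{1}{N} \sum_{k=0}^{n-1} S_N(\varphi) \circ f^k = \frac{1}{N}\sum_{j=0}^{N-1} S_n(\varphi \circ f^j). $$
Using the elementary identity $S_n(\varphi \circ f^j) = S_{n+j}(\varphi) - S_j(\varphi) = S_n(\varphi) + S_j(\varphi)\circ f^n - S_j(\varphi)$, this yields
$$ S_n(\tvarphi) - S_n(\varphi) = \frac{1}{N}\sum_{j=0}^{N-1}\left( S_j(\varphi) \circ f^n - S_j(\varphi)\right). $$
Since each $S_j(\varphi)$ with $0 \le j \le N-1$ is uniformly bounded by $N\|\varphi\|_\infty$ on $J(f)$, the right-hand side is bounded in absolute value by $2N\|\varphi\|_\infty$, giving part~$2$ with any constant $C > 2N\|\varphi\|_\infty$.

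For part~$1$, H\"older continuity of $\tvarphi$ of the same exponent as $\varphi$ follows from the fact that $f$ is Lipschitz: if $\varphi$ is H\"older of exponent~$\alpha$, then each $\varphi \circ f^j$ is H\"older of exponent~$\alpha$ with seminorm at most $|\varphi|_\alpha \Lip(f)^{j\alpha}$, and $\tvarphi$ is the average of these $N$ functions. For the pressure equality and the agreement of equilibrium states, I would invoke the variational principle~\eqref{e:variational principle}. For every $f$-invariant Borel probability measure $\nu$ on $J(f)$, invariance yields $\int \varphi \circ f^j \, d\nu = \int \varphi \, d\nu$ for each $j$, hence $\int \tvarphi \, d\nu = \int \varphi \, d\nu$. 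Therefore $h_\nu(f) + \int \tvarphi \, d\nu = h_\nu(f) + \int \varphi \, d\nu$ for every such $\nu$, so $P(f,\tvarphi) = P(f,\varphi)$ and the two potentials share the same equilibrium states.

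I do not foresee any substantial obstacle: the lemma is essentially a bookkeeping exercise, and the only mild subtlety is to ensure that the boundary error in $S_n(\tvarphi) - S_n(\varphi)$ is controlled uniformly in~$n$, which the telescoping identity above handles cleanly.
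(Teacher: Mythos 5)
Your proof is correct and follows essentially the same route as the paper: both identify $\tvarphi - \varphi$ as the coboundary of the bounded function $h = \frac{1}{N}\sum_{j=0}^{N-1} S_j(\varphi)$ and telescope $S_n(\tvarphi) - S_n(\varphi) = h\circ f^n - h$, then use Lipschitz continuity of $f$ for the H\"older regularity and the variational principle for the pressure equality. Your telescoping identity is valid for all $n\ge 1$ (the paper's restriction to $n\ge N$ is unnecessary), so if anything your version is marginally cleaner.
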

\begin{proof}
Let~$h : J(f) \to \R$ be defined by
$$ h \= - \frac{1}{N} \sum_{j = 0}^{N - 1} (N- 1 -j) \varphi \circ f^j, $$
and note that $\varphi = \tvarphi + h - h \circ f$.

\partn{1}
Since~$f$ is Lipschitz, $\tvarphi$ is H{\"o}lder continuous of the same exponent as~$\varphi$.
On the other hand, for every invariant measure~$\nu$ of~$f$, we have
$$ \int_{J(f)} \tvarphi \ d \nu
=
\int_{J(f)} (\varphi+h\circ f-h) \ d \nu
=
\int_{J(f)} \varphi \ d \nu, $$
so~$P(f, \tvarphi) = P(f, \varphi)$, and~$\varphi$ and~$\tvarphi$ share the same equilibrium states.

\partn{2}
Note that for every $n\ge N$,  we have
\begin{align*}
S_n(\tvarphi)
=
S_n(\varphi+h\circ f-h)
=
S_n(\varphi)+h\circ f^n-h.
\end{align*}
This implies the desired inequality with $C=(N-1)\left( \sup_{J(f)}\varphi
  -\inf_{J(f)}\varphi \right)$.
\end{proof}

Given an integer~$n \ge 1$ and a point~$x$ in the domain of~$f$, a preimage~$y$ of~$x$ by~$f^n$ is \emph{critical} if~$Df^n(y) = 0$, and it is \emph{non-critical} otherwise.
\begin{lemm}
\label{l:diffeomorphic pressure}
Let~$f$ be an interval map in~$\sA$ that is topologically exact on~$J(f)$, and let~$\tvarphi : J(f) \to \R$ be a H{\"o}lder continuous potential satisfying~$\sup_{J(f)} \tvarphi < P(f, \tvarphi)$.
Then for every~$\varepsilon > 0$ and every point~$x_0$ of~$J(f)$ having infinitely many non-critical preimages, there is~$\delta > 0$ such that the following property holds: If for each integer~$n \ge 1$ we denote by~$\fD_n$ the collection of diffeomorphic pull-backs of~$B(x_0, \delta)$ by~$f^n$, then
$$ \liminf_{n \to + \infty} \frac{1}{n} \log \sum_{W \in \fD_n}
\inf_{W \cap J(f)} \exp \left( S_n(\tvarphi) \right)
\ge
P(f, \tvarphi) - \varepsilon. $$
\end{lemm}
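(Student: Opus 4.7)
The plan is to combine the variational principle with a Pesin-theoretic construction of diffeomorphic inverse branches. The guiding idea is that an ergodic invariant measure $\nu$ whose free energy is close to $P(f,\tvarphi)$ must have strictly positive metric entropy (thanks to $\sup_{J(f)}\tvarphi < P(f,\tvarphi)$) and, by Ruelle's inequality for $C^{1+\alpha}$ interval maps, strictly positive Lyapunov exponent; Pesin theory then furnishes abundant diffeomorphic backward branches at a uniform scale, which, composed with one fixed branch through a non-critical preimage of $x_0$, yield the required elements of $\fD_n$.

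First I would fix $\varepsilon_0 > 0$ (to be specialised later) and choose, via the variational principle, an ergodic invariant measure $\nu$ of $f|_{J(f)}$ with $h_\nu(f) + \int \tvarphi\, d\nu \ge P(f,\tvarphi) - \varepsilon_0$. Since $h_\nu(f) \ge P(f,\tvarphi) - \sup_{J(f)}\tvarphi > 0$, Ruelle's inequality yields $\chi_\nu := \int \log|Df|\, d\nu \ge h_\nu(f) > 0$, and so $\nu$ gives no mass to $\Crit(f)$. Pesin theory in the $C^{1+\alpha}$ interval setting then produces a set $\Lambda \subset J(f) \setminus \Crit(f)$ with $\nu(\Lambda) > 0$ and a scale $\delta_1 > 0$ such that for every $x \in \Lambda$ and every $n \ge 1$ with $f^n(x) \in \Lambda$, the interval $B(f^n(x),\delta_1)$ admits a diffeomorphic $f^n$-inverse branch sending $f^n(x)$ to $x$, with distortion controlled by a Koebe-type constant independent of $x$ and $n$.

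Using the density of non-critical preimages of $x_0$ in $J(f)$ (which follows from topological exactness together with the hypothesis that $x_0$ has infinitely many non-critical preimages and $\#\Crit(f) < \infty$), I would select $z \in \Lambda$ in the topological support of $\nu|_\Lambda$, an integer $N \ge 1$, and a non-critical preimage $z_0 \in f^{-N}(x_0)$ arbitrarily close to $z$. Shrinking, I obtain $\delta > 0$ and an open neighborhood $W_0$ of $z_0$ contained in $B(z,\delta_1/2)$ with $f^N : W_0 \to B(x_0,\delta)$ a diffeomorphism of bounded distortion. Katok's formula, combined with Birkhoff's ergodic theorem applied to $\1_{W_0}$ and to $\tvarphi$, then yields for infinitely many $n$ an $(n,\delta_1/2)$-separated subset $\{x_1,\ldots,x_{M_n}\} \subset \Lambda$ with $f^n(x_i) \in \Lambda \cap W_0$, with $S_n\tvarphi(x_i) \ge n(\int \tvarphi\, d\nu - \varepsilon_0)$, and with cardinality $M_n \ge \nu(W_0)\exp(n(h_\nu(f) - 2\varepsilon_0))$. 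For each $i$, the Pesin inverse branch of $f^n$ based at $f^n(x_i)$ pulls $W_0 \subset B(f^n(x_i),\delta_1)$ back diffeomorphically onto a neighborhood $\tW_i$ of $x_i$; composing with $f^N|_{W_0}$ realises $\tW_i$ as a diffeomorphic pull-back of $B(x_0,\delta)$ by $f^{n+N}$, and the $(n,\delta_1/2)$-separation forces the $\tW_i$ to be pairwise disjoint.

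The bounded distortion of the Pesin branches and of $f^N$ on $W_0$ transfers the Birkhoff estimate on $\exp S_n\tvarphi(x_i)$ to $\inf_{\tW_i \cap J(f)} \exp(S_{n+N}\tvarphi) \ge C^{-1}\exp(n(\int \tvarphi\, d\nu - \varepsilon_0))$ for some constant $C > 0$ independent of $n$ and $i$; summing over $i$ yields
$$ \sum_{W \in \fD_{n+N}} \inf_{W \cap J(f)} \exp(S_{n+N}\tvarphi) \ge C^{-1}\nu(W_0)\exp\bigl(n(h_\nu(f) + \int \tvarphi\, d\nu - 3\varepsilon_0)\bigr), $$
and after dividing by $n+N$, taking $\liminf$ as $n \to \infty$, and choosing $\varepsilon_0 < \varepsilon/4$ the conclusion follows. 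The hardest step is the Pesin-theoretic part: producing $\Lambda$ with a uniform scale $\delta_1$ and Koebe distortion, and verifying that for the $x_i$ satisfying both the separation condition and $f^n(x_i) \in W_0$ the Pesin pull-back of $W_0$ actually stays inside the Pesin neighborhood, so that the composition with $f^N$ is genuinely diffeomorphic on all of $\tW_i$.
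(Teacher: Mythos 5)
Your high-level skeleton matches the paper's: invoke the variational principle to get an ergodic measure $\nu$ with nearly-full free energy, use $\sup_{J(f)}\tvarphi < P(f,\tvarphi)$ to get $h_\nu(f) > 0$, deduce $\chi_\nu > 0$ from Ruelle's inequality, and then use Pesin/Katok machinery to produce enough diffeomorphic inverse branches at a definite scale. Where you diverge is the key technical input: you try to carry out a \emph{direct} Pesin--Katok argument on the original system (Pesin blocks, Birkhoff applied to $\1_{W_0}$ and $\tvarphi$, Katok's formula for separated sets), whereas the paper outsources all of that to a black box---\cite[Theorem~11.6.1]{PrzUrb10}, which produces a \emph{uniformly expanding}, topologically transitive, forward-invariant compact $X \subset J(f)$ with $P(f|_X,\tvarphi|_X) \ge P(f,\tvarphi) - \varepsilon$---and then simply invokes the iterated-preimage pressure formula for uniformly expanding maps (\cite[Proposition~4.4.3]{PrzUrb10}) on $X$, routing a single diffeomorphic branch from a non-critical preimage of $x_0$ into $B(X,\delta_0)$. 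Your route is essentially unpacking the inside of that black box; the paper's route buys a much shorter and cleaner part~$1$ and, crucially, a bound that holds for \emph{all} sufficiently large $n$.

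That last point is where your proposal has a real gap. You obtain the separated set $\{x_1,\dots,x_{M_n}\}$ with $f^n(x_i) \in \Lambda \cap W_0$ ``for infinitely many $n$'' and then close by ``taking $\liminf$.'' But an estimate along a subsequence only controls the $\limsup$; the lemma demands a $\liminf$ lower bound, and this is exactly the form used in Lemma~\ref{l:obelow} (``for every integer $n \ge n_0$''). Recurrence/Birkhoff arguments do \emph{not} automatically place $f^n(x)$ in the fixed window $W_0$ at every large time $n$ (only with positive frequency), and $\nu$ need not be mixing, so you cannot just assert the estimate for all large $n$. To repair this you would either need to show the good times have bounded gaps and interpolate by pulling back once more (which raises a secondary issue: a diffeomorphic pull-back of $B(x_0,\delta)$ by $f^{m+N}$ need not admit a further diffeomorphic pull-back by $f^{n-m}$ if the new preimage meets a turning point), or replace the condition ``$f^n(x_i)\in W_0$'' by ``$f^m(x_i)\in W_0$ for some $m$ in $[n-K,n]$'' and absorb the $O(1)$ loss in the Birkhoff sum---both doable, but neither is in your write-up. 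Beyond this, you yourself flag the Pesin-block construction (uniform scale $\delta_1$, Koebe-type distortion on the full $\delta_1$-ball, and the verification that the pull-back of $W_0$ stays inside the Pesin neighborhood) as ``the hardest step'' without supplying it; the paper sidesteps precisely this by reducing to a genuinely uniformly expanding subsystem where none of these issues arise.
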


The proof of this lemma is based on Pesin's theory, as adapted to
interval maps in~$\sA$ by Dobbs in~\cite[Theorem~$6$]{Dob0801}, and Katok's
theory, as adapted to one-dimensional maps by
Przytycki and Urba{\'n}ski in~\cite[\S$11.6$]{PrzUrb10}.
The proofs in~\cite[\S$11.6$]{PrzUrb10} are written for complex
rational maps, but they apply without change to interval maps in~$\sA$
by using~\cite[Theorem~$6$]{Dob0801} instead of~\cite[Corollary~$11.2.4$]{PrzUrb10}.

\begin{proof}
In part~$1$ we show that there is~$\delta_0 > 0$ and a forward invariant compact set~$X$ on
which~$f$ is uniformly expanding, such that the desired assertion
holds for every point~$x_0$ in~$X$ with~$\delta = \delta_0$.
In part~$2$ we deal with the general case using this special case.

\partn{1}
Since by assumption~$\sup_{J(f)} \tvarphi < P(f, \tvarphi)$, there
is~$\varepsilon > 0$ so that~$\varepsilon < P(f, \tvarphi) - \sup_{J(f)}
\tvarphi$.
Let~$\nu$ be a measure in~$\cM(J(f), f)$ such that
$$ h_\nu(f) + \int_{J(f)} \tvarphi \ d\nu
\ge
P(f, \tvarphi) - \varepsilon > \sup_{J(f)} \tvarphi. $$
Replacing~$\nu$ by one of its ergodic components if necessary,
assume~$\nu$ is ergodic.
We thus have
$$ h_\nu(f)
>
\sup_{J(f)} \tvarphi - \int_{J(f)} \tvarphi \ d \nu
\ge
0, $$
and then Ruelle's inequality implies that the Lyapunov exponent
of~$\nu$ is strictly positive, see~\cite{Rue78}.
By~\cite[Theorem~$11.6.1$]{PrzUrb10} there is a compact and forward invariant subset~$X$ of~$J(f)$ on which~$f$ is topologically transitive, so that~$f : X \to X$ is open and uniformly expanding, and so that
$$ P(f|_X, \tvarphi|_X) \ge P(f, \tvarphi) - \varepsilon. $$
It follows that there is~$\delta_0 > 0$ such that the desired property holds for
every point~$x_0$ in~$X$ with~$\delta = \delta_0$, see for example~\cite[Proposition~$4.4.3$]{PrzUrb10}.

\partn{2}
The hypothesis that~$x_0$ has infinitely many non-critical preimages implies that there is a non-critical preimage~$x_0'$ of~$x_0$ such that all preimages of~$x_0'$ are non-critical.
Since~$f$ is topologically exact on~$J(f)$, there is a preimage
of~$x_0'$ in~$B(X, \delta_0)$, and therefore there is an integer~$n
\ge 1$ and a non-critical preimage~$x_0''$ of~$x_0$ by~$f^n$ in~$B(X,
\delta_0)$ all whose preimages are non-critical.
It follows that there is~$\delta > 0$ such that the pull-back of~$B(x_0, \delta)$ by~$f^n$ that contains~$x_0''$ is contained in~$B(X, \delta_0)$.
Then the desired assertion follows from part~$1$.
\end{proof}

\begin{lemm}
\label{l:obelow}
Let~$f$ be an interval map in~$\sA$ that is topologically exact on~$J(f)$, and let $\varphi:J(f)\to \R$ be a H{\"o}lder continuous potential that is hyperbolic for~$f$.
Then for every $\eps>0$ there is $N_0>0$ such that for every integer~$n\ge N_0$, we have
$$ \inf_{J(f)}\hcL_\varphi^n(\1)
\ge
\exp(-\eps n). $$
\end{lemm}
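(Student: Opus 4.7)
The plan is to apply Lemma~\ref{l:onetime} to reduce to the case $\sup_{J(f)}\varphi<P(f,\varphi)$, then invoke Lemma~\ref{l:diffeomorphic pressure} at a well-chosen base point $x_0\in J(f)$, and finally use topological exactness to spread the resulting local estimate uniformly over~$J(f)$.

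First I would fix $N$ with $\sup_{J(f)}\frac{1}{N}S_N(\varphi)<P(f,\varphi)$, which exists because $\varphi$ is hyperbolic for~$f$, and set $\tvarphi:=\frac{1}{N}S_N(\varphi)$. By Lemma~\ref{l:onetime}, $\tvarphi$ is H{\"o}lder continuous with $P(f,\tvarphi)=P(f,\varphi)$ and $\sup_{J(f)}\tvarphi<P(f,\tvarphi)$, and $|S_n(\tvarphi)-S_n(\varphi)|$ is uniformly bounded on $J(f)$; hence $\hcL_\varphi^n(\1)\ge e^{-2C}\hcL_{\tvarphi}^n(\1)$ pointwise on $J(f)$, and it is enough to prove the desired bound for~$\tvarphi$. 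I would then pick $x_0\in J(f)$ that avoids the (countable) forward orbit of~$\Crit(f)$. Such a point exists because $\Crit(f)$ is finite while $J(f)$ is infinite: topological exactness with $|J(f)|\ge 2$ rules out $f|_{J(f)}$ being a homeomorphism, and forces the backward orbit of every point of~$J(f)$ to be dense. All preimages of~$x_0$ under iterates of~$f$ are then non-critical, and infinitely many in number, so Lemma~\ref{l:diffeomorphic pressure} applied to $\tvarphi$ with parameter $\eps/3$ provides $\delta>0$ such that for every sufficiently large~$n$,
$$\sum_{W\in\fD_n}\inf_{W\cap J(f)}\exp\bigl(S_n(\tvarphi)\bigr)\ \ge\ \exp\bigl(n(P(f,\tvarphi)-\eps/2)\bigr).$$

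Next I would use topological exactness to obtain $m\ge 1$ with $f^m(B(x_0,\delta)\cap J(f))=J(f)$. For any $x\in J(f)$, fix $z\in B(x_0,\delta)\cap J(f)$ with $f^m(z)=x$, and for each $W\in\fD_n$ let $y_W\in W$ be the unique preimage of $z$ under $f^n|_W$. By complete invariance of~$J(f)$, each $y_W$ lies in~$J(f)$; the $y_W$ are pairwise distinct since the $W$ are disjoint; and $f^{n+m}(y_W)=x$. Writing
$$S_{n+m}(\tvarphi)(y_W)=S_n(\tvarphi)(y_W)+S_m(\tvarphi)(z)\ \ge\ \inf_{W\cap J(f)}S_n(\tvarphi)-m\|\tvarphi\|_\infty,$$
summing over $W\in\fD_n$, dividing by $\exp((n+m)P(f,\tvarphi))$, and combining with the previous display yields an estimate of the form $\hcL_{\tvarphi}^{n+m}(\1)(x)\ge K\,e^{-n\eps/2}$, uniform in $x\in J(f)$, where $K>0$ depends only on $m$, $P(f,\tvarphi)$, and $\|\tvarphi\|_\infty$. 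For $n$ large enough this exceeds $e^{-\eps(n+m)}$, and combining with the first paragraph completes the proof.

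The main obstacle is genuine uniformity in $x\in J(f)$. This is supplied by topological exactness, which yields a single integer $m$ valid for every $x\in J(f)$; the resulting ``gluing cost'' $S_m(\tvarphi)(z)$ is then controlled by $m\|\tvarphi\|_\infty$ independently of~$x$ and~$z$. Everything else is routine bookkeeping between $\varphi$ and $\tvarphi$ and between $\cL$ and its normalization $\hcL$.
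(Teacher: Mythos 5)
Your proof is correct and takes essentially the same approach as the paper: reduce to $\tvarphi$ via Lemma~\ref{l:onetime}, invoke Lemma~\ref{l:diffeomorphic pressure} at a base point whose iterated preimages are non-critical, and use topological exactness to propagate the local estimate uniformly over $J(f)$. The bookkeeping differs only cosmetically (you index by $n+m$ and sum directly over $\fD_n$, whereas the paper splits $n=(n-n_1)+n_1$ and first derives an intermediate lower bound for $\cL_{\tvarphi}^{n-n_1}(\1)$ at points of $B(x_0,\delta)\cap J(f)$ before composing), and your brief justification for the existence of $x_0$ really needs $J(f)$ uncountable rather than merely infinite, but that follows since topological exactness with $\#J(f)\ge 2$ makes $J(f)$ perfect.
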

\begin{proof}
Let $C>1$ be the constant given by Lemma~\ref{l:onetime}. 
Since~$\varphi$ is hyperbolic for~$f$, there is an integer~$N \ge 1$ such that the function~$\tvarphi \= \frac{1}{N} S_N(\varphi)$ satisfies~$\sup_{J(f)} \tvarphi < P(f, \varphi)$.
By part~$1$ of Lemma~\ref{l:onetime}, the function~$\tvarphi$ is
H{\"o}lder continuous and
$$ P(f, \tvarphi)
=
P(f, \varphi)
>
\sup_{J(f)}\tvarphi. $$
In view of part~$2$ of Lemma~~\ref{l:onetime}, to complete the proof of the lemma it suffices to prove that for every $\eps>0$ there is $N_0>0$ such that for every $n\ge N_0$ we have
$$ \inf_{J(f)}\hcL_{\tvarphi}^n(\1)\ge \exp(C)\exp(-\eps n). $$
Let~$x_0$ be a point of $J(f)$ that has infinitely many non-critical preimages. 
Let~$\delta>0$ and for each integer~$n \ge 1$, let~$\fD_n$ be as in
Lemma~\ref{l:diffeomorphic pressure} with~$\varepsilon$ replaced by~$\varepsilon/2$.
Then there is $n_0\ge 1$ such that for every integer $n\ge n_0$ we have
$$ \frac{1}{n} \log \sum_{W \in \fD_n} \inf_{W \cap J(f)} \exp \left(
  S_n(\tvarphi) \right)
\ge
P(f, \tvarphi)-\eps/2. $$
This implies for each~$n\ge n_0$ and  every~$x^*$ in~$B(x_0, \delta)\cap J(f)$, we have
\begin{equation}
\label{e:nb}
\cL_{\tvarphi}^n(\1)(x^*)
\ge
\exp \left( n \left( P(f,\tvarphi) - \eps/2 \right) \right).
\end{equation}
On the other hand, since $f$ is topologically exact on $J(f)$, there is $n_1\ge 1$ such that~$f^{n_1}(B(x_0,\delta))\supset J(f)$.
Given~$x$ in~$J(f)$, let~$x'$ in~$B(x_0, \delta)\cap J(f)$ be such that $f^{n_1}(x')=x$.
It follows that for every $n\ge n_1+n_0$
\begin{align*}
\cL_{\tvarphi}^n(\1)(x)
& =
\sum_{y\in f^{-n}(x)}\exp\left(S_n(\tvarphi)(y)\right)
\\ & =
\sum_{y'\in f^{-n_1}(x)}\sum_{y\in
  f^{-(n-n_1)}(y')}\exp\left(S_{n-n_1}(\tvarphi)(y)+S_{n_1}(\tvarphi)(y')\right)
\\ & \ge
\exp \left( S_{n_1}(\tvarphi)(x') \right) \cL_{\tvarphi}^{n-n_1}(\1)(x')
\\ & \ge
\exp\left(n_1\inf_{J(f)}\tvarphi\right) \cL_{\tvarphi}^{n-n_1}(\1)(x')
\\ & \ge
\exp\left(n_1\inf_{J(f)}\tvarphi\right)
\exp \left((n-n_1) \left( P(f,\tvarphi) - \eps/2 \right) \right).
\end{align*}
Let $N_@\ge 0$ be such that
$$ \exp\left(n_1 \inf_{J(f)}\tvarphi\right)
\ge
\exp(C) \exp\left(n_1 P(f,\tvarphi)-(\eps N_@)/2\right). $$
Then for every~$x$ in~$J(f)$ and every integer $n \ge \max \{ n_1 + n_0, N_@ \}$, we have
\begin{equation*}
\begin{split}
\cL_{\tvarphi}^n(\1)(x)
& \ge
\exp(C) \exp\left(n_1 P(f,\tvarphi)-(\eps N_@)/2 + (n-n_1) \left( P(f,\tvarphi) - \eps/2 \right) \right)
\\ & \ge
\exp(C)\exp\left(n \left( P(f,\tvarphi)-\eps \right) + (\varepsilon /
  2)(n + n_1 - N_@) \right)
\\ & \ge
\exp(C)\exp\left(n \left( P(f,\tvarphi)-\eps \right)\right).
\end{split}
\end{equation*}
This proves the desired inequality with $N_0 = \max \{n_1 + n_0, N_@ \}$, and so the proof of the lemma is complete.
\end{proof}

In order to complete the proof of Proposition~\ref{p:tree pressure},  we recall the definition of topological pressure using ``$(n,\varepsilon)$\nobreakdash-separated sets.''
Let $(X, \dist)$ be a compact metric space and let $T: X \to X$ be a continuous map.
For each integer~$n \ge 1$ the function~$\dist_n : X \times X \to \R$ defined by
$$ \dist_n(x,y)
\=
\max \left\{ \dist(T^i(x),T^i(y)) : i \in \{ 0,1,\cdots, n-1 \} \right\}, $$
is a metric on~$X$.
Note that $\dist_1 = \dist$.
For~$\varepsilon > 0$, and an integer~$n \ge 1$, a pair of points~$x$ and~$x'$ of~$X$ are \emph{$(n,\varepsilon)$\nobreakdash-close} if $\dist_n(x,y)< \varepsilon$.
Moreover, a subset~$F$ of~$X$ is \emph{$(n,\varepsilon)$\nobreakdash-separated}, if it does not contain~$2$ distinct points that are~$(n, \varepsilon)$\nobreakdash-close.
For a continuous function~$\phi : X \to \R$, the pressure~$P(T, \phi)$, defined in~\eqref{e:variational principle}, satisfies
\begin{equation}
\label{e:sep}
 P(T,\phi)
=
\lim_{\varepsilon\to 0}\limsup_{n\to +\infty}\frac{1}{n}\log \sup_F \left(\sum_{y\in F} \exp(S_n(\phi(y)))\right),
\end{equation}
where the supremum is taken over all
$(n,\varepsilon)$\nobreakdash-separated subsets~$F$ of~$X$, see for
example~\cite{Kel98} or~\cite{PrzUrb10}.

\begin{proof}[Proof of Proposition~\ref{p:tree pressure}]
Put~$\Crit'(f) \= \Crit(f) \cap J(f)$.

In view of Lemma~\ref{l:obelow} it is enough to show the following: \emph{For every $\eps>0$ there is $N_1>0$ such that for every integer $n\ge N_1$, we have}
$$ \sup_{J(f)}\hcL_\varphi^n(\1) \le \exp\left(\eps n\right). $$
The proof of this fact can be adapted from~\cite[Lemma~$4$]{Prz90}.
The details are as follow.
By~\eqref{e:sep} there is~$\eps_0>0$ such that for every~$\eps_*$ in $(0, \eps_0)$ there is an integer $N(\eps_*)\ge 1$ such that for every $n\ge N(\eps_*)$ and every $(n,\varepsilon_*)$\nobreakdash-separated subset~$\sN$ of~$J(f)$, we have
 \begin{equation}
\label{e:upp}
 \sum_{y\in \sN} \exp \left(S_n(\varphi)(y)\right)\le \exp\left(n(P(f,\varphi)+\eps/2)\right).
 \end{equation}

Put~$N\= \# \Crit(f) + 1$, and let~$L \ge 2 \# \Crit(f)$ be large enough so
that
$$ N^{2(\#\Crit'(f))^2 / L}
\le
\exp\left(\eps/2\right). $$
Note that a point in the domain of~$f$ can have at most~$N$ preimages by~$f$.
On the other hand, there is~$\varepsilon'$ in $(0,\eps_*)$ such that
for every~$c$ in~$\Crit'(f)$, every~$x$ in~$B(c, 2\varepsilon')$, and every~$j$ in~$\{1,2,\ldots, L\}$, we have either
$$ f^j(x) \not \in  B(\Crit(f), 2 \varepsilon')
\text{ or }
f^j(c) \in \Crit(f). $$
Since no critical point of~$f$ in~$J(f)$ is periodic, for each critical point~$c$ of~$f$ in~$J(f)$ there are at most~$\# \Crit'(f) - 1$ integers~$j \ge 1$ such that~$f^j(c)$ is in~$\Crit(f)$.
Reducing~$\varepsilon'$ if necessary, assume that for every~$x$
in~$J(f)$  satisfying~$\dist (x, \Crit'(f))\ge 2\varepsilon'$ the map~$f$ is injective on~$B(x, \varepsilon')$.

Given an integer $n \ge 1$ and a point~$x$ of~$J(f)$, denote by~$P_{(n, \varepsilon')}(x)$ the number of points in $f^{-n}(f^n(x))$ that are $(n, \varepsilon')$\nobreakdash-close to~$x$.
Note that~$P_{(n, \varepsilon')}(x) \ge 1$ and put
$$ P_{(n, \varepsilon')}\=\sup_{x \in J(f)}P_{(n, \varepsilon')}(x). $$
Let~$x_0$ be a point of~$J(f)$. 
Then for every integer $n \ge 1$, the set $f^{-n}(x_0)$ can be partitioned into~$P_{(n, \varepsilon')}$ sets, each of which is $(n, \varepsilon')$\nobreakdash-separated.
So there is a $(n, \varepsilon')$\nobreakdash-separated subset~$\sN$ of~$f^{-n}(x_0)$ such that
$$ \sum_{y\in \sN} \exp \left(S_n(\varphi)(y)\right)
\ge
\frac{1}{P_{(n, \varepsilon')}}\sum_{y\in f^{-n}(x_0)} \exp \left(S_n(\varphi)(y)\right). $$ 
Together with~\eqref{e:upp}, for every $n\ge N(\eps')$ we have $$\sum_{y\in f^{-n}(x_0)} \exp \left(S_n(\varphi)(y)\right)\le P_{(n, \varepsilon')}\exp\left(n(P(f,\varphi)+\eps/2)\right). $$
Thus, to complete the proof of the proposition it suffices to prove there is an integer $N'\ge 1$ such that for every $n\ge N'$ we have $P_{(n, \varepsilon')}\le \exp(n\eps/2). $

Fix a point~$x_0$ in~$J(f)$, and for each point~$x$ of~$J(f)$ put $P_{(0, \varepsilon')}(x)=1$.
Note that if for some $n\ge 2$ a point~$y$ in~$J(f)$ and a point~$y'$ in~$f^{-n}(f^n(x_0))$ are $(n, \varepsilon')$\nobreakdash-close, then~$f(y)$ and~$f(y')$ are~$(n - 1, \varepsilon')$\nobreakdash-close.
Therefore we have
$$ P_{(n, \varepsilon')}(x_0)
\le
N \cdot P_{(n-1, \varepsilon')}(f(x_0)), $$
and when $f$ is injective on~$B(x_0, \varepsilon')$, we have $P_{(n, \varepsilon')}(x_0)\le P_{(n-1, \varepsilon')}(f(x_0))$.
In particular,  when $\dist (x_0, \Crit'(f))\ge 2\varepsilon'$ we have $P_{(n, \varepsilon')}(x_0)\le P_{(n-1, \varepsilon')}(f(x_0))$.
By induction and the definition of~$\varepsilon'$ we obtain
$$ P_{(n, \varepsilon')}(x_0)
\le
N^{\# \Crit'(f) \left( \frac{\# \Crit'(f)}{L} n + \# \Crit'(f)
  \right)}
\le
N^{2 n (\# \Crit'(f))^2 /L}. $$
Since~$x_0$ is an arbitrary point in~$J(f)$, for every $n\ge L$ we have
$$ P_{(n, \varepsilon')}
\le
N^{2n(\#\Crit'(f))^2/L}
\le
\exp\left(\eps n/2\right). $$
This completes the proof of the proposition.
\end{proof}

\section{Conformal measure}
\label{s:conformal measure}
The main goal of this section is to prove Theorem~\ref{t:conformal
  measure}, on the existence of a conformal measure.
We use a general method of construction conformal measures, usually known as the ``Patterson-Sullivan method''.
For rational maps, this method was introduced by Sullivan in~\cite{Sul83}, see also~\cite{DenUrb91f} and~\cite[\S$12$]{PrzUrb10}.

We proceed to describe a preliminary fact needed in the construction.
Given a sequence $(a_n)_{n=1}^{+\infty}$ of real numbers such that
$$c \= \limsup_{n\to +\infty} \frac{a_n}{n} < +\infty, $$
the number $c$ is called the \emph{transition parameter of $(a_n)_{n=1}^{+\infty}$}. 
It is uniquely determined by the property that the series
$$ \sum_{n=1}^{+\infty}\exp(a_n-ns) $$
converges for $s>c$ and diverges for $s<c$. 
For $s=c$ the sum may converge or diverge.

For a proof of the following simple fact, see for example~\cite[Lemma~$12.1.2$]{PrzUrb10}.
\begin{lemm}
\label{t:transition parameter}
Let $(a_n)_{n=1}^{+\infty}$ be a sequence of real numbers having transition parameter~$c$.
Then there is a sequence  $(b_n)_{n=1}^{+\infty}$ of positive real numbers such that
$$ \sum_{n=1}^{+\infty}b_n\exp(a_n-ns)
\begin{cases}
< +\infty & \text{if } s>c \\
= +\infty & \text{if } s\le c,
\end{cases} $$
and $\lim_{n\to +\infty}b_n/b_{n+1}=1$.
\end{lemm}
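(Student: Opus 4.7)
The argument splits into two cases based on whether the series $\sum_n \exp(a_n - nc)$ converges. Writing $\alpha_n \= \exp(a_n - nc)$, the hypothesis that $c$ is the transition parameter translates to: $\sum_n \alpha_n e^{-n(s-c)}$ converges for $s > c$ and diverges for $s < c$, while the optimality of $c$ yields $\limsup_n n^{-1}\log \alpha_n = 0$.

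In the easy case $\sum_n \alpha_n = +\infty$, the choice $b_n \= 1$ works directly: the ratio condition is trivial, convergence for $s > c$ is the definition of transition parameter, and divergence for $s \le c$ follows from the divergence at $s = c$ together with the monotonicity of $s \mapsto e^{-ns}$.

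In the hard case $\sum_n \alpha_n < +\infty$, set $U_n \= \sum_{m \ge n}\alpha_m$, so $U_n \searrow 0$. My plan is to construct $b_n$ by log-linear interpolation of well-chosen anchor values. Pick anchors $N_1 < N_2 < \cdots$ so that the block masses $m_k \= U_{N_k} - U_{N_{k+1}}$ are of order $U_{N_1}/(k(k+1))$, equivalently $U_{N_k} \asymp U_{N_1}/k$. Set $b_{N_k}\= k$ and extend $b_n$ on the $k$-th block $[N_k, N_{k+1}]$ by
$$
\log b_n \= \log k + \log\!\left(\tfrac{k+1}{k}\right)\cdot\frac{n-N_k}{N_{k+1}-N_k}.
$$
Since $b_n \ge k$ on this block,
$$
\sum_n b_n \alpha_n \;\ge\; \sum_k k\cdot m_k \;\asymp\; U_{N_1}\sum_k \frac{1}{k+1} \;=\; +\infty,
$$
giving divergence at $s = c$ (and hence at $s < c$ by monotonicity). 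For the ratio, on block $k$ one has $\log(b_{n+1}/b_n) = \log((k+1)/k)/(N_{k+1}-N_k) \to 0$, since $\log((k+1)/k) \to 0$ and $N_{k+1}-N_k \ge 1$, giving $b_n/b_{n+1}\to 1$. Finally, $b_n \le k+1 \le n+1$ on block $k$, so $\log b_n = o(n)$; together with the transition parameter property, taking $s'\=(s+c)/2$ for $s > c$ gives
$$
\sum_n b_n\alpha_n e^{-n(s-c)} \;\le\; C\sum_n \alpha_n e^{-n(s'-c)} \;<\; +\infty.
$$

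The main technical obstacle is that the block masses $m_k$ cannot be prescribed exactly; in particular, the subexponential condition $\limsup_n n^{-1}\log \alpha_n = 0$ still permits isolated large terms $\alpha_n$ that cause abrupt drops in $U_n$, potentially skipping several target levels at once and creating degenerate (empty) blocks. Handling this requires a careful choice, for instance $N_k\=\min\{n: U_n \le U_{N_1}/k\}$ combined with skipping degenerate levels and reindexing; one must then verify, using the subexponential bound to control the size of the index jumps, that the log-linear interpolation across skipped thresholds still produces $b_n/b_{n+1} \to 1$.
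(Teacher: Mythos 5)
Your two-case split and the easy case ($\sum_n\alpha_n = +\infty$, take $b_n \equiv 1$) are correct, and the reduction of the hard case to control on the tails $U_n$ is sensible. But the fix you sketch does not close the gap you flag, and the subexponential hypothesis is \emph{not} strong enough to save the anchor/interpolation scheme. The essential problem is that your construction pins $b_n \approx U_{N_1}/U_n$, hence $b_{n+1}/b_n \approx U_n/U_{n+1}$, and nothing in the hypotheses prevents $U_n/U_{n+1}$ from being bounded away from $1$ infinitely often. Concretely, pick $a_j \to \infty$ rapidly (say $a_j = 10^j$), let $\alpha_n$ be negligibly small except at $n = a_j, a_j+1, a_j+2$, where $\alpha_n$ is chosen so that $U_n$ drops by a factor of $2$ at each of these three positions, so that $U_{a_j + \ell} \asymp 8^{-j}2^{-\ell}$ for $\ell = 0,1,2,3$. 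One checks $\limsup_n n^{-1}\log\alpha_n = 0$, $\sum_n\alpha_n < \infty$, and $\sum_n\alpha_n e^{n\delta} = \infty$ for every $\delta > 0$; yet for every $j$ the three distinct reindexed anchors inside cluster $j$ are one index apart with $b$-values differing by a factor of $\approx 2$, so $b_{n+1}/b_n \not\to 1$ no matter how you reindex. Two further points: the same mechanism can break $\log b_n = o(n)$ (take $\alpha_n$ negligible away from a very sparse sequence $n_j$ with $\alpha_{n_j} = e^{-n_j/j}$ and $n_j/n_{j-1} \to\infty$, which forces $-\log U_{n_{j-1}+1} \gg n_{j-1}$), so your convergence bound for $s > c$ is also lost after reindexing; and your pointwise estimate $m_k \asymp U_{N_1}/(k(k+1))$ is not justified, since $U_{N_k}$ is only pinned to $(U_{N_1}/(k+1), U_{N_1}/k]$ and the difference $m_k$ can be first-order small — though divergence of $\sum_k k\, m_k$ can still be rescued by Abel summation.

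The robust construction (this is the standard Patterson argument, cf.\ \cite[Lemma~$12.1.2$]{PrzUrb10}) prescribes the slow \emph{growth rate} of $b_n$ on each block and lets a mass condition determine the block lengths, rather than the other way around. Fix $\epsilon_k \downarrow 0$, say $\epsilon_k = 1/k$. Set $N_0 = 0$, $b_{N_0} \= 1$, and inductively, using that $\sum_n \alpha_n e^{n\delta}$ diverges for every $\delta > 0$ (the divergence side of the transition-parameter hypothesis), take $N_k$ to be the least index with $\sum_{n=N_{k-1}+1}^{N_k}\alpha_n e^{\epsilon_k(n-N_{k-1})}\ge 1$, and on that block set $b_n \= b_{N_{k-1}}e^{\epsilon_k(n-N_{k-1})}$. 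Then $b_{n+1}/b_n = e^{\epsilon_k}\to 1$; each block contributes at least $b_{N_{k-1}}\ge 1$ to $\sum_n b_n\alpha_n$, giving divergence at $s = c$ and hence for all $s \le c$; and splitting $\log b_n = \sum_{j<k}\epsilon_j(N_j - N_{j-1}) + \epsilon_k(n-N_{k-1}) \le N_{j_0} + \epsilon_{j_0+1}n$ for any fixed $j_0 < k$ gives $\log b_n = o(n)$, after which convergence for $s>c$ follows exactly by your $s' = (s+c)/2$ trick. This removes both obstacles at once: an $\alpha$-rich short stretch ends a block quickly, but the prescribed growth rate on it is still slow, and the cumulative growth of $b_n$ is controlled by the $\epsilon_j$'s rather than by $U_n$.
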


In view of Corollary~\ref{c:tree pressure},
Theorem~\ref{t:conformal measure} is a direct consequence of the following proposition. 
A \emph{turning point} of a multimodal map~$f : I \to I$ is a point
in~$I$ at which~$f$ is not locally injective.

\begin{prop}
\label{p:conformal measure}
Let~$f : I \to I$ be a multimodal map, let~$X$ be a compact subset
of~$I$ that contains at least~$2$ points and satisfies~$f^{-1}(X)
\subset X$, and let $\varphi: X \to \R$ be a continuous
function. 
Assume that~$f$ has no periodic turning point in~$X$, and that there
is a point~$x_0$ of~$X$ and an integer~$N\ge 1$ such that the number
$$ P_{x_0}
\=
\limsup_{n \to + \infty} \frac{1}{n} \log \sum_{y\in f^{-n}(x_0)} \exp(S_n(\varphi)(y)) $$
satisfies~$P_{x_0}> \sup_X \frac{1}{N} S_N(\varphi)$.
Then there is an atom-free $\exp(P_{x_0}-\varphi)$-conformal measure
for~$f$.
If in addition~$f$ is topologically exact on~$X$, then the support of this conformal measure is equal to~$X$.
\end{prop}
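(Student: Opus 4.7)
The plan is to carry out the Patterson-Sullivan construction. Set $a_n \= \log \sum_{y \in f^{-n}(x_0)} \exp(S_n(\varphi)(y))$, so $(a_n)$ has transition parameter $P_{x_0}$. Lemma~\ref{t:transition parameter} furnishes positive weights $b_n$ with $b_{n+1}/b_n \to 1$ such that $M(s) \= \sum_{n \ge 1} b_n \exp(a_n - ns)$ converges exactly when $s > P_{x_0}$ and diverges at $s = P_{x_0}$. For $s > P_{x_0}$, form the atomic probability measure
$$ \nu_s \= \frac{1}{M(s)} \sum_{n \ge 1} b_n \sum_{y \in f^{-n}(x_0)} \exp(S_n(\varphi)(y) - ns)\, \delta_y, $$
which is supported on $X$ since $f^{-1}(X) \subset X$. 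Let $\mu$ be a weak$^*$ accumulation point of $\nu_s$ as $s \searrow P_{x_0}$.

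To establish that $\mu$ is $\exp(P_{x_0}-\varphi)$-conformal, I would verify the Ruelle dual identity $\int \cL_\varphi \psi\, d\mu = \exp(P_{x_0}) \int \psi\, d\mu$ for every continuous $\psi : X \to \C$; a standard measure-theoretic unpacking then recovers the conformality relation on injective Borel sets. Re-indexing the defining series after the substitution $S_{n+1}(\varphi)(w) = \varphi(w) + S_n(\varphi)(f(w))$ yields
$$ \int \cL_\varphi \psi\, d\nu_s - \exp(s) \int \psi\, d\nu_s = \frac{\exp(s)}{M(s)}\left(-b_1 T_1(s) + \sum_{m \ge 2}(b_{m-1}-b_m)\, T_m(s) \right), $$
where $T_m(s) \= \sum_{w \in f^{-m}(x_0)} \psi(w)\exp(S_m(\varphi)(w)-ms)$ satisfies $|T_m(s)|\le \|\psi\|_\infty \exp(a_m - ms)$. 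Splitting the $m$-sum at a threshold $m_0$ chosen so that $|b_{m-1}/b_m - 1| < \eps$ for $m \ge m_0$, and using $M(s) \to \infty$ to kill the finite head, shows this difference is $O(\eps)$ as $s \searrow P_{x_0}$. Since $\eps$ is arbitrary and $\exp(s) \to \exp(P_{x_0})$, the identity passes to the limit $\mu$.

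It remains to prove atom-freeness and the support statement. Applying the conformality relation to the singleton $A = \{q\}$, which is trivially injective, gives the key identity $\mu(\{f^n(q)\}) = \exp(n P_{x_0} - S_n(\varphi)(q)) \mu(\{q\})$ for every $n \ge 0$. If $\mu(\{q\}) > 0$, then every iterate $f^n(q)$ is an atom of $\mu$, so compactness of $X$ and finiteness of $\mu$ force the forward orbit of $q$ to be finite, i.e.\ $q$ is preperiodic; replacing $q$ by a suitable forward iterate, assume $q$ has period $k$. Then $S_k(\varphi)(q) = k P_{x_0}$, and decomposing
$$ \frac{1}{Nk} S_{Nk}(\varphi)(q) = \frac{1}{k}\sum_{j=0}^{k-1} \frac{1}{N} S_N(\varphi)(f^{jN}(q)) \le \sup_X \frac{1}{N}S_N(\varphi) $$
contradicts the hypothesis $P_{x_0} > \sup_X N^{-1} S_N(\varphi)$. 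The no-periodic-turning-point assumption ensures that no singleton is a non-removable multiplicity point, so this chain of singleton identities is internally consistent. For the support statement, under topological exactness any relatively open $U \subset X$ has $f^n(U) = X$ for some $n$; partitioning $U$ into finitely many Borel pieces on which $f^n$ is injective and iterating the conformality relation yields a positive lower bound for $\mu(U)$ depending only on $n$, $\sup_X |\varphi|$, and $P_{x_0}$.

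The main obstacle I anticipate is the convergence computation in the second paragraph: the index shift in the Patterson-Sullivan series has to be absorbed by the ratio $b_{m-1}/b_m \to 1$, and the tail-versus-head split must be calibrated against the sub-geometric divergence of $M(s)$ at $s = P_{x_0}$. All the other steps, including the atom-freeness argument, reduce to routine iteration of the conformality identity once the base conformality relation is in hand.
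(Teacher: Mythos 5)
Your construction of $\mu$ is the same Patterson--Sullivan construction as in the paper, and your re-indexing calculation is correct for the finite-parameter measures $\nu_s$. The gap is in the sentence ``the identity passes to the limit $\mu$.'' Weak$^*$ convergence $\nu_s \to \mu$ only yields $\int g\,d\nu_s \to \int g\,d\mu$ for \emph{continuous} $g$, and the function $\cL_\varphi\psi$ is typically \emph{not} continuous: at the image $f(c)$ of a turning point $c$ the number of preimages jumps, so $\cL_\varphi\psi$ has a jump discontinuity there (the one-sided limit on the covered side picks up $2\exp(\varphi(c))\psi(c)$ while the value at $f(c)$ itself picks up only $\exp(\varphi(c))\psi(c)$). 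One can repair this with Portmanteau provided $\mu$ charges none of these finitely many discontinuity points --- but that is exactly the atom-freeness you are trying to prove. Your proof is therefore circular: you derive full conformality (including on singletons at turning points) assuming in effect that $\mu$ is already atom-free there, and then use that conformality to prove atom-freeness.

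A symptom of this circularity is that your atom-freeness argument never uses the hypothesis that $f$ has no periodic turning point in $X$ (the remark about ``non-removable multiplicity point'' is not an argument). That hypothesis is essential: if a turning point $c$ were periodic of period $k$ and $\mu(\{c\}) > 0$, the construction only forces $2\mu(\{f(c)\}) \ge \mu(\{c\})\exp(P_{x_0} - \varphi(c))$, not the equality you invoke, and the accumulated factors of $\tfrac{1}{2}$ along the periodic orbit could balance the exponential growth from $P_{x_0} > \sup_X \tfrac{1}{N}S_N(\varphi)$, so no contradiction would arise. The paper breaks the circle differently: it first establishes the exact conformality relation only for Borel sets whose closure avoids $\Tur(f) \cup \partial I$ (where the boundary-measure-zero argument does go through), then proves the \emph{inequalities}
$$2\mu(\{f(c)\}) \ge \mu(\{c\})\exp(P_{x_0}-\varphi(c)), \qquad \mu(\{f(x)\}) \ge \mu(\{x\})\exp(P_{x_0}-\varphi(x))$$
at turning points and at $\partial I$ by splitting small neighborhoods of $c$ into left and right halves, and only then proves atom-freeness: the ``no periodic turning point'' hypothesis is what lets one push any putative atom forward to a point $y_*$ whose whole forward orbit avoids $\Tur(f)$, so that the exact exponential growth $\mu(\{f^{kN}(y_*)\}) \ge \exp(k(NP_{x_0} - \sup_X S_N(\varphi)))\mu(\{y_*\}) \to +\infty$ applies from that point on. Full conformality is then deduced \emph{after} atom-freeness, since an atom-free measure cannot see $\Tur(f) \cup \partial I$. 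To fix your write-up you would need to reproduce this two-step structure; the direct Ruelle-dual passage to the limit does not work.
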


\begin{proof}
In part~$1$ below  we construct a measure that is conformal outside
$\Tur(f) \cup \partial I$.
In parts~$2$ and~$3$ we prove that this measure has no atoms, and in part~$4$ we conclude the proof of the proposition.

Denote by~$\Tur(f)$ the set of turning points of~$f$.

\partn{1} \emph{Construction of the measure~$\mu$.} For each integer $n\ge 1$ put
$$ a_n \= \log \sum_{y\in f^{-n}(x_0)} \exp(S_n(\varphi)(y)). $$
The hypotheses imply that~$P_{x_0}$ is the transition parameter of~$(a_n)_{n=1}^{+\infty}$. 
Let~$(b_n)_{n=1}^{+\infty}$ be given by Lemma~\ref{t:transition parameter}, and for each real number~$s> P_{x_0}$ define
$$ M_s
\=
\sum_{n=1}^{+\infty}b_n\exp\left(a_n-ns\right), $$
and the probability  measure
\begin{equation}
\label{e:normalizedmeasure}
m_s\=\frac{1}{M_s}\sum_{n=1}^{+\infty}\sum_{y\in f^{-n}(x_0)}b_n\exp\left(S_n(\varphi)(y)-ns\right)\delta_y.
\end{equation}
Let~$\mu$ be a weak$^*$ accumulation point, as $s\to P_{x_0}^{+}$, of the measures $\{m_s: s>P_{x_0}\}$ defined by~\eqref{e:normalizedmeasure}. 
Since~$X$ is compact and~$f^{-1}(X) \subset X$, the support of~$\mu$
is contained in~$X$.

Let~$A$ be a Borel subset of~$I$ on which~$f$ is injective. 
Using~\eqref{e:normalizedmeasure} and $f^{-1}(f^{-n}(x_0))=f^{-(n+1)}(x_0)$, for each integer $n\ge 1$ we have
\begin{align*}
m_s(f(A))
&=
\frac{1}{M_s}\sum_{n=1}^{+\infty}\sum_{y\in f^{-n}(x_0)\cap f(A)}b_n\exp\left(S_n(\varphi)(y)-ns\right)
\\ & =
\frac{1}{M_s}\sum_{n=1}^{+\infty}\sum_{z\in A\cap f^{-(n+1)}(x_0)}b_n\exp\left(S_n(\varphi)(f(z))-ns\right).
\end{align*}
It follows that
\begin{align*}
\Delta_A(s)
& \=
\left|m_s(f(A))-\int_A \exp\left(P_{x_0}-\varphi\right) \ d m_s \right|
\\ & =
\left|\frac{1}{M_s}\sum_{n=1}^{+\infty}\sum_{y\in A\cap f^{-(n+1)}(x_0)}b_n\exp\left(S_{n}(\varphi)(f(y))-ns\right)
\right. \\ & \quad \left.
-\frac{1}{M_s}\sum_{n=1}^{+\infty}\sum_{y\in A\cap f^{-n}(x_0)} b_n\exp\left(S_{n}(\varphi)(y)-ns\right)\exp\left(P_{x_0}-\varphi(y)\right)
\right|
\\ & =
\frac{1}{M_s}\left|\sum_{n=1}^{+\infty}\left(\left(b_n-b_{n+1}\exp\left(P_{x_0}-s\right)\right)\sum_{y\in A\cap f^{-(n+1)}(x_0)}\exp\left(S_{n}(\varphi)(f(y))-ns\right)\right)
\right. \\ & \quad \left.
- b_1\sum_{y\in A\cap f^{-1}(x_0)}\exp\left(P_{x_0}-s\right) \right|
\\ & \le
\frac{1}{M_s}\sum_{n=1}^{+\infty}\left(\left|1-\frac{b_{n+1}}{b_{n}} \exp\left(P_{x_0}-s\right)\right| b_{n}\sum_{y'\in f^{-n}(x_0)}\sum_{y\in A\cap f^{-1}(y')} \exp\left(S_{n}(\varphi)(y')-ns\right)\right)
\\ & \quad
+ \frac{1}{M_s}b_1\exp\left(P_{x_0}-s\right)\left(\#(A\cap f^{-1}(x_0))\right).
\end{align*}
Thus, if we put $K\=\sup_{x\in X}\# \left(f^{-1}(x)\right)$ and
\begin{multline*}
\Delta(s)
\=
\frac{K}{M_s}\sum_{n=1}^{+\infty}\left(
  \left|1-\frac{b_{n+1}}{b_{n}}\exp(P_{x_0}-s)\right|
  b_{n}\sum_{y'\in f^{-n}(x_0)} \exp \left( S_{n}(\varphi)(y')-ns
  \right) \right)
\\
+ \frac{K }{M_s}b_1 \exp\left(P_{x_0}-s\right),
\end{multline*}
then for every Borel subset~$A$ of~$I$ on which~$f$ is injective we have
\begin{equation*}
\Delta_A(s)\le \Delta(s).
\end{equation*}
On the other hand, by Lemma~\ref{t:transition parameter} and our hypotheses,  we know that
$$ \lim_{n\to +\infty} b_n/b_{n+1} =1
\text{ and }
\lim_{s\to P_{x_0}^{+}}M_s = +\infty, $$
so we obtain
\begin{equation}
\label{e:uniformlimit}
\lim_{s\to P_{x_0}^{+}}\Delta_A(s)
=
\lim_{s\to P_{x_0}^{+}}\Delta(s)
=
0.
\end{equation}
Thus, if in addition~$\mu(\partial A) = \mu(\partial f(A)) = 0$ and
the closure of~$A$ is contained in~$I \setminus (\Tur(f)\cup \partial
I)$, then we have
\begin{equation}
\label{e:outcrit}
\mu(f(A))
=
\int_{A}\exp(P_{x_0}-\varphi) \ d \mu.
 \end{equation}
By~\cite[Lemma~$12.1.3$]{PrzUrb10} or~\cite[Lemma~$3.3$]{DenUrb91f},
it follows that the equality above holds for every Borel subset~$A$ of~$I \setminus
(\Tur(f)\cup \partial I)$ on which~$f$ is injective.

\partn{2}
\emph{For every point~$c$ in~$\Tur(f)\setminus \partial I$ we have}
\begin{equation}
\label{e:conformaloncriticalpoints}
 2\mu(\{f(c)\})\ge \mu(\{c\})\exp(P_{x_0}-\varphi(c)),
 \end{equation}
\emph{and for every~$x$ in~$\partial I$ we have}
 \begin{equation}
\label{e:conformalonboundary}
 \mu(\{f(x)\})\ge \mu(\{x\})\exp(P_{x_0}-\varphi(x)).
\end{equation}

First we prove inequality~\eqref{e:conformaloncriticalpoints}. 
Let~$c$ be in~$\Tur(f)\setminus \partial I$, and let $(C_n)_{
  n=1}^{+\infty}$ be a sequence of compact neighborhoods of~$c$ in~$I$
that decreases~$c$, and so that for each~$n$ the map~$f$ is injective on each connected component of $C_n\setminus \{c\}$ and we have~$\mu(\partial
C_n)=0$. 
Put
$$ C_n^{-} \= C_n\cap (- \infty, c]
\text{ and }
C_n^{+} \= C_n\cap  [c, + \infty). $$
From~\eqref{e:uniformlimit} we obtain
$$ \lim_{s\to P_{x_0}^{+} }\left|m_s(f(C_n^{+})) - \int_{C_n^{+}} \exp(P_{x_0}-\varphi) \ d m_s\right|=0, $$
and
$$ \lim_{s\to P_{x_0}^{+}}\left| m_s(f(C_n^{-}))-\int_{C_n^{-}} \exp(P_{x_0}-\varphi) \ d m_s\right|
=
0. $$
By  the construction of~$m_s$, we have $ m_s(\{c\})\to 0$ as $s\to P_{x_0}^{+}$.
On the other hand, note that~$f(C_n^{-})$ and~$f(C_n^{+})$ are compact.
Thus, if we let $(s_j)_{j=1}^{+\infty}$ be a sequence in $(P_{x_0}, +\infty)$ such that $s_j\to P_{x_0}$ and  $m_{s_j}\to \mu$ as $j\to +\infty$, then
 \begin{align*}
2 \mu(f(C_n))
& \ge
\mu(f(C_n^{+}))+\mu(f(C_n^{-}))
\\ & \ge
\limsup_{j\to +\infty} m_{s_j}(f(C_n^{+}))+\limsup_{j\to +\infty} m_{s_j}(f(C_n^{-}))
\\ & =
\limsup_{j\to +\infty }\int_{C_n^{+}} \exp\left(P_{x_0}-\varphi\right)
\ d m_{s_j} +
\limsup_{j\to +\infty}\int_{C_n^{-}} \exp\left(P_{x_0}-\varphi\right) \ d m_{s_j}
\\ & \ge
\liminf_{j\to +\infty} \left( \int_{C_n} \exp\left(P_{x_0}-\varphi\right) \ d
m_{s_j}
+
\int_{\{c\}} \exp\left(P_{x_0}-\varphi\right) \ d m_{s_j} \right)
\\ & \ge
\liminf_{j\to +\infty}\int_{C_n} \exp\left(P_{x_0}-\varphi\right) \ d m_{s_j}
\\ &  \ge
\int_{C_n} \exp\left(P_{x_0}-\varphi\right) \ d \mu
\\ & \ge
\mu(\{c\})\exp(P_{x_0}-\varphi(c)).
\end{align*}
Letting $n\to +\infty$, we obtain~\eqref{e:conformaloncriticalpoints}.

To prove~\eqref{e:conformalonboundary}, let~$x$ be in~$\partial I$,
and let~$(B_n)_{ n=1}^{+\infty}$  be a sequence of compact
neighborhoods of~$x$ in~$I$ that decreases to~$x$, and so that for
each~$n$ the map~$f$ is injective on $B_n$ and we have~$\mu(\partial B_n)=0$. 
The argument above gives us that for every integer $n\ge 1$ we have
$$\mu(f(B_n))\ge \mu(\{x\}) \exp(P_{x_0}-\varphi(x)). $$
Letting $n\to +\infty$, we obtain~\eqref{e:conformalonboundary}.

\partn{3} \emph{$\mu$ has no atoms.}
Assume that there is~$x_*$ in~$I$ such that $\mu(\{x_*\})\neq 0$.
Since by the construction~$\mu$ is supported on~$X$, the point~$x_*$ is in~$X$.
Since~$f$ has no periodic turning point in~$X$, there is an
integer~$n_1\ge 1$ such that the point~$y_*\=f^{n_1}(x_*)$ satisfies for every integer~$n \ge 0$ that~$f^n(y_*)\not\in \Tur(f)$.
Applying~\eqref{e:outcrit}, \eqref{e:conformaloncriticalpoints}, or~\eqref{e:conformalonboundary} repeatedly, we conclude that
 \begin{equation}
\label{e:posi}
 \mu(\{y_*\})=\mu(\{f^{n_1}(x_*)\})\ge \frac{1}{2^{n_1}}\exp(n_1P_{x_0}-S_{n_1}(\varphi)(x_*))\mu(\{x_*\})>0.
 \end{equation}
and that for every integer~$n\ge 1$
\begin{equation}
\label{e:combi}
\mu(\{f^n(y_*)\})\ge \exp(nP_{x_0}-S_n(\varphi)(y_*))\mu(\{y_*\}).
 \end{equation}

On the other hand, by hypothesis there is an integer~$N \ge 1$ such that
\begin{equation}
\label{e:hyper}
N P_{x_0}>\sup_{X} S_N(\varphi).
\end{equation}
Therefore, for each integer $k\ge 1$ we have by~\eqref{e:combi} with $n=kN$,
\begin{align*}
\mu(\{f^{kN}(y_*)\})
& \ge
\exp\left(kNP_{x_0}-S_{kN}(\varphi)(y_*)\right)\mu(\{y_*\})
\\ & \ge
\exp\left(k\left(N P_{x_0}-\sup_{X}S_{N}(\varphi)\right)\right)\mu(\{y_*\}).
\end{align*}
Together with~\eqref{e:posi} and~\eqref{e:hyper}, this implies
$$ \lim_{k\to +\infty}\mu(\{f^{kN}(y_*)\})
=
+ \infty, $$
a contradiction. 
Thus~$\mu$ is atom-free.

\partn{4} \emph{$\mu$ is $\exp(P_{x_0}-\varphi)$-conformal.}
By part~$3$ and~\eqref{e:outcrit}, for each Borel  subset~$A$ of $I$ on which $f$ is injective, we have
 \begin{align*}
\mu(f(A))
&=
\mu(f(A\setminus (\Tur(f)\cup \partial I)))
\\ & =
\int_{A\setminus (\Tur(f)\cup \partial I)}\exp\left(P_{x_0}-\varphi\right) \ d \mu
\\ & =
\int_{A}\exp\left(P_{x_0}-\varphi\right) \ d \mu.
 \end{align*}
Hence,  $\mu$ is $\exp(P_{x_0}-\varphi)$-conformal.

To prove the last statement of the proposition, suppose~$f$ is
topologically exact on~$X$.
Together with~\eqref{e:outcrit}, with~$\exp(P_{x_0} - \varphi) > 0$,
and the fact that~$\mu$ is atom-free, this implies that the support of~$\mu$ is equal to~$X$.
This completes the proof of the proposition.
\end{proof}

\section{Keller spaces and quasi-compactness}
\label{sec:keller}
We start this section by recalling in~\S\ref{ss:keller spaces properties} some properties of the function spaces defined by Keller in~\cite{Kel85}.
In~\S\ref{ss:quasicompactness} we first recall some of the results
of~\cite{Kel85} that we state as Theorem~\ref{t:quasicompactness}, and
then we deduce from them some known consequences
(Corollary~\ref{c:quasicompactness}) that we use in the next section to prove Theorem~\ref{t:spectral gap}.

Throughout this section, fix a compact subset $X$ of $\R$  endowed
with the distance induced by the norm distance on~$\R$, and fix an
atom-free Borel probability measure~$m$ on~$X$.
We consider the equivalence relation on the space of complex valued functions defined on~$X$, defined by agreement on a set of full measure with respect to~$m$.

\subsection{Properties of Keller spaces}
\label{ss:keller spaces properties}
In the following proposition we compare the function spaces introduced by Keller in~\cite{Kel85}, see~\S\ref{ss:keller spaces}, with several other function spaces.

Given~$p\ge 1$ and a function~$h:X \to \C$, put
$$ \Var_p(h)
\=
\sup\left\{\left( \sum_{i=1}^k|h(x_i)-h(x_{i-1})|^p \right)^{\frac{1}{p}}: k \ge 1,  x_0, \ldots, x_k \in X, x_0<\cdots <x_k \right\} $$
and
$$ \|h\|_{\BV_p} \= \Var_p(h)+\|h\|_{\infty}. $$
The function~$h$ is of \emph{bounded~$p$-variation}, if~$\|h\|_{\BV_{p}}< +\infty$.
Let~$\BV_p$ be the space of all bounded $p$-variation functions defined on~$X$.
Then~$\|\cdot\|_{\BV_{p}}$ is a norm on~$\BV_p$, for which~$\BV_p$ is a Banach space.

Given~$\alpha$ in~$(0, 1]$, denote by~$\Hspace^\alpha$ the space of H{\"o}lder continuous functions of exponent~$\alpha$ defined on~$X$ and taking values in~$\C$.
Then~$\| \cdot \|_{\alpha}$, defined in~\S\ref{ss:statements of results}, is a norm on~$\Hspace^{\alpha}$ and~$(\Hspace^{\alpha}, \| \cdot \|_{\alpha})$ is a Banach space.
Note that the definitions immediately imply that for each~$h$ in~$\Hspace^\alpha$, we have
$$ \Var_{1/\alpha}(h)
\le
\left| \sup X - \inf X \right|^\alpha |h|_\alpha, $$
and therefore
\begin{equation}
\label{e:1}
\| h \|_{\BV_{1/\alpha}}
\le
\max \left\{1, \left| \sup X - \inf X \right|^\alpha \right\} \|h\|_\alpha.  
\end{equation}

\begin{prop}
\label{p:keller spaces properties}
Fix~$A > 0$, let~$X$ be a compact subset of~$\R$, and let~$m$ be an atom-free Borel probability measure on~$X$.
Then for each~$\alpha$ in~$(0,1]$, the space~$\Hspace^{\alpha, 1}(m)$ defined in~\S\ref{ss:keller spaces} satisfies the following properties:
\begin{enumerate}
\item[1.]
$(\Hspace^{\alpha,1}(m), \|\cdot\|_{\alpha,1})$ is a Banach space;
\item[2.]
For each function $h:X\to \C$ in~$\BV_{1/\alpha}$, we have $\|h\|_{\alpha,1}\le 2^{\alpha}\|h\|_{\BV_{1/\alpha}}$;
\item[3.]
Each function in~$\Hspace^{\alpha,1}(m)$ is essentially bounded.
In fact, there is a constant $C_*>0$ such that each element~$h$
of~$\Hspace^{\alpha,1}(m)$ satisfies~$\| h \|_{\infty} \le C_* \|h\|_{\alpha,1}$.
\end{enumerate}
\end{prop}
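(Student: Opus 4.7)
For part~$1$, the Banach-space property is exactly the content of the argument Keller gives in~\cite{Kel85}, so I would simply cite that.

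For part~$2$, set $p = 1/\alpha$. Since $\|h\|_1 \le \|h\|_\infty \le \|h\|_{\BV_p}$, the task reduces to showing $\osc_1(h,\eps) \le 2^\alpha \eps^\alpha \Var_p(h)$ for each $\eps \in (0,A]$. The geometric input is the computation
\[
m(\{x \in X : y, y' \in B_d(x,\eps)\}) = \max\{0,\, 2\eps - d(y,y')\} \le 2\eps
\]
for any $y<y'$ in $X$, which is immediate after pushing everything forward by the cumulative distribution function $\pi : X \to [0,1]$, $\pi(x) := m((-\infty,x]\cap X)$: since $m$ is atom-free, $\pi$ is a continuous non-decreasing surjection sending $m$ to Lebesgue measure on $[0,1]$, and the pseudo-distance $d$ to the pull-back of the usual metric. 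The two-point partition gives $\osc(h,\eps,x)^p \le \Var_p(h|_{B_d(x,\eps)})^p$, so by H\"older's inequality on the probability space $(X,m)$,
\[
\osc_1(h,\eps) \le \left( \int_X \Var_p(h|_{B_d(x,\eps)})^p \, dm(x) \right)^{1/p}.
\]
To bound the right-hand side I would introduce the non-decreasing function $V : [0,1] \to \R$ defined by $V(\xi) := \Var_p(h|_{\pi^{-1}([0,\xi])})^p$. The super-additivity of $\Var_p^p$ over disjoint sub-intervals (any partitions of the pieces concatenate to a partition of the union, with the extra cross term non-negative) yields $\Var_p(h|_{B_d(x,\eps)})^p \le V(\pi(x)+\eps) - V(\pi(x)-\eps)$, with the obvious truncation at the endpoints; a single application of Fubini then gives
\[
\int_X \Var_p(h|_{B_d(x,\eps)})^p \, dm(x) \le \int_0^1 \bigl( V(\xi+\eps) - V(\xi-\eps) \bigr) \, d\xi \le 2\eps\, V(1) = 2\eps \Var_p(h)^p.
\]
Hence $\osc_1(h,\eps) \le (2\eps)^{1/p} \Var_p(h) = 2^\alpha \eps^\alpha \Var_{1/\alpha}(h)$, and $\|h\|_{\alpha,1} \le 2^\alpha \|h\|_{\BV_{1/\alpha}}$ follows at once.

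For part~$3$, the plan is a Chebyshev--covering argument. By compactness of $X$, fix a finite cover $\{B_d(x_j^0, A/4)\}_{j=1}^N$ of $X$. Since $\osc_1(h,A) \le A^\alpha |h|_{\alpha,1}$, Chebyshev's inequality applied to $\osc(h,A,\cdot)$ produces, for each $j$, a point $x_j \in B_d(x_j^0, A/4)$ for which $\osc(h,A,x_j)$ is controlled by a constant of order $A^\alpha |h|_{\alpha,1}/m(B_d(x_j^0, A/4))$. By the triangle inequality the enlarged balls $B_d(x_j,A/2)$ still cover $X$, and on each of them the essential oscillation of $h$ is at most $\osc(h,A,x_j)$, since any two of its points also lie in $B_d(x_j,A)$. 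Combining with the trivial bound $\essinf_{B_d(x_j,A/2)} |h| \le \|h\|_1 / m(B_d(x_j,A/2))$ and the elementary lower estimate $m(B_d(x,r)) \ge \min\{r, m(X)\}$ (immediate from the $\pi$-transport), one deduces $\|h\|_\infty \le C_* \|h\|_{\alpha,1}$ with $C_*$ depending only on $A$ and $\alpha$. The main technical point I anticipate is the super-additivity used to introduce $V$ in part~$2$; once it is in place everything else is a routine Fubini and H\"older computation.
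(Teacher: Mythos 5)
Your proposal is correct, but for parts~$2$ and~$3$ it takes a genuinely different route than the paper, so a brief comparison is in order. For part~$2$, the paper establishes the same key inequality $\int_X \osc(h,\eps,x)^p\,dm(x)\le 2\eps\,\Var_p(h)^p$ (Lemma~\ref{l:oscbyvar}), but does so by averaging over a shift parameter: for each $\xi\in[0,2\eps]$ the points $x(\xi+2k\eps)$ give \emph{pairwise disjoint} $d$-balls, so $\sum_k\osc(h,\eps,x(\xi+2k\eps))^p\le\Var_p(h)^p$, and integrating over $\xi$ gives the factor $2\eps$. You instead push forward by the cumulative distribution $\pi$ and dominate $\osc(h,\eps,x)^p$ by the increment $V(\pi(x)+\eps)-V(\pi(x)-\eps)$ of the cumulative variation $V$, using super-additivity of $\Var_p^p$ over ordered disjoint pieces; the same $2\eps$ then comes from telescoping. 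Both are correct and give the same constant $2^\alpha$; yours isolates the monotone function $V$ as the conceptual object, while the paper's keeps the argument entirely on $X$. For part~$3$, the paper runs the \emph{same} averaging trick again: it shifts a chain of overlapping $d$-balls of radius $A$ and centers $2\ell$ apart, bounds $\esssup_{X_1\times X_2}|h(x_1)-h(x_2)|$ by the sum of oscillations along the chain, and averages over the shift $\xi\in[0,2\ell]$, which converts the sum into $\tfrac{1}{2\ell}\int_X\osc(h,A,\cdot)\,dm\le\tfrac{A^\alpha}{2\ell}|h|_{\alpha,1}$. You instead fix one finite subcover by compactness (using that $d$-balls are open because $m$ is atom-free), apply Markov's inequality to $\osc(h,A,\cdot)$ to pick well-behaved centers $x_j$, and feed in the uniform lower bound $m(B_d(x,r))\ge\min\{r,m(X)\}$. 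Both approaches produce a $C_*$ of the same order $\max\{1,A^{\alpha-1}\}$; the paper's averaging sidesteps any explicit lower bound on the measure of individual balls, whereas your Chebyshev-plus-compactness route is more direct but must invoke the geometry of $d$ to control ball measures. One small point worth making explicit if you were to write it out: the two-point bound you use at the start of part~$2$ is $\osc(h,\eps,x)\le\Var_p(h|_{B_d(x,\eps)})$ (essential sup of $|h(y)-h(y')|$ dominated by the $p$-variation over the ball), and the super-additivity step needs the two pieces $\pi^{-1}([0,\pi(x)-\eps])$ and $B_d(x,\eps)$ to be order-separated, which holds because $\pi$ is non-decreasing; both are easy but should be recorded.
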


Note that by combining~\eqref{e:1} with parts~$2$ and~$3$ of the proposition above, we obtain
$$ \Hspace^\alpha
\subset
\BV_{1/\alpha}
\subset
\Hspace^{\alpha,1}(m)
\subset
\Lspace^{\infty}(m). $$
We conclude that each of the spaces~$\BV_{1/\alpha}$ and~$\Hspace^{\alpha,1}(m)$ is dense in~$\Lspace^1(m)$.

Part~$1$ of Proposition~\ref{p:keller spaces properties} is part~$b$ of~\cite[Theorem~$1.13$]{Kel85}. 
Since this property is important for what follows, we provide a proof.

The rest of this subsection is devoted to the proof of Proposition~\ref{p:keller spaces properties}.

\begin{lemm}
\label{l:oscbyvar}
For all $p\ge 1$, $h:X\to \C$, and $\eps>0$, we have
$$ \int_X \osc(h,\eps,x)^p \ dm(x)
\le
2 \eps \Var_p(h)^p. $$
\end{lemm}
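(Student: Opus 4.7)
The plan is to reduce to a monotone function via the cumulative $p$-variation. Define $V \colon X \to [0, \infty)$ by
\[
V(x) \= \Var_p(h; X \cap (-\infty, x])^p,
\]
which is nondecreasing. The core step is the inequality
\[
|h(y) - h(y')|^p \le V(y \vee y') - V(y \wedge y')
\quad \text{for all } y, y' \in X.
\]
For $y \le y'$ and $\eta > 0$, I would select a partition $x_0 < \dots < x_k = y$ of $X \cap (-\infty, y]$ ending at $y$ whose $p$-sum exceeds $V(y) - \eta$; such a partition exists because appending $y$ to any partition of $X \cap (-\infty, y]$ not already containing $y$ only enlarges the $p$-sum. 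The extension $x_0 < \dots < x_k < y'$ is then a partition of $X \cap (-\infty, y']$ with $p$-sum at least $V(y) - \eta + |h(y') - h(y)|^p$ and bounded above by $V(y')$; letting $\eta \to 0$ gives the claim.

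Taking essential suprema of both sides over $y, y' \in B_d(x, \eps)$ and using the monotonicity of $t \mapsto t^p$ on $[0, \infty)$, I obtain the pointwise bound
\[
\osc(h, \eps, x)^p \le \osc(V, \eps, x)
\quad \text{for every } x \in X.
\]
I would next observe that $B_d(x, \eps)$ is order-convex in $X$: if $y \le z \le y'$ with $y, y' \in B_d(x, \eps)$ and $z \in X$, then $d(x, z) \le \max\{d(x, y), d(x, y')\} < \eps$. Letting $\alpha(x)$ and $\beta(x)$ denote the infimum and supremum of $B_d(x, \eps)$, we have $B_d(x, \eps) \subset [\alpha(x), \beta(x)]$ with $d(x, \alpha(x)) \le \eps$ and $d(x, \beta(x)) \le \eps$. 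Since $V$ is nondecreasing, $\osc(V, \eps, x) \le V(\beta(x)) - V(\alpha(x)) \le dV([\alpha(x), \beta(x)])$, where $dV$ denotes the Lebesgue-Stieltjes measure induced by $V$.

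Finally, Fubini-Tonelli yields
\[
\int_X \osc(V, \eps, x) \, dm(x) \le \int_X m\bigl(\{x \in X : \alpha(x) \le y \le \beta(x)\}\bigr) \, dV(y) \le \int_X m\bigl(\{x \in X : d(x, y) \le \eps\}\bigr) \, dV(y).
\]
For each $y \in X$, the set $\{x \in X : d(x, y) \le \eps\}$ decomposes into its parts with $x \le y$ and with $x \ge y$, each having $m$-measure at most $\eps$ because $m$ is atom-free and, on each side, the set is upward (respectively downward) closed in $X$ with ``thickness'' $d(x,y) \le \eps$. Thus the total is at most $2\eps$. Combined with $\int_X dV = V(\sup X) = \Var_p(h)^p$, this gives the lemma. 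The main step requiring care is the transition from the essential supremum defining $\osc(V, \eps, x)$ to the pointwise bound involving $\alpha(x)$ and $\beta(x)$: atom-freeness of $m$ is what permits one to ignore possible jumps of $V$ at boundary points of $m$-measure zero. Measurability of $\alpha, \beta$ and the applicability of Fubini-Tonelli are routine.
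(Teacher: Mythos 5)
Your proof is correct, but it takes a genuinely different route from the paper's. The paper works directly with the oscillation: it parametrizes $X$ via the cumulative distribution function, setting $x(t) := \sup\{y\in X : d(y,\inf X)=t\}$ for $t\in[0,m(X)]$, observes that for each fixed offset $\xi\in[0,2\eps]$ the balls $B_d(x(\xi+2k\eps),\eps)$, $k=0,\dots,n_\eps(\xi)$, are pairwise disjoint (so the corresponding values of $\osc(h,\eps,\cdot)^p$ sum to at most $\Var_p(h)^p$), and then averages this bound over $\xi$, using a one-dimensional tiling identity to recover $\int_X\osc(h,\eps,x)^p\,dm(x)$. You instead linearize the problem by introducing the cumulative $p$-variation $V(x)=\Var_p(h;X\cap(-\infty,x])^p$, establish the superadditivity estimate $|h(y)-h(y')|^p\le V(y\vee y')-V(y\wedge y')$, reduce to the monotone function $V$, and then apply Fubini--Tonelli to the product measure $m\otimes dV$. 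Both proofs use the atom-freeness of $m$ at a comparable conceptual step: the paper when identifying $\int_0^{m(X)}\cdot\,dt$ with $\int_X\cdot\,dm$, and yours when passing from the open balls to the closed order-intervals $[\alpha(x),\beta(x)]$ and when bounding $m(\{x:d(x,y)\le\eps\})\le 2\eps$. The paper's averaging argument is more compact once the parametrization is set up; your Fubini argument makes the interplay between $m$ and the variation measure $dV$ more transparent and would adapt more readily to weighted or modified versions of $\osc_1$.
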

\begin{proof}
Put $a\=\inf X$ and for each $t$ in $[0, m(X)]$, put
$$x(t) \= \sup\{y\in X: d(y,a)=t\}. $$
Suppose first $\eps\ge m(X)/2$.
Using that for every $x$ in $X$ we have  $\osc(h,\eps, x)^p\le \Var_p(h)^p$, we obtain 
$$ \int_X \osc(h,\eps,x)^p \ dm(x)
\le
m(X) \Var_p(h)^p\le 2\eps \Var_p(h)^p. $$
It remains to consider the case $\eps< m(X)/2$. 
For every~$\xi$ in~$[0,2\eps]$, put
$$ n_{\eps}(\xi)
\=
\max \{\text{ nonnegative integer } n : \xi+2n \eps\le m(X)\}. $$
Since the balls $\left( B_d(x(\xi+2k\eps), \eps) \right)_{k=0}^{n_\eps(\xi)}$ are pairwise disjoint, we have
$$ \sum_{k=0}^{n_{\eps}(\xi)}\osc\left(h,\eps, x(\xi+2k\eps)\right)^{p}
\le
\Var_p(h)^p. $$
It follows that
\begin{align*}
\int_X\osc(h,\eps, x)^p \ d m(x)
=
\int_{0}^{2\eps}\sum_{k=0}^{n_{\eps}(\xi)}\osc(h,\eps, x(\xi+2k\eps'))^p \ d \xi
\le
2 \eps \Var_p(h)^p,
\end{align*}
and so we obtain the lemma.
\end{proof}

\begin{lemm}[\cite{Kel85}, Lemma~$1.12$]
\label{limit}
Let~$\alpha$ be in~$(0,1]$, and let~$(h_n)_{n=1}^{+\infty}$ be a sequence in~$\Hspace^{\alpha,1}(m)$. 
If there is~$h$ in~$\Lspace^1(m)$ such that~$(h_n)_{n=1}^{+\infty}$ converges to~$h$ in~$\Lspace^1(m)$, then
$$ |h|_{\alpha,1}
\le
\liminf_{n\to +\infty}|h_n|_{\alpha,1}. $$
\end{lemm}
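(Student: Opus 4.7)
The plan is to prove the lower-semicontinuity under $\Lspace^1(m)$-convergence by reducing to a fixed radius $\eps$, extracting a suitable a.e.\ convergent subsequence, and then invoking Fatou's lemma.

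First, I would fix $\eps \in (0, A]$ and aim for the pointwise-in-$\eps$ inequality
$$
\osc_1(h,\eps) \le \liminf_{n\to+\infty}\osc_1(h_n,\eps).
$$
Once this is established, dividing by $\eps^\alpha$ gives $\osc_1(h,\eps)/\eps^\alpha \le \liminf_n |h_n|_{\alpha,1}$, and taking the supremum over $\eps \in (0,A]$ on the left yields the lemma. We may assume the $\liminf$ in the conclusion is finite, otherwise there is nothing to prove.

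Next I would carry out the standard subsequence / sub-subsequence argument. Extract a subsequence $(h_{n_k})$ with $\osc_1(h_{n_k},\eps) \to \liminf_n \osc_1(h_n,\eps)$. Since $h_{n_k} \to h$ in $\Lspace^1(m)$, a further subsequence, still denoted $(h_{n_k})$ for simplicity, converges to $h$ pointwise on a set $E \subset X$ with $m(X \setminus E) = 0$. Fix $x \in X$; since $m$ is atom-free, the set $B_d(x,\eps) \cap E$ has full $m$-measure in $B_d(x,\eps)$, so the essential suprema over $B_d(x,\eps)$ and over $B_d(x,\eps)\cap E$ coincide. For every $y, y' \in B_d(x,\eps) \cap E$,
$$
|h(y) - h(y')| = \lim_{k\to+\infty} |h_{n_k}(y) - h_{n_k}(y')| \le \liminf_{k\to+\infty} \osc(h_{n_k},\eps,x),
$$
so taking the essential sup on the left gives $\osc(h,\eps,x) \le \liminf_k \osc(h_{n_k},\eps,x)$ for every $x \in X$.

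Then I would integrate this pointwise inequality and apply Fatou's lemma:
$$
\osc_1(h,\eps) = \int_X \osc(h,\eps,x)\, dm(x) \le \int_X \liminf_{k\to+\infty}\osc(h_{n_k},\eps,x)\, dm(x) \le \liminf_{k\to+\infty} \osc_1(h_{n_k},\eps),
$$
and the right-hand side equals $\liminf_n \osc_1(h_n,\eps)$ by our choice of $(n_k)$. This establishes the key pointwise-in-$\eps$ inequality, and the final step described in the first paragraph concludes the proof.

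The only subtle point I would flag is step three: justifying that the essential supremum of $|h(y)-h(y')|$ over $B_d(x,\eps)$ can be computed after restricting $y, y'$ to the a.e.\ convergence set $E$, and that the bound $\liminf_k \osc(h_{n_k},\eps,x)$ is uniform in $(y,y')$. Both are handled by $m$ being atom-free (so $B_d(x,\eps)\cap E$ still has positive, indeed full, measure in $B_d(x,\eps)$) and by observing that $\osc(h_{n_k},\eps,x)$ already bounds $|h_{n_k}(y)-h_{n_k}(y')|$ for $m\otimes m$-a.e.\ pair $(y,y')$ in $B_d(x,\eps)\times B_d(x,\eps)$, a property preserved under pointwise limits on a full-measure set.
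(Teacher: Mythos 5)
The paper does not prove this lemma itself; it cites it directly as Lemma~$1.12$ of Keller's paper~\cite{Kel85} and uses it as a black box, so there is no in-paper proof to compare against. Your argument --- fix $\eps$, pass to a subsequence realizing the $\liminf$ of $\osc_1(\cdot,\eps)$, pass to a further $m$-a.e.\ convergent subsequence, prove the pointwise bound $\osc(h,\eps,x)\le\liminf_k\osc(h_{n_k},\eps,x)$, and integrate via Fatou --- is a correct and complete proof of the stated lower-semicontinuity. The one genuinely delicate point is the one you flag: the inequality $|h_{n_k}(y)-h_{n_k}(y')|\le\osc(h_{n_k},\eps,x)$ holds only for $(m\otimes m)$-a.e.\ pair $(y,y')\in B_d(x,\eps)^2$, so the displayed line ``for every $y,y'\in B_d(x,\eps)\cap E$'' should really read ``for $(m\otimes m)$-a.e.\ $(y,y')$,'' obtained by discarding the countable union over $k$ of the exceptional null sets together with $(E\times E)^c$; since the essential supremum defining $\osc(h,\eps,x)$ is insensitive to this null set, the conclusion $\osc(h,\eps,x)\le\liminf_k\osc(h_{n_k},\eps,x)$ follows as you say. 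With that correction made explicit, the proof is sound.
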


\begin{proof}[Proof of Proposition~\ref{p:keller spaces properties}]
To prove part~$1$ it suffices to check that $\Hspace^{\alpha, 1}(m)$ is complete with respect to~$\|\cdot\|_{\alpha,1}$. 
Let~$(h_n)_{n=1}^{+\infty}$ be a Cauchy sequence in~$\Hspace^{\alpha,1}(m)$. 
Then~$(h_n)_{n=1}^{+\infty}$ is also a Cauchy sequence
in~$\Lspace^1(m)$, and therefore there is~$h$ in~$\Lspace^1(m)$  such that $\|h_{n}-h\|_1\to 0$ as $n\to +\infty$.
By Lemma~\ref{limit}, we have
$$ |h|_{\alpha,1}
\le
\liminf_{n\to +\infty}|h_n|_{\alpha,1}
<
+\infty. $$
It follows that~$h$ is in~$\Hspace^{\alpha,1}(m)$. 
To complete the proof of part~$1$, it is enough to prove that for every $\delta>0$ there is $N>0$ such that for each integer $n\ge N$ we have
$$ |h_n - h|_{\alpha,1} \le \delta. $$
In fact, since $(h_n)_{n=1}^{+\infty}$ is a Cauchy sequence in $\Hspace^{\alpha,1}(m)$,  there is  $N>0$ such that for each pair of integers $k, n\ge N$, we have $\|h_k-h_n\|_{\alpha,1}<\delta$.
Fix $n\ge N$, and note that $h-h_k+(h_n-h)$ converges to $h_n-h$ in $\Lspace^1(m)$ as $k\to +\infty$.
It follows from Lemma~\ref{limit} again that
$$ |h_n-h|_{\alpha,1}
\le
\liminf_{k\to +\infty}|h-h_k+(h_n-h))|_{\alpha,1}
=
\liminf_{k\to +\infty}|h_n-h_k|_{\alpha,1}
\le
\delta. $$
The proof of part~$1$ is complete.

Let us prove part~$2$. 
By  H{\"o}lder's integral inequality and Lemma~\ref{l:oscbyvar} with $p=1/\alpha$,  for every $\eps$ in $(0,A]$ we have
\begin{align*}
\osc_{1}(h,\eps)
\le
\left( \int_X \osc(h,\eps,x)^{1/\alpha} \ dm(x) \right)^{\alpha}
\le
2^{\alpha}\eps^{\alpha} \Var_{1/\alpha}(h).
\end{align*}
It follows that $|h|_{\alpha,1}\le 2^{\alpha} \Var_{1/\alpha}(h)$.
On the other hand, since~$\|h\|_1\le \|h\|_{\infty}$, we have
$$ \|h\|_{\alpha,1}
=
\|h\|_{1} + |h|_{\alpha,1}
\le
\|h\|_{\infty} + 2^{\alpha} \Var_{1/\alpha}(h)\le 2^{\alpha}\|h\|_{\BV_{1/\alpha}}. $$

It remains to prove part~$3$.
Let~$h$ be in~$\Hspace^{\alpha,1}(m)$ and fix~$\varepsilon > 0$.
Then there are subsets~$X_1$ and~$X_2$ of~$X$ of positive measure for~$m$, such that
$$ \sup_{X_1} |h| \le \| h \|_1
\text{ and }
\inf_{X_2} |h| \ge \esssup_{X} |h| - \varepsilon. $$
It follows that
\begin{equation}
 \label{boun}
 \begin{split}
\esssup_{X} |h| - \varepsilon
& \le
\inf_{X_2} |h|
\\ & \le
\sup_{X_1} |h|
+ 
\esssup_{(x_1, x_2) \in X_1 \times X_2} |h(x_1) - h(x_2)|
\\ & \le
\| h \|_1
+
\esssup_{(x_1, x_2) \in X_1 \times X_2} |h(x_1) - h(x_2)|.
 \end{split} 
\end{equation}
Let~$N$ be the least integer such that~$N \ge m(X) / (2 A)$, put~$\ell \= m(X) / (4N)$, and note that~$A \ge 2\ell$.
Given~$\xi$ in~$[0,2\ell]$, let~$n(\xi)$ be the largest integer~$n \ge 0$ such that~$\xi + 2n \ell \le m(X)$.
Note that if for each $t \ge 0$ we put $x(t)\=\sup\{y\in X: d(y, \inf X)\le t\}$, then $\bigcup_{k=0}^{n(\xi)}B_d(x(\xi+2k\ell),A)$ has full measure in~$X$ with respect to~$m$.
We thus have
$$ \esssup_{(x_1, x_2) \in X_1 \times X_2} |h(x_1) - h(x_2)|
\le
\sum_{k=0}^{n(\xi)}\osc(h, A, x(\xi+2k\ell)). $$
Since this holds for every~$\xi$ in~$[0, 2\ell]$, we have
\begin{align*}
\esssup_{(x_1, x_2) \in X_1 \times X_2} |h(x_1) - h(x_2)|
& \le
\frac{1}{2 \ell}\int_{0}^{2 \ell} \sum_{k=0}^{n(\xi)}\osc(h, A, x(\xi+2k\ell)) \ d \xi
\\ & =
\frac{1}{2\ell}\int_{ X} \osc(h, A, x) \ d m(x)
\\ & \le
\frac{1}{2 \ell}|h|_{\alpha,1} A^\alpha.
\end{align*}
Together with~\eqref{boun} this implies,
$$ \esssup_{X} |h| - \varepsilon
\le
\|h\|_1 + \frac{A^\alpha}{2\ell} |h|_{\alpha,1}
\le
\max \left\{ 1, \frac{A^\alpha}{2\ell} \right\} \|h\|_{\alpha,1}. $$
Since this holds for every~$\varepsilon >0$, this proves part~$3$ with $C_* = \max \left\{ 1, \frac{A^\alpha}{2\ell} \right\}$, and completes the proof of the proposition.
\end{proof}

\subsection{Quasi-compactness and spectral gap}
\label{ss:quasicompactness}
In this subsection, let~$I$ be a compact interval of~$\R$, $N\ge 2$ an integer,  and $\sP'\=\{I'_1, \cdots, I'_N\}$ a partition of $I$ into intervals. 
Let $T:I \to I$ be a transformation on~$I$ that is continuous and monotone on each~$I'_i$ in~$\sP'$.
Furthermore, let~$X$ be a compact subset of~$I$ such that~$T^{-1}(X) = X$, for each~$i$ in $\{1,\cdots, N\}$ put $I_i:=I'_i\cap X$, and put $\sP:=\{I_1, \cdots, I_N\}$.
Fix $p\ge 1$, let $g: X\to [0, +\infty)$ be a function of bounded $p$-variation, and let $\sL_g$  be the operator acting on the space
$$ \text{Eb}(X)
\=
\{h:X \to \C \text{ measurable and bounded in absolute value} \}, $$
defined by
\begin{equation}
\label{e:otor}
\sL_g(h)(x)
\=
\sum_{y\in T^{-1}(x)}h(y)g(y)
=
\sum_{i \in \{1, \ldots, N \}, x \in T(I_i)} (h\cdot g) \circ T_{|_{I_i}}^{-1}(x).
 \end{equation}
Assume in addition that there is an atom-free Borel probability measure~$m$ on~$X$ such that the following properties hold:
\begin{enumerate}
\item[H$1$.]
For each $I_i$ in $\sP$, the map~$T|_{I_i}^{-1}$ is non-singular with respect to~$m$, so that for every subset~$E$ of~$I_i$ of measure zero, the set~$\left(T|_{I_i}^{-1}\right)^{-1}(E) = T(E)$ is also of measure zero;
\item[H$2$.]
On a set of full measure with respect to~$m$, we have
$$ g^{-1}
=
\sum_{i=1}^N \frac{d \left( T_{|_{I_i}}^{-1} \right)_*m }{d m} ; $$
\item[H$3$.]
For each~$h$ in~$\text{Eb}(X)$ we have $\int_X \sL_g(h) \ d m = \int_X h \ d m$, and $\sL_g$ extends to a positive linear map from $\Lspace^1(m)$ to itself satisfying $\|\sL_g(h)\|_1\le \|h\|_1$.
\end{enumerate}

In the following theorem, we gather several results from~\cite{Kel85}.
\begin{theo}[\cite{Kel85}, Theorems~$3.2$ and~$3.3$]
\label{t:quasicompactness}
Let~$T$, $g$, $\sL_g$, and~$m$ be as above, and assume that there is an integer $n\ge 1$ such that the function
$$ g_n(x)
\=
g(x) \cdot \cdots \cdot g(T^{n-1}(x)) $$
satisfies $\sup_{X}g_n < 1$.
Then there is an integer~$k \ge 1$ and constants $A>0$, $\beta$ in~$(0,1)$, and~$C>0$, such that for every function~$h$ in~$\Hspace^{1/p, 1}(m)$, we have
\begin{equation}
\label{twon}
\|\sL^k_g(h)\|_{1/p,1}\le \beta \|h\|_{1/p,1}+ C\|h\|_1.
\end{equation}
Moreover, the following properties hold:
\begin{enumerate}
\item[1.]
The set~$\sE$ of eigenvalues of~$\sL_{g}|_{\Lspace^1(m)}$ of modulus~$1$ is finite.
Moreover, for each~$\lambda$ in~$\sE$, the space
$$ \Espace(\lambda)
\=
\{h \in \Lspace^1(m) : \sL_{g}(h)=\lambda h\} $$
is contained in~$\Hspace^{1/p, 1}(m)$ and it is of finite dimension;
\item[2.]
If for each~$\lambda$ in~$\sE$ we denote by~$\sP(\lambda)$ the projection in~$\Lspace^1(m)$ to~$\Espace(\lambda)$, then the operator
$$ \sQ \= \sL_{g} - \sum_{\lambda \in \sE} \sP(\lambda) $$
satisfies~$\sup \{ \| \sQ^n \|_1 : n \ge 0 \text{ integer} \} < + \infty$.
Moreover, $\sQ$ maps~$\Hspace^{1/p, 1}(m)$ to itself, and there is~$\rho$ in~$(0, 1)$ and a constant~$M > 0$ such that for every integer~$n \ge 0$ we have~$\| \sQ^n \|_{\alpha, 1} \le M \rho^n$.
Finally, for each~$\lambda$ in~$\sE$ the operators~$\sQ \sP(\lambda)$ and~$\sP(\lambda) \sQ$ are both identically zero, and for each~$\lambda'$ in~$\sE$ different from~$\lambda$ the operators~$\sP(\lambda) \sP(\lambda')$ and~$\sP(\lambda') \sP(\lambda)$ are also identically zero.
\item[3.]
The set~$\sE$ contains~$1$, and if we put $h \= \sP(1)(\1)$, then~$\nu \= hm$ is a probability measure that is invariant by~$T$ and that is an equilibrium state of~$T_{|_X}$ for the potential~$\log g$.
\end{enumerate}
\end{theo}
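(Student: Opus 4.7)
The plan is to first establish the Lasota--Yorke type inequality \eqref{twon}, then extract quasi-compactness by the Ionescu-Tulcea--Marinescu scheme, and finally identify the invariant density and verify the equilibrium property. The analytic heart of the argument is \eqref{twon}; everything downstream is standard.

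For the Lasota--Yorke inequality I would start from
\[
\sL_g^k(h)(x) = \sum_{y\in T^{-k}(x)} h(y)\, g_k(y)
\]
and estimate $\osc(\sL_g^k(h),\varepsilon,x)$ by splitting the preimage branches of $T^k$ into those defined throughout $B_d(x,\varepsilon)$ (the interior part) and those created or destroyed as $x$ varies (the boundary part). The interior part transports the oscillation of $h$ through each branch with weight $g_k$; using hypotheses H1 and H2 to change variables, its integral in $x$ is bounded by $(\sup_X g_k)\cdot\osc_1(h, C\varepsilon)$. The boundary part involves only boundedly many branches per monotonicity interval of $T^k$ and is controlled via Lemma~\ref{l:oscbyvar} applied to $g_k$ (which has bounded $p$-variation with a norm whose growth in $k$ can be controlled using H2), together with $\|h\|_1$, producing a contribution of order $\varepsilon^{1/p}\|h\|_1$. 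Dividing by $\varepsilon^{1/p}$, taking the supremum over $\varepsilon\in(0,A]$, and choosing $k$ large enough that $\sup_X g_k$ is a prescribed small fraction (possible since $\sup_X g_n<1$ yields sub-multiplicative decay of $\sup_X g_{jn}$) gives \eqref{twon} with some $\beta\in(0,1)$.

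Once \eqref{twon} is in hand, iteration yields $\|\sL_g^{jk}(h)\|_{1/p,1} \le \beta^j \|h\|_{1/p,1} + C(1-\beta)^{-1}\|h\|_1$. Combined with the compactness of the embedding $(\Hspace^{1/p,1}(m),\|\cdot\|_{1/p,1}) \hookrightarrow \Lspace^1(m)$---a Helly-type fact following from the averaged-oscillation definition of the Keller norm and Proposition~\ref{p:keller spaces properties}---Hennion's theorem yields quasi-compactness of $\sL_g$ on $\Hspace^{1/p,1}(m)$ with essential spectral radius at most $\beta^{1/k}<1$. This gives parts~1 and~2 directly: $\sE$ is finite; each eigenspace $\Espace(\lambda)$ is finite-dimensional and contained in $\Hspace^{1/p,1}(m)$ (regularity of eigenvectors follows by applying \eqref{twon} to them); the peripheral projectors are bounded on $\Lspace^1(m)$ by the $\Lspace^1$-contraction property in H3; and the remainder $\sQ = \sL_g - \sum_{\lambda\in\sE}\sP(\lambda)$ has spectral radius $\rho<1$, whence $\|\sQ^n\|_{1/p,1}\le M\rho^n$. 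The mutual orthogonality of the peripheral projectors follows from standard functional calculus on finite-dimensional invariant subspaces.

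For part~3, hypothesis H3 says that $\1\in\Lspace^\infty(m)$ is fixed by the dual $\sL_g^*$, so $1$ lies in the $\Lspace^1$-spectrum of $\sL_g$ and, by the bound on the essential spectral radius, in $\sE$. Positivity of $\sL_g$ and a Perron--Frobenius argument on the finite-dimensional peripheral space produce the nonnegative fixed function $h\=\sP(1)(\1)\in\Hspace^{1/p,1}(m)$ with $\int h\,dm=1$; a direct duality computation using H3 then shows that $\nu\=hm$ is $T$-invariant. To identify $\nu$ as an equilibrium state for $\log g$, note that H2 expresses $g^{-1}$ as the Jacobian of $T$ with respect to $m$, so Rokhlin's entropy formula yields $h_\nu(T) = -\int \log g\,d\nu$; combined with the variational principle and the inequality $P(T,\log g)\le 0$ coming from the normalization $\int \sL_g(\1)\,dm=1$, this exhibits $\nu$ as an equilibrium state. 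The main obstacle is the Lasota--Yorke inequality itself, and within it the control of the boundary contributions under iteration together with a $k$-independent estimate on $\Var_p(g_k)$; all subsequent steps are routine consequences of \eqref{twon} and the structure of the function spaces recalled in Section~\ref{sec:keller}.
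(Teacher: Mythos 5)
The paper does not prove this theorem: it is quoted directly from Keller's 1985 paper (Theorems~3.2 and~3.3 there), as the bracketed attribution in the statement indicates and as the sentence ``In the following theorem, we gather several results from \cite{Kel85}'' makes explicit. So there is no in-paper proof to compare against. Your sketch is a plausible high-level reconstruction of Keller's argument and identifies the right structural skeleton: a Lasota--Yorke inequality on $\Hspace^{1/p,1}(m)$, quasi-compactness via Ionescu-Tulcea--Marinescu (or Hennion) together with the compact embedding $\Hspace^{1/p,1}(m)\hookrightarrow\Lspace^1(m)$, a Perron--Frobenius identification of the nonnegative fixed density $h=\sP(1)(\1)$, and the Rokhlin entropy formula for the equilibrium property. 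Two places, however, are stated too loosely to be convincing even as a sketch.

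First, in the Lasota--Yorke estimate the contraction does \emph{not} appear as a multiplicative prefactor $\sup_X g_k$ in front of $\osc_1(h,\cdot)$. On each monotonicity branch of $T^k$ the weight $g_k$ in $\sL_g^k$ cancels against the Jacobian $g_k^{-1}$ of $T^k$ under the change of variables by~$m$ (hypotheses H1--H2), so the interior contribution is bounded by $\osc_1(h,\,2(\sup_X g_k)\varepsilon)$ with no extra factor. The gain shows up only after dividing by $\varepsilon^{1/p}$, yielding the factor $(2\sup_X g_k)^{1/p}$; this is the true source of $\beta<1$ and makes clear why the exponent $1/p$ and the $d$-pseudo-metric are tuned to the $p$-variation class of~$g$.

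Second, and more substantially, your justification of $P(T,\log g)\le 0$ in part~3 is not adequate. The normalization $\int\sL_g(\1)\,dm=1$ (equivalently $\int\sL_g^n(\1)\,dm=1$ for all $n$) controls the $m$-average of the partition sums $\sum_{y\in T^{-n}(x)}g_n(y)$, but it does not by itself bound $h_\mu(T)+\int\log g\,d\mu$ for an arbitrary $T$-invariant probability measure $\mu$, which can be singular with respect to $m$. Establishing that bound requires a genuine Rokhlin/Margulis--Ruelle-type argument exploiting the piecewise-monotone structure (finitely many branches, $\sP$ generating) to relate $h_\mu(T)$ to the Jacobian $g^{-1}$. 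That is where Keller's Theorem~3.3 does its real work, and your sketch currently treats it as a free consequence of the normalization.
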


The following corollary follows from the previous theorem using known arguments. 
We include its proof for completeness.

\begin{coro}
\label{c:quasicompactness}
Under the assumptions of Theorem~\ref{t:quasicompactness}, and assuming  in addition that $T$ is topologically exact on $X$, we have the following properties:
\begin{enumerate}
\item[1.]
The number~$1$ is an eigenvalue of~$\sL_g$ of algebraic multiplicity~$1$.
Moreover, there is~$\rho$ in~$(0, 1)$ such that the spectrum of~$\sL_g|_{\Hspace^{1/p, 1}(m)}$ is contained in~$B(0, \rho) \cup \{ 1 \}$.
\item[2.]
There is a constant~$C > 0$ such that for every bounded measurable function~$\phi : X \to \C$, and every function~$\psi$ in~$\Hspace^{1/p, 1}(m)$, the measure~$\nu$ given by part~$3$ of Theorem~\ref{t:quasicompactness} satisfies for every integer~$n \ge 1$ that
$$ C_n(\phi, \psi) \le C \| \phi \|_{\infty} \| \psi \|_{1/p, 1}
\rho^n. $$
\item[3.]
Given~$\psi$ in~$\Hspace^{1/p, 1}(m)$, for each~$\tau$ in~$\C$ the operator~$\sL_\tau$ defined by
$$ \sL_{\tau} (h)
\=
\sL_g \left( \exp(\tau \psi) \cdot h \right) $$
maps~$\Hspace^{1/p, 1}(m)$ to itself and the
restriction~$\sL_{\tau}|_{\Hspace^{1/p, 1}(m)}$ is bounded.
Moreover, $\tau \mapsto \sL_{\tau}|_{\Hspace^{1/p, 1}(m)}$ is
analytic in the sense of Kato on~$\C$, and the spectral radius
of~$\sL_{\tau}|_{\Hspace^{1/p, 1}(m)}$ depends on a real
analytic way on~$\tau$ on a neighborhood of~$\tau = 0$.
\end{enumerate}
\end{coro}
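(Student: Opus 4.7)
The plan is to derive all three parts from Theorem~\ref{t:quasicompactness} together with standard spectral arguments, using topological exactness to rule out nontrivial peripheral spectrum. For part~$1$, the set~$\sE$ of peripheral eigenvalues of~$\sL_g|_{\Lspace^1(m)}$ is finite and contains~$1$ by Theorem~\ref{t:quasicompactness}. I would first show that $\dim \Espace(1)=1$: given $h'\in \Espace(1)$, the pointwise inequality $\sL_g(|h'|)\ge |h'|$ combined with the identity $\int \sL_g(|h'|)\,dm=\int |h'|\,dm$ (from H$3$) yields $\sL_g(|h'|)=|h'|$, so $|h'|\in \Espace(1)$. Topological exactness of~$T$ on~$X$ implies that the invariant measure $\nu=hm$ given by part~$3$ of Theorem~\ref{t:quasicompactness} is exact and in particular ergodic; a standard argument applied to $h'/h$ then forces $h'$ to be a scalar multiple of $h$. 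The same kind of ergodic reasoning, applied to any $\lambda\in \sE$ via the fact that $|h'|^2/h^2$ of an associated eigenfunction is $T$-invariant, rules out $\lambda\ne 1$, so $\sE=\{1\}$. Combined with part~$2$ of Theorem~\ref{t:quasicompactness} this gives the decomposition $\sL_g=\sP(1)+\sQ$ with $\|\sQ^n\|_{1/p,1}\le M\rho^n$, which is the spectral gap on $\Hspace^{1/p,1}(m)$.

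For part~$2$, I would use the duality $\int \phi\circ T^n\cdot u\,dm=\int \phi\cdot \sL_g^n(u)\,dm$, a consequence of H$2$. Applied with $u=\psi h$ this gives
$$
\int_X \phi\circ T^n\cdot \psi\,d\nu=\int_X \phi\cdot \sL_g^n(\psi h)\,dm.
$$
Using the decomposition from part~$1$, $\sL_g^n(\psi h)=\sP(1)(\psi h)+\sQ^n(\psi h)$; integrating the identity $\sL_g^n(\psi h)-\sQ^n(\psi h)=\sP(1)(\psi h)$ against $dm$ and using $\int h\,dm=1$ together with $\int \sQ^n(\psi h)\,dm\to 0$ identifies $\sP(1)(\psi h)=\left(\int\psi\,d\nu\right)h$. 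Hence
$$
C_n(\phi,\psi)=\left|\int_X\phi\cdot \sQ^n(\psi h)\,dm\right|\le \|\phi\|_\infty\|\sQ^n(\psi h)\|_1\le M\rho^n\|\phi\|_\infty\|\psi h\|_{1/p,1}.
$$
What remains is a product estimate $\|\psi h\|_{1/p,1}\le K\|\psi\|_{1/p,1}$ with $K$ depending only on $h$; this follows from the pointwise bound $\osc(\psi h,\eps,x)\le \|\psi\|_\infty\osc(h,\eps,x)+\|h\|_\infty\osc(\psi,\eps,x)$, integrating in~$x$, and invoking part~$3$ of Proposition~\ref{p:keller spaces properties} to control $\|\psi\|_\infty$ and $\|h\|_\infty$ by the $\|\cdot\|_{1/p,1}$-norm.

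For part~$3$, I would expand
$$
\sL_\tau(u)=\sum_{k=0}^{+\infty}\frac{\tau^k}{k!}\sL_g(\psi^k u)
$$
and check that each coefficient $u\mapsto \sL_g(\psi^k u)$ is bounded on $\Hspace^{1/p,1}(m)$ with operator norm $O(C^k)$, where $C$ depends only on $\|\psi\|_{1/p,1}$. This uses iteration of the same product estimate as in part~$2$, together with boundedness of $\sL_g|_{\Hspace^{1/p,1}(m)}$ (from \eqref{twon}). The series then converges in operator norm on all of $\C$, giving Kato-analyticity of $\tau\mapsto \sL_\tau|_{\Hspace^{1/p,1}(m)}$. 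Since by part~$1$ the number~$1$ is a simple isolated eigenvalue of $\sL_0=\sL_g$ with the rest of the spectrum inside $\overline{B(0,\rho)}$, Kato's perturbation theory yields a unique simple eigenvalue $\lambda(\tau)$ near $1$ depending real analytically on real $\tau$ near~$0$, while the rest of the spectrum stays inside a disk of radius $<\lambda(\tau)$. Hence the spectral radius of $\sL_\tau|_{\Hspace^{1/p,1}(m)}$ coincides with $\lambda(\tau)$ and is real analytic near $\tau=0$. The main obstacle I anticipate is the product estimate on $\Hspace^{1/p,1}(m)$, which is not stated explicitly in the excerpt but is needed for both parts~$2$ and~$3$; the oscillation inequality above combined with Proposition~\ref{p:keller spaces properties} should settle it, and the peripheral-spectrum argument in part~$1$ must be carried out in $\Lspace^1(m)$ rather than in a space of continuous functions, which requires invoking ergodicity of $\nu$ (as supplied by topological exactness) rather than topological arguments on supports.
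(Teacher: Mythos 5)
Parts~2 and~3 of your proposal are essentially correct. Part~2 matches the paper's argument (pull back via~$\sL_g^n$, identify $\sP(1)(\psi h)=(\int\psi\,d\nu)h$ using the one-dimensionality of~$\Espace(1)$, bound via $\|\sQ^n\|_{1/p,1}\le M\rho^n$, and control $\|\psi h\|_{1/p,1}$ by the product estimate, which the paper supplies as Lemma~\ref{l:product in keller}). In part~3 you use a power-series expansion of~$\exp(\tau\psi)$, whereas the paper directly estimates the difference quotient $\bigl(\sL_{\tau+\varepsilon}-\sL_\tau\bigr)/\varepsilon-\sD_\tau$; both routes rest on the same product estimate and then invoke Kato perturbation theory, so this is a harmless variant.

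Part~1, however, has a genuine gap. You assert that topological exactness of~$T$ makes the invariant measure $\nu=hm$ exact (hence ergodic), but this implication is not automatic and you give no argument for it; in fact if one already knew $\nu$ were weakly mixing, one would essentially already have the spectral-gap conclusion, so the reasoning risks circularity. Your sketch also silently assumes $h>0$ $m$-a.e.\ in order to form $h'/h$; establishing that eigenfunctions of~$\sL_g$ are nonzero a.e.\ is precisely where the paper spends its effort. Finally, even granting ergodicity and positivity, the step ruling out $\lambda\ne 1$ via ``$|h'|^2/h^2$ is $T$-invariant'' only controls the modulus of the eigenfunction and yields no contradiction for $\lambda\ne1$; one must instead track the phase $\phi/|\phi|$ and invoke weak mixing, not mere ergodicity. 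The paper avoids all of this by working directly with the Keller norm: from $\sL_g^n|\phi|=|\phi|$ and the oscillation seminorm it locates a $d$-ball on which $|\phi|$ is bounded below, then propagates positivity over all of~$X$ using topological exactness together with hypothesis~H$2$ (part~1.1); a second oscillation argument shows the phase of~$\phi$ is a.e.\ constant, which forces $\lambda=1$ since $\sL_g|\phi|=\lambda|\phi|\ge 0$ (parts~1.2--1.3); algebraic multiplicity~$1$ then follows from a short integration argument. Your proof would need to replace the appeal to ``ergodicity from topological exactness'' with these two explicit Keller-space steps.
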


The proof of this corollary is after the following lemma.

\begin{lemm}
  \label{l:product in keller}
Let~$\alpha$, $m$, and~$\Hspace^{\alpha, 1}(m)$ be as above, and let~$C_*$ be given by Proposition~\ref{p:keller spaces properties}.
Then for every~$h$ and~$g$ in~$\Hspace^{\alpha, 1}(m)$, we have
$$ \| h \cdot g \|_{\alpha, 1}
\le
2 C_* \| h \|_{\alpha, 1} \cdot \| g \|_{\alpha, 1}. $$
\end{lemm}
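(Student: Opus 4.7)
The plan is to control $\|h \cdot g\|_{\alpha,1} = \|h \cdot g\|_1 + |h \cdot g|_{\alpha,1}$ by using the standard Leibniz-type estimate for oscillations and then converting essential suprema into $\|\cdot\|_{\alpha,1}$-norms via part~$3$ of Proposition~\ref{p:keller spaces properties}, which provides the constant $C_*$.

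First I would establish the pointwise oscillation inequality
$$ \osc(h \cdot g, \eps, x) \le \|h\|_\infty \osc(g, \eps, x) + \|g\|_\infty \osc(h, \eps, x), $$
which follows from the elementary identity $h(y)g(y) - h(y')g(y') = h(y)(g(y) - g(y')) + g(y')(h(y) - h(y'))$ applied inside $B_d(x, \eps)$, and from the fact that $|h(y)| \le \|h\|_\infty$ and $|g(y')| \le \|g\|_\infty$ hold on a set of full $m$-measure (here $\|\cdot\|_\infty$ denotes the essential supremum, consistent with the convention used in the definition of $\osc(\cdot,\eps,x)$). Integrating against $m$ and dividing by $\eps^\alpha$ would yield
$$ |h \cdot g|_{\alpha, 1} \le \|h\|_\infty \, |g|_{\alpha, 1} + \|g\|_\infty \, |h|_{\alpha, 1}. $$

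Next I would bound the $\Lspace^1$-piece by $\|h \cdot g\|_1 \le \|h\|_\infty \|g\|_1$, add it to the oscillation estimate above, and regroup to get
$$ \|h \cdot g\|_{\alpha, 1} \le \|h\|_\infty \|g\|_{\alpha, 1} + \|g\|_\infty |h|_{\alpha, 1} \le \|h\|_\infty \|g\|_{\alpha, 1} + \|g\|_\infty \|h\|_{\alpha, 1}. $$
Finally, invoking part~$3$ of Proposition~\ref{p:keller spaces properties} in the form $\|h\|_\infty \le C_* \|h\|_{\alpha,1}$ and $\|g\|_\infty \le C_* \|g\|_{\alpha,1}$ converts the right-hand side into $2 C_* \|h\|_{\alpha,1} \|g\|_{\alpha,1}$, which is the desired bound.

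There is no real obstacle: the only point that requires a little care is ensuring the pointwise oscillation inequality is compatible with the essential-supremum convention in the definition of $\osc(\cdot, \eps, x)$, i.e. that the exceptional $m$-null set on which $|h| > \|h\|_\infty$ or $|g| > \|g\|_\infty$ does not affect the essential supremum over $B_d(x, \eps)$. Since a countable union of null sets is null and the quantities $\osc_1(\cdot, \eps)$ only depend on equivalence classes, this causes no trouble.
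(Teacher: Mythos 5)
Your proof is correct and follows essentially the same route as the paper: the Leibniz-type oscillation bound $\osc(h\cdot g,\eps,x)\le \|h\|_\infty\osc(g,\eps,x)+\|g\|_\infty\osc(h,\eps,x)$, regrouping with $\|h\cdot g\|_1\le\|h\|_\infty\|g\|_1$, and then part~$3$ of Proposition~\ref{p:keller spaces properties} to convert essential suprema into $\|\cdot\|_{\alpha,1}$-norms. Your remark about the null-set bookkeeping in the essential-supremum convention is a legitimate refinement of the paper's terser justification ("each of the functions is represented by a bounded function").
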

\begin{proof}
Using that each of the functions~$h$ and~$g$ is represented by a bounded function, we have
$$ \osc(h \cdot g, \varepsilon, x)
\le
\| h \|_{\infty} \osc(g, \varepsilon, x) + \| g \|_{\infty} \osc(h, \varepsilon, x). $$
We thus have
$$ | h \cdot g |_{\alpha, 1}
\le
\| h \|_{\infty} \cdot | g |_{\alpha, 1} + \| g \|_{\infty} \cdot | h
|_{\alpha, 1}, $$
and using part~$3$ of Proposition~\ref{p:keller spaces properties} twice, we have
\begin{align*}
  \| h \cdot g \|_{\alpha, 1}
& \le
\| h \|_{\infty} \cdot \| g \|_1 + \| h \|_{\infty} \cdot | g |_{\alpha, 1} + \| g \|_{\infty} \cdot | h |_{\alpha, 1}
\\ & \le
\| h \|_{\infty} \cdot \| g \|_{\alpha, 1} + C_* | h |_{\alpha, 1} \| g \|_{\alpha, 1}
\\ & \le
2 C_* \| h \|_{\alpha, 1} \cdot \| g \|_{\alpha, 1}.
\end{align*}
\end{proof}

\begin{proof}[Proof of Corollary~\ref{c:quasicompactness}]
By part~$3$ of Theorem~\ref{t:quasicompactness} the function~$h \= \sP(1)(\pmb{1})$
satisfies~$\int_{X} h \ d m = 1$ and the measure~$\nu \= h m$ is a probability.
On the other hand, each of the spaces
$$ \Espace_0
\=
\left\{\psi \in \Hspace^{1/p, 1}(m) : \int_{X} \psi \ d m=0 \right\}
\text{ and }
\Espace_1
\=
\left\{ \alpha h : \alpha \in \C \right\} $$
is invariant by~$\sL_g$.
Moreover, since each function~$\psi$ in $\Hspace^{1/p, 1}(m)$ can be decomposed as
$$ \psi
=
\left(\int_{X} \psi \ d m\right)h + \left(\psi-\left( \int_{X} \psi \ d m\right)h\right), $$
we have~$\Hspace^{1/p, 1}(m)=E_1 \oplus E_0$.

\partn{1}
Let~$\lambda$ be an eigenvalue of~$\sL_g|_{\Hspace^{1/p,1}(m)}$
satisfying~$|\lambda|=1$, and let~$\phi$ be a nonzero element of~$\Hspace^{1/p, 1}(m)$ such that
$\sL_g(\phi)=\lambda \phi$.
By hypothesis~H$3$, for each integer $n\ge 1$ we have
\begin{align*}
\int_{X}|\phi| \ d m
=
\int_{X}|\lambda^n \phi| \ d m
=
\int_{X}|\sL_g^n(\phi)| \ d m
\le
\int_{X}\sL_g^n\left(|\phi|\right) \ d m
=
\int_{X}|\phi| \ d m,
\end{align*}
and therefore
$$ \int_{X} \sL_g^n(|\phi|)-|\sL_g^n(\phi)| \ d m =0. $$
Since $\sL_g^n(|\phi|)-|\sL_g^n(\phi)|\ge 0$, it follows that we have
\begin{equation}
\label{e:argument invariance}
\sL_g^n(|\phi|) = |\sL_g^n(\phi)| = |\phi|
\end{equation}
on a set of full measure with respect to~$m$.

In part~$1.1$ below we prove that~$\phi$ is nonzero on a set of
full measure with respect to~$m$, and in part~$1.2$ we show that there
is~$\theta_0$ in~$\R$ such that~$\phi = \exp(i\theta_0)
| \phi|$ in~$\Hspace^{\alpha, 1}(m)$.
Using these facts, we complete the proof of part~$1$ of the corollary in part~$1.3$.

\partn{1.1}
Since~$\phi$ is nonzero, there is~$\kappa_0 > 0$ such that~$\{ x \in X :
|\phi| \ge \kappa_0 \}$ has positive measure with respect to~$m$.
Let~$Y$ be the set of density points of this set, so~$m(Y) > 0$, and put
$$ \varepsilon_0
\=
\min \left\{ \left( \frac{ m(Y) \kappa_0}{2 \| | \phi | \|_{\alpha, 1}}
\right)^{\frac{1}{\alpha}}, A \right\}. $$
Note that for each~$y$ in~$Y$, the number
$$ \kappa(y) \= \essinf \{ |\phi(x)| : x \in B_d(y, \varepsilon_0) \} $$
satisfies~$\kappa(y) \le \kappa_0$.
On the other hand, we have
$$ \osc(|\phi|, \varepsilon_0, y)
\ge
\kappa_0 - \kappa(y), $$
so
$$ \int_Y \kappa_0 - \kappa(y) \ dm(y)
\le
\osc_1(|\phi|, \varepsilon_0)
\le
\varepsilon_0^{\alpha} \| |\phi| \|_{\alpha, 1}
\le
m(Y) \kappa_0 /2. $$
This implies that there is~$y_0$ in~$Y$ such that
\begin{equation}
  \label{e:bound on interval}
\essinf \{ |\phi(x)| : x \in B_d(y_0, \varepsilon_0) \}
= 
\kappa(y_0)
\ge
\kappa_0 /2.  
\end{equation}
Since by hypothesis~$f$ is topologically exact on~$X$, there is an integer~$n \ge 1$ such that~$T^n(B_d(y_0, \varepsilon_0)) = X$.
Combined with hypothesis~H$2$ and~\eqref{e:argument invariance}, the
estimate~\eqref{e:bound on interval} implies that~$| \phi |$ is nonzero on a set of full measure with
respect to~$m$.

\partn{1.2}
By part~$1.1$ there is~$\upsilon > 0$ such that
$$ W \= \{ x \in X : |\phi(x)| \ge \upsilon \} $$
satisfies~$m(W) \ge 1/2$.
Let~$W'$ be the set of density points of~$W$, so~$m(W') \ge 1/2$.

For each~$x$ in~$X$ let~$\theta(x)$ in~$\R$ be such that~$\phi(x) =
\exp(i \theta(x))|\phi(x)|$.
Suppose by contradiction that the function~$\theta : X \to \R$ so
defined is not constant on a set of full measure with respect to~$m$.
Then there are disjoint closed intervals~$\Theta$ and~$\Theta'$ such
that the sets~$\theta^{-1}(\Theta)$ and~$\theta^{-1}(\Theta')$ are disjoint, and
such that each of these sets has positive measure with respect to~$m$.
Combined with hypothesis~H$2$, property~\eqref{e:argument invariance} with~$n
= 1$ implies that there is a subset~$Z$ (resp.~$Z'$) of~$\theta^{-1}(\Theta)$
(resp.~$\theta^{-1}(\Theta')$) of full measure such that
$$ T^{-1}(Z) \subset Z
\quad
\left( \text{resp. } T^{-1}(Z') \subset Z' \right). $$
It follows that each of the sets~$Z$ and~$Z'$ is dense in~$X$.
Thus, if we denote by~$\delta$ the distance between~$\Theta$
and~$\Theta'$ in~$\R / 2\pi \Z$, then for every~$y$ in~$W'$ and
every~$\varepsilon$ in~$(0, A]$, we have
$$ \osc(\phi, \varepsilon, y)
\ge
2 \upsilon \sin(\delta/2). $$
Therefore
$$ \| \phi \|_{\alpha, 1}
\ge
\frac{\osc_1(\phi, \varepsilon)}{\varepsilon^{\alpha}}
\ge
\frac{m(W') (2 \upsilon \sin(\delta/2))}{\varepsilon^{\alpha}}
\ge
\frac{\upsilon \sin(\delta/2)}{\varepsilon^{\alpha}}. $$
Since this holds for an arbitrary~$\varepsilon$ in~$(0, A]$, we obtain a
contradiction.
This contradiction shows that the function~$\theta$ is constant on a set of full
measure with respect to~$m$.

\partn{1.3}
By part~$1.2$ there is~$\theta_0$ in~$\R$ such that~$\phi = \exp(i
\theta_0) | \phi|$ in~$\Hspace^{\alpha, 1}(m)$.
It follows that~$\sL_g \left( |\phi| \right) = \lambda |\phi|$ is nonnegative, and
therefore that~$\lambda = 1$.
Since by part~$3$ of Theorem~\ref{t:quasicompactness} the number~$1$
is an eigenvalue of~$\sL_g|_{\Hspace^{1/p, 1}(m)}$, this proves that the number~$1$ is the only eigenvalue of~$\sL_{g}$ of modulus~$1$.

The existence of~$\rho$ in~$(0, 1)$ such that the spectrum of~$\sL_g|_{\Hspace^{1/p, 1}(m)}$ is contained in~$B(0, \rho) \cup \{ 1 \}$ follows from part~$2$ of Theorem~\ref{t:quasicompactness}.

It remains to prove that the algebraic multiplicity of~$1$ as an eigenvalue of~$\sL_g|_{\Hspace^{1/p, 1}(m)}$ is $1$.
Denote by~$\Id$ the identity operator of~$\Hspace^{1/p, 1}(m)$, and let~$\phi$ be in the kernel of~$\left( \sL_g|_{\Hspace^{1/p, 1}(m)} - \Id \right)^2$.
Then~$\tphi\=\sL_g(\phi)-\phi$ satisfies $\sL_g(\tphi) = \tphi$.
Suppose~$\tphi$ is nonzero.
Then we can apply parts~$1.1$ and~$1.2$ with~$\phi$ replaced
by~$\tphi$, to conclude that there is~$\ttheta_0$ in~$\R$ such
that~$\tphi = \exp \left( i \ttheta_0 \right) |\tphi|$ in~$\Hspace^{\alpha, 1}(m)$.
Using hypothesis~H$3$ we obtain
\begin{equation*}
  \begin{split}
0
& < 
\int_{X} |\tphi| \ d m
\\ & =
\exp \left( -i \ttheta_0 \right) \int_{X}\sL_g(\phi)-\phi \ d m
\\ & =
\exp \left( -i \ttheta_0 \right) \left( \int_{X}\sL_g(\phi) \ d m-
  \int_{X}\phi \ d m \right)
\\ & =
0.
  \end{split}
\end{equation*}
This contradiction proves that $\sL_g(\phi)-\phi = \tphi$ is zero, and completes the proof of part $1$.

\partn{2}
Let~$C_*$ be the constant given by part~$3$ of Proposition~\ref{p:keller spaces properties} and let~$\rho$ and~$M$ be the constants given by part~$2$ of Theorem~\ref{t:quasicompactness}.
Putting~$\hpsi=\psi-\int_X \psi \ d \nu$, we have
\begin{equation*}
  \begin{split}
C_n(\phi, \psi)
& =
\left|\int_{X}\phi \circ f^n \cdot \hpsi \ d\nu \right|
\\ & =
\left|\int_{X}\left(\phi \circ f^n\right) \cdot \hpsi \cdot h \ dm \right|
\\ & =
\left|\int_{X}\sL_g^n\left(\left(\phi \circ f^n\right) \cdot \hpsi \cdot h\right) \ d m \right|
\\ & =
\left|\int_{X}\phi \cdot \sL_g^n\left(\hpsi \cdot h\right) \ d m \right|
\\ & \le
\|\phi\|_{\infty} \cdot \left\| \sL_g^n\left(\hpsi \cdot h  \right) \right\|_1
\\ & \le
\|\phi\|_{\infty} \cdot \left\|\sL_g^n\left(\hpsi \cdot h  \right)\right\|_{1/p,1}.    
  \end{split}
\end{equation*}
Noting that~$\hpsi \cdot h$ is in~$E_0$ and using part~$2$ of Theorem~\ref{t:quasicompactness}, we conclude that
\begin{equation}
\label{m:1}
C_n(\phi, \psi)
\le
M \|\phi\|_{\infty}\|\hpsi \cdot h\|_{1/p, 1} \rho^n.
\end{equation}
On the other hand, by Lemma~\ref{l:product in keller} we have
\begin{equation*}
  \begin{split}
\|\hpsi \cdot h  \|_{1/p, 1}
& \le
2 C_* \|\hpsi \|_{1/p, 1} \cdot \| h \|_{1/p, 1}
\\ & \le
\left( 2 C_* \| h \|_{1/p, 1} \right) \left( \| \psi \|_{1/p, 1} + \| \psi \|_1 \cdot \| h \|_{\infty} \right)
\\ & \le
\left( 2 C_* \| h \|_{1/p, 1} \left(1 + \| h \|_{\infty} \right) \right) \| \psi \|_{1/p,1}.    
  \end{split}
\end{equation*}
Together with~\eqref{m:1} this implies the desired inequality with $C
= 2 M C_*\|h\|_{1/p,1} \left( 1 + \| h \|_{\infty} \right)$.

\partn{3}
Let~$C_*$ be the constant given by part~$3$ of Proposition~\ref{p:keller spaces properties}.
Observe that for each~$\tau$ in~$\C$, we have
$$ | \exp(\tau \psi) |_{1/p, 1}
\le
\exp(|\tau| \cdot \| \psi \|_{\infty}) |\tau| \cdot | \psi |_{1/p, 1},
$$
so the function~$\exp(\tau \psi)$ is in~$\Hspace^{1/p, 1}(m)$.
Thus, by Lemma~\ref{l:product in keller} for every~$\chi$ in~$\Hspace^{1/p, 1}(m)$ we have
$$ \left\| \sL_{\tau} (\chi) \right\|_{1/p, 1}
\le
\left( 2 C_* \left\| \sL_{g} \right\|_{1/p, 1} \cdot \|\exp(\tau \psi) \|_{1/p, 1} \right) \| \chi \|_{1/p, 1}. $$
This proves that~$\sL_{\tau}$ maps~$\Hspace^{1/p, 1}(m)$ to itself and that~$\sL_{\tau}|_{\Hspace^{1/p, 1}(m)}$ is bounded.

To prove that~$\tau \mapsto \sL_{\tau}|_{\Hspace^{1/p, 1}(m)}$ is analytic in the sense of Kato, for each~$\varepsilon$ in~$\C$ let~$\eta_{\varepsilon} : \C \to \C$ be defined by~$\eta_{\varepsilon}(z) \= \frac{\exp(\varepsilon z) - 1}{\varepsilon} - z$ and put~$\psi_{\varepsilon} \= \eta_{\varepsilon} \circ \psi$.
Noting that~$D \eta_{\varepsilon}(z) = \exp(\varepsilon z) - 1$, we have
$$ |\psi_\varepsilon|
\le
\left( \exp ( |\varepsilon| \cdot \| \psi \|_{\infty}) - 1 \right) |\psi| $$
on~$X$, and
$$ | \psi_{\varepsilon}|_{1/p, 1}
\le
\left( \exp( |\varepsilon| \cdot \| \psi \|_{\infty}) - 1 \right) |
\psi |_{1/p, 1}. $$
It follows that
\begin{equation}
\label{e:perturbed potential}
\| \psi_{\varepsilon} \|_{1/p, 1}
\le
\left( \exp( |\varepsilon| \cdot \| \psi \|_{\infty}) - 1 \right) \| \psi \|_{1/p, 1}.
\end{equation}
On the other hand, if for each~$\tau$ in~$\C$ we define the operator~$\sD_\tau$ by~$\sD_\tau(\chi) \= \sL_{\tau}(\psi \cdot \chi)$, then for every~$\varepsilon$ in~$\C$ and every~$\chi$ in~$\Hspace^{1/p, 1}(m)$ we have
$$ \frac{\sL_{\tau + \varepsilon}(\chi) - \sL_{\tau}(\chi)}{\varepsilon} - \sD_{\tau}(\chi)
=
\sL_\tau (\psi_\varepsilon \cdot \chi). $$
Combined with~\eqref{e:perturbed potential}, we have by Lemma~\ref{l:product in keller}
\begin{align*}
  \left\| \frac{\sL_{\tau + \varepsilon}(\chi) - \sL_{\tau}(\chi)}{\varepsilon} - \sD_{\tau}(\chi) \right\|_{1/p, 1}
& \le
2 C_* \left\| \sL_{\tau} \right\|_{1/p, 1} \| \psi_{\varepsilon} \|_{1/p, 1} \| \chi \|_{1/p, 1}
\\ & \le
\left( 2 C_* \left\| \sL_{\tau} \right\|_{1/p, 1} \| \psi \|_{1/p, 1}
\right)
\\ & \quad \cdot
\left( \exp( |\varepsilon| \cdot \| \psi \|_{\infty} ) - 1 \right)  \| \chi \|_{1/p, 1}.
\end{align*}
This implies that the operator norm~$\left\| \frac{\sL_{\tau +
      \varepsilon} - \sL_{\tau}}{\varepsilon} - \sD_{\tau}
\right\|_{1/p, 1}$ converges to~$0$ as~$\varepsilon$ converges to~$0$,
and completes the proof that~$\tau \mapsto \sL_{\tau}|_{\Hspace^{1/p,
    1}(m)}$ is analytic in the sense of Kato.

That the spectral radius of~$\sL_{\tau}|_{\Hspace^{1/p,1}(m)}$ depends
on a real analytic way on~$\tau$ on a neighborhood of~$\tau = 0$ follows from part~$1$
and from the fact that~$\tau \mapsto \sL_{\tau}|_{\Hspace^{1/p, 1}(m)}$ is analytic in the sense of Kato, see for example~\cite[Theorem~XII.$8$]{ReeSim78}.
This completes the proof of the corollary.
\end{proof}

\section{Proof of Theorem~\ref{t:spectral gap}}
\label{s:proof of main theorem}
The purpose of this section is to prove Theorem~\ref{t:spectral gap}.
Throughout this section, fix an interval map~$f : I \to I$ in~$\sA$ that is topologically exact on its Julia set~$J(f)$. 
Then there is $N\ge 2$, and  a partition~$\sP'\=\{I'_1, \cdots, I'_N\}$ of~$I$ into intervals, such that~$f$ is continuous and strictly monotone on each~$I'_i$ in~$\sP'$.
For each $i$ in $\{1,\cdots,N\}$, put $I_i\= I'_i \cap J(f)$.

Let~$\alpha$ be in~$(0, 1]$, let~$\varphi:J(f)\to \R$ be a H{\"o}lder continuous potential of exponent~$\alpha$ that is hyperbolic for $f$, and let~$N \ge 1$ be an integer such that the function~$\tvarphi \= \frac{1}{N} S_N(\varphi)$ satisfies~$\sup_{J(f)} \tvarphi < P(f, \varphi)$.
By part~$1$ of Lemma~\ref{l:onetime}, the function~$\tvarphi$ is H{\"o}lder continuous of exponent~$\alpha$, the potentials $\varphi$ and~$\tvarphi$ share the same equilibrium states, and $P(f, \tvarphi) = P(f, \varphi)>\sup_{J(f)}\tvarphi$.
On the other hand, let~$\chi : J(f) \to \R$ be a H{\"o}lder continuous function, and note that~$\tchi \= \frac{1}{N} S_N(\chi)$ is H{\"o}lder continuous of the same exponent as~$\chi$ and that for every~$t$ in~$\R$ we have~$P(f, \tvarphi + t\tchi) = P(f, \varphi + t \chi)$.
Thus, replacing~$\varphi$ by~$\tvarphi$ and~$\chi$ by~$\tchi$ if necessary, throughout the rest of this section we can assume~$\sup_{J(f)} \varphi < P(f, \varphi)$.

Recall that the operator~$\cL_{\varphi}$ is defined by
$$ \cL_{\varphi}(\psi)(x)
\=
\sum_{y\in f^{-1}(x)} \exp\left(\varphi(y)\right) \psi(y), $$
see~\S\ref{ss:statements of results}, and that~$\hcL_{\varphi} \= \exp(-P(f,\varphi))\cL_\varphi$.
Note that if we put $g\=\exp(\varphi-P(f,\varphi))$, then~$\hcL_\varphi$ coincides with the operator~$\sL_g$ defined in~\eqref{e:otor} with~$T$ replaced by~$f$, and~$X$ replaced by $J(f)$.

The following lemma is well-known.
We include its short proof for completeness.
\begin{lemm}
\label{l:operator}
Let~$\mu$ be an atom-free $\exp(P(f,\varphi)-\varphi)$-conformal measure for~$f$.
Then for every function~$\psi$ in~$\Lspace^1(\mu)$, we have
$$ \int_{J(f)} \hcL_\varphi(\psi) \ d \mu
=
\int_{J(f)} \psi \ d\mu. $$
\end{lemm}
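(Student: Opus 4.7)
The plan is to unfold the definition of $\hcL_\varphi$ according to the partition $\{I_1,\ldots,I_N\}$ introduced at the beginning of the section, and then perform a change of variables on each piece using the conformal property of $\mu$. Since the identity is linear in $\psi$ and both sides extend continuously from $L^\infty(\mu)$ to $L^1(\mu)$ (once one has a uniform $L^1$ bound on $\hcL_\varphi$, which follows from the conformal identity applied to nonnegative functions), it suffices to treat bounded nonnegative $\psi$.

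The key computation will be the following. For each $i$ in $\{1,\ldots,N\}$, denote by $f_i \= f|_{I'_i}$, so that $f_i : I'_i \to f(I'_i)$ is a homeomorphism. The conformal property applied to Borel sets $A \subset I_i$ reads
\[
\mu(f(A)) = \int_A \exp(P(f,\varphi)-\varphi) \, d\mu,
\]
and a standard measure-theoretic argument (approximation by simple functions) upgrades this to
\[
\int_{f(I_i)} \phi \, d\mu
=
\int_{I_i} (\phi \circ f_i) \exp(P(f,\varphi)-\varphi) \, d\mu
\]
for every nonnegative measurable $\phi$ supported in $f(I_i)$. I would then write
\[
\int_{J(f)} \hcL_\varphi(\psi) \, d\mu
=
\sum_{i=1}^{N} \int_{f(I_i)} \left( \psi \cdot \exp(\varphi - P(f,\varphi)) \right) \circ f_i^{-1} \, d\mu,
\]
apply the change of variables displayed above to each summand (with $\phi_i \= (\psi \exp(\varphi-P(f,\varphi))) \circ f_i^{-1}$), and observe that the factors $\exp(\pm(\varphi - P(f,\varphi)))$ cancel, yielding $\sum_i \int_{I_i} \psi \, d\mu$.

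Finally, since the intervals $I'_1,\ldots,I'_N$ partition $I$ and only meet at finitely many boundary points, the sets $I_1,\ldots,I_N$ cover $J(f)$ and overlap on at most finitely many points, which are $\mu$-null because $\mu$ is atom-free. Hence $\sum_i \int_{I_i} \psi\, d\mu = \int_{J(f)} \psi\, d\mu$, which is the desired identity.

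The only mildly delicate point is the justification of the change of variables formula for arbitrary nonnegative Borel $\phi$ starting from the set-wise conformal identity; I would handle this by a routine monotone class / simple function approximation, noting that both sides are well-defined because $\exp(P(f,\varphi)-\varphi)$ is bounded. Everything else is bookkeeping, and the atom-free hypothesis is what makes the endpoint issue disappear cleanly.
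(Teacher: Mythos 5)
Your proposal is correct and follows essentially the same route as the paper's proof: unfold $\hcL_\varphi$ along the monotonicity partition $\{I_i\}$, use the conformal identity to change variables via each branch $f|_{I_i}$ (the paper phrases this as computing the Radon--Nikodym derivative of the pushforward $(f|_{I_i}^{-1})_*\mu|_{f(I_i)}$, which is the same content as your change-of-variables formula), and sum over $i$. The only minor superfluity is your worry about boundary overlaps: since $\sP'$ is a genuine (disjoint) partition of $I$, the $I_i$ are disjoint and already partition $J(f)$, so the atom-free hypothesis is not needed for that step.
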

\begin{proof}
Using that~$\mu$ is $\exp(P(f,\varphi)-\varphi)$-conformal, for every~$i$ in $\{1,2,\cdots,N\}$ and every measurable subset~$S$ of~$I_i$, we have
$$ \mu(f_{|_{I_{i}}}(S))
=
\mu(f(S))
=
\int_S \exp\left(P(f,\varphi) - \varphi\right) \ d\mu
=
\int_S \exp\left(P(f,\varphi) - \varphi\right) \ d\mu_{|_{I_i}}. $$
Hence, if we define $\nu_i \= \left( f_{|_{I_i}}^{-1} \right)_*\mu_{|_{f(I_i)}}$, then  $\frac{d\nu_i}{d \mu_{|_{I_i}}}= \exp(P(f,\varphi)-\varphi)$ on a subset of~$I_i$ of full measure with respect to~$\mu$.
It follows that for every~$\psi$ in~$\Lspace^1(\mu)$, we have
\begin{align*}
\int_{f(I_i)} \psi \circ f_{|_{I_i}}^{-1} \ d\mu_{|_{f(I_i)}}
=
\int_{I_i} \psi \ d\nu_i
=
\int_{I_i} \psi\exp\left(P(f,\varphi)-\varphi\right) \ d \mu.
\end{align*}
Replacing~$\psi$ by $\psi \exp(-P(f,\varphi)+\varphi)$ above, we obtain
\begin{equation}
\label{e:integralonsmall}
\exp\left(-P(f,\varphi)\right) \int_{f(I_i)}
\left(\psi\exp(\varphi)\right) \circ f_{|_{I_i}}^{-1} \ d\mu
=
\int_{I_i} \psi \ d\mu.
\end{equation}
It follows that
\begin{align*}
\int_{J(f)} \hcL_\varphi(\psi) \ d\mu
&=
\exp\left(-P(f,\varphi)\right) \sum_{i=1}^N
\int_{f(I_i)}\left(\psi\exp(\varphi)\right) \circ f_{|_{I_i}}^{-1} \ d\mu
\\ & =
\sum_{i=1}^N\int_{I_i} \psi \ d \mu
\\ & =
\int_{J(f)}\psi \ d\mu.
\end{align*}
The proof of the lemma is complete.
\end{proof}

To prove Theorem~\ref{t:spectral gap}, let~$\mu$ be an atom-free
$\exp(P(f,\varphi)-\varphi)$-conformal measure for~$f$ given by
Theorem~\ref{t:conformal measure}.
By~\eqref{e:1} the function~$\varphi$ is of bounded $(1/\alpha)$-variation.
Since~$\varphi$ is continuous and hence bounded, it follows that~$C\=\sup_{J(f)}\exp(\varphi)<+\infty$.
So for all~$x_1$ and~$x_2$ in~$[0, 1]$, we have
$$ |\exp(\varphi)(x_1)-\exp(\varphi)(x_2)|
\le
C|\varphi(x_1)-\varphi(x_2)|. $$
This implies that~$\exp(\varphi)$, and so~$g$, is of bounded $(1/\alpha)$-variation.
On the other hand, our hypothesis~$\sup_{J(f)}\varphi < P(f,\varphi)$ implies~$\sup_{J(f)} g < 1$.
Using that~$\mu$ is a $(g^{-1})$-conformal measure for~$f$, properties~H$1$ and~H$2$ in~\S\ref{ss:quasicompactness} hold with~$T$, $X$, and~$m$ replaced by~$f$, $J(f)$, and~$\mu$, respectively.
Moreover, Lemma~\ref{l:operator} implies property~H$3$ of~\S\ref{ss:quasicompactness} also holds. 
Therefore, for each~$\talpha$ in~$(0, \alpha]$ the assertions of Theorem~\ref{t:quasicompactness} and Corollary~\ref{c:quasicompactness} hold with~$p = 1 / \talpha$, and with~$m$, $T$, and~$X$ replaced by~$\mu$, $f$, and~$J(f)$, respectively.
Let~$A$ be the constant given by Theorem~\ref{t:quasicompactness} and consider the corresponding space~$\Hspace^{\talpha, 1}(\mu)$.

\begin{proof}[Proof of Theorem~\ref{t:spectral gap}]
By the considerations above, we only need to prove the uniqueness of the equilibrium state and the assertion about the analyticity of the pressure function.
For the former, note that for each equilibrium state~$\nu$ of~$f$ for the potential~$\varphi$, we have
$$ h_{\nu}(f)
=
P(f, \varphi) - \int \varphi \ d \nu
\ge
P(f, \varphi) - \sup_{J(f)} \varphi > 0. $$
Assuming~$\nu$ ergodic, by Ruelle's inequality we have $\chi_{\nu}(f) > 0$.
Then the uniqueness follows from the hypothesis that~$f$ is
topologically exact on~$J(f)$ and from~\cite[Theorem~$6$]{Dob1304}.

It remains to show that the function~$t \mapsto P(f, \varphi + t \chi)$ is real analytic on a neighborhood of~$t = 0$.
For each~$\tau$ in~$\C$, let~$\sL_\tau$ be the operator defined in
part~$3$ of Corollary~\ref{c:quasicompactness} with~$T$ replaced
by~$f$ and~$X$ by~$J(f)$.
On the other hand, for each~$t$ in~$\R$ put
$$ \varphi_t \= \varphi + t \chi
\text{ and }
g_t \= \exp(\varphi_t - P(f, \varphi_t)), $$
and note that the operator~$\exp(- P(f, \varphi_t)) \cL_t$ coincides with the operator~$\sL_g$ defined in~\eqref{e:otor} with~$g$ replaced by~$g_t$, $T$ replaced by~$f$, and~$X$ replaced by $J(f)$.
Let~$\talpha$ in~$(0, 1]$ be sufficiently small so that both~$\varphi$ and~$\chi$ are H{\"o}lder continuous of exponent~$\talpha$, let~$p_1$ and $p_2$ in $(\sup_{J(f)}\varphi, P(f, \varphi))$ be such that $p_1<p_2$, and let~$\eps_0>0$ be small enough so that
$$ \eps_0\sup_{J(f)}|\chi|
<
\min \left\{ p_2 - p_1, P(f,\varphi) - p_2 \right\}. $$
Note that by our choice of~$\eps_0$, for every~$t$ in~$(-\eps_0, \eps_0)$ we have on~$J(f)$ that
$$ \varphi_t
>
\varphi - (P(f,\varphi)-p_2) \1_{J(f)}, $$
so
\begin{multline}
\label{e:being supra pressure}
P(f, \varphi_t) \ge
P(f, \varphi - (P(f,\varphi) - p_2) \1_{J(f)})
\\=
P(f, \varphi) - (P(f,\varphi) - p_2)
=
p_2
>
\sup_{J(f)}\varphi + \sup_{J(f)}t\chi
\ge
\sup_{J(f)}\varphi_t.
\end{multline}
Thus $\sup_{J(f)} g_t < 1$ and by Theorem~\ref{t:conformal measure} there is an atom-free $\exp(P(f,\varphi_t)-\varphi_t)$-conformal measure~$\mu_t$ for~$f$.
Since the function~$g_t$ is of bounded~$(1/\talpha)$-variation, it follows that properties~H$1$ and~H$2$ in~\S\ref{ss:quasicompactness} hold with~$p = 1/\talpha$, and with~$g$, $T$, $X$, and~$m$ replaced by~$g_t$, $f$, $J(f)$, and~$\mu_t$, respectively.
Moreover, Lemma~\ref{l:operator} implies that property~H$3$ of~\S\ref{ss:quasicompactness} also holds. 
We can thus apply part~$1$ of Corollary~\ref{c:quasicompactness} to
conclude that~$\exp(P(f, \varphi_t))$ is equal to the spectral radius
of~$\sL_t$.
Moreover, by part~$3$ of the same corollary, the function~$t \mapsto
\exp(P(f, \varphi_t))$ is real analytic on~$(-\varepsilon_0, \varepsilon_0)$.
The proof of Theorem~\ref{t:spectral gap} is thus complete.
\end{proof}

\appendix
\section{Hyperbolic potentials and the bounded range condition}
\label{s:big oscillation}
Let~$X$ be a compact metric space and~$T : X \to X$ be a continuous
map.
Denote by~$\htop(T)$ the \emph{topological entropy of~$T$}.
Recall that~$2$ continuous functions~$\varphi : X \to \R$ and~$\tvarphi : X \to \R$ are \emph{co-homologous}, if there is a continuous function~$\chi : X \to \R$ such that
$$ \tvarphi = \varphi + \chi - \chi \circ T. $$

It is easy to see that every continuous potential~$\varphi : X \to \R$ satisfying the \emph{bounded range} condition:
\begin{equation}
\label{e:bounded range}
\sup_X \varphi - \inf_X \varphi < \htop(T),
\end{equation}
also satisfies
\begin{equation}
  \label{e:first step hyperbolic}
\sup_X \varphi < P(T, \varphi),
\end{equation}
and it is therefore hyperbolic for~$T$.

The purpose of this section is to show that, under a fairly general condition on~$T$, there is a potential~$\varphi$ satisfying~\eqref{e:first step hyperbolic} that is not co-homologous to any potential~$\tvarphi$ satisfying~\eqref{e:bounded range} with~$\varphi$ replaced by~$\tvarphi$.
When~$f$ is a map in~$\sA$ that is topologically exact on~$J(f)$, this general condition is easily seen to be satisfied when~$T = f|_{J(f)}$.

\begin{lemm}
\label{l:big oscillation}
Let~$X$ be a compact metric space, let~$T : X \to X$ be a continuous map, and let~$h > 0$ be given.
Suppose there are disjoint compact subsets~$X'$ and~$X''$ of~$X$ that are forward invariant by~$T$, and such that~$\htop(T|_{X'}) > 0$.
Let $\varphi : X \to (- \infty, 0]$ be a continuous function that is constant equal to~$0$ on~$X'$, and such that $\sup_{X''} \varphi < - h$.
Then
$$ \sup_X \varphi < P(T, \varphi), $$
and for every continuous function~$\tvarphi : X \to \R$ that is co\nobreakdash-homologous to~$\varphi$, we have
\begin{equation}
\label{e:big oscillation}
\sup_X \tvarphi - \inf_X \tvarphi > h.
\end{equation}
\end{lemm}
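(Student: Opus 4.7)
The plan is to handle the two conclusions separately, and both reduce to computing integrals of $\varphi$ (and $\tvarphi$) against invariant probability measures supported on $X'$ and $X''$.

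For the first conclusion, note that since $\varphi\le 0$ on~$X$ and $\varphi\equiv 0$ on~$X'$, we have $\sup_X\varphi=0$. By the variational principle applied to any $T|_{X'}$-invariant probability measure~$\mu'$ (which can be viewed as a $T$-invariant measure on~$X$ thanks to forward invariance of~$X'$),
$$ P(T,\varphi)\ \ge\ h_{\mu'}(T)+\int_X\varphi\,d\mu'\ =\ h_{\mu'}(T|_{X'}). $$
Taking the supremum over such $\mu'$ gives $P(T,\varphi)\ge \htop(T|_{X'})>0=\sup_X\varphi$, which is the first conclusion.

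For the second conclusion, suppose $\tvarphi=\varphi+\chi-\chi\circ T$ for some continuous $\chi:X\to\R$. By Krylov--Bogolyubov applied to the continuous maps $T|_{X'}$ and $T|_{X''}$ on the compact sets $X'$ and $X''$, there exist invariant probability measures $\mu'$ and $\mu''$ supported on $X'$ and $X''$ respectively; each extends to a $T$-invariant probability on~$X$. The key observation is that for any $T$-invariant probability $\mu$, $\int(\chi-\chi\circ T)\,d\mu=0$, so $\int\tvarphi\,d\mu=\int\varphi\,d\mu$. Applying this with $\mu=\mu'$ and $\mu=\mu''$ yields
$$ \int_X\tvarphi\,d\mu'=\int_X\varphi\,d\mu'=0,\qquad \int_X\tvarphi\,d\mu''=\int_X\varphi\,d\mu''\le \sup_{X''}\varphi<-h. $$
Since $\sup_X\tvarphi\ge\int\tvarphi\,d\mu'$ and $\inf_X\tvarphi\le\int\tvarphi\,d\mu''$, subtracting gives $\sup_X\tvarphi-\inf_X\tvarphi>h$, which is~\eqref{e:big oscillation}.

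There is no serious obstacle: the argument uses only that co-homologous potentials have the same integrals against invariant measures, together with the existence of invariant measures on the forward-invariant compacta~$X'$ and~$X''$. The positivity of $\htop(T|_{X'})$ is needed only in the first conclusion, while the disjointness of~$X'$ and~$X''$ is what makes the two integrals $\int\tvarphi\,d\mu'$ and $\int\tvarphi\,d\mu''$ spread apart by more than~$h$.
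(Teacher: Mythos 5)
Your proof is correct and follows essentially the same route as the paper's: pick invariant probability measures supported on $X'$ and $X''$, use the variational principle to get $P(T,\varphi)\ge \htop(T|_{X'})>0=\sup_X\varphi$, and use the fact that co-homologous potentials have equal integrals against invariant measures to separate $\sup_X\tvarphi$ and $\inf_X\tvarphi$ by more than $h$. The only cosmetic difference is that the paper fixes a single measure $\nu'$ on $X'$ with positive entropy and reuses it in both halves, whereas you take a supremum in the first half and invoke Krylov--Bogolyubov for a possibly different measure in the second.
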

\begin{proof}
By the variational principle there is a probability measure~$\nu'$ on~$X$ that is supported on~$X'$, that is invariant by~$T$, and such that~$h_{\nu'}(T) = h_{\nu'} \left( T|_{X'} \right) > 0$.
Then we have
$$ P(T, \varphi)
\ge
h_{\nu'}(T) + \int \varphi \ d \nu'
=
h_{\nu'}(T)
>
0
= \sup_X \varphi. $$
On the other hand, if~$\tvarphi : X \to \R$ is a continuous function that is co-homologous to~$\varphi$, then we have
$$ \sup_X \tvarphi
\ge
\int \tvarphi \ d \nu'
=
\int \varphi \ d \nu'
=
0. $$
Moreover, if~$\nu''$ is a probability measure on~$X''$ that is invariant by~$T$, then
\begin{equation*}
\inf_X \tvarphi
\le
\int \tvarphi \ d \nu''
=
\int \varphi \ d \nu''
=
\int_{X''} \varphi \ d \nu''
\\ \le
\sup_{X''} \varphi
< - h.
\end{equation*}
Together with the inequality~$\sup_X \tvarphi \ge 0$ shown above, this implies~\eqref{e:big oscillation}.
\end{proof}

\bibliographystyle{alpha}

\end{document}